%% file: ex_article.tex
\documentclass[hidelinks,onefignum,onetabnum]{siamart251216}

\input{ex_shared}

\ifpdf
\hypersetup{
  pdftitle={Faster Stochastic ADMM for Nonsmooth Composite Convex Optimization in Hilbert Spaces},
  pdfauthor={Weihua Deng, Haiming Song, Hao Wang, and Jinda Yang}
}
\fi

\newsiamthm{assumption}{Assumption}

\usepackage{graphicx,epstopdf} % <- Preamble
\usepackage[caption=false]{subfig}

\begin{document}

\maketitle

% REQUIRED
\begin{abstract}
	In this paper, a stochastic alternating direction method of multipliers (ADMM) is proposed for a class of nonsmooth composite and stochastic convex optimization problems in Hilbert spaces, motivated by optimization problems constrained by partial differential equation (PDE) with random coefficients. We prove the strong convergence of the proposed ADMM algorithm in the strongly convex case, and show the faster nonergodic convergence rates in terms of functional values and feasibility violation for both strongly convex and general convex cases.  We demonstrate  the application of the proposed method to solve certain model problems, along with its associated probability bound of large deviation. Some preliminary numerical results illustrate the efficiency of our method. 
\end{abstract}

% REQUIRED
\begin{keywords}
alternating direction method of multipliers, stochastic optimization, nonsmooth convex optimization, PDE-constrained optimization under uncertainty, probability bound
\end{keywords}

% REQUIRED
\begin{MSCcodes}
90C15, 90C25, 65K05, 49M37
\end{MSCcodes}

\section{Introduction}
\label{sec: introduction}

We consider the following composite convex optimization problem: 
\vspace{-15pt}
\begin{equation}\label{eq: general model problem}
	\vspace{-8pt}
	\underset{u\in U_{ad}}{\min}~  f(u) + g(u),
\end{equation}
where the admissible set $U_{ad}$ is a nonempty closed, and convex subset of a Hilbert space $U$. The functional $f$ is of the form 
\vspace{-10pt}
\begin{equation}\label{eq: stochastic form of $f$}
	\vspace{-8pt}
	f(u) = \mathbb{E}\left[F(u,\xi)\right] = 
	\int_{\Omega} F(u,\xi(\omega)) ~\mathrm{d}\mathbb{P}(\omega),
\end{equation}
in which the vector-valued random variable $\xi:\Omega\rightarrow \Xi \subset \mathbb{R}^{n}$ is defined on a probability space $(\Omega,\mathcal{F},\mathbb{P})$, and the expectation is well defined and finite-valued for every $u\in U$. For every $\xi \in \Xi$, the functional $F(\cdot,\xi)$ is convex and Fr\'{e}chet differentiable on $U_{ad}$, then it follows that $f$ is convex and  Fr\'{e}chet differentiable. The function $g$ is proper, lower semicontinuous, convex but generally nonsmooth. 

Problem $(\ref{eq: general model problem})$ arises in many applications involving PDE-constrained optimization under uncertainty, where the functional $f$ is a measure of fit depending implicitly on random PDEs with some prescribed or observed data, and the functional $g$ is often used to impose regularization and penalty that enforce structure to the solution. We refer to, e.g., \cite{biccari2023two,cao2022adaptive,geiersbach2019projected,heinkenschloss2025optimization,martin2019stochastic} for a few references. For instance, a commonly encountered functional $f$ in optimal control is the following tracking-type functional with smooth Tikhonov regularization
\vspace{-5pt}
\begin{equation*}\label{eq: example of f}
	f(u) = \mathbb{E}\left[\frac{1}{2}\left\| y(u,\xi(\omega)) - y_{d}\right\|^{2} \right] + \frac{\alpha}{2}\left\| u \right\|^{2},
	\vspace{-8pt}
\end{equation*}
for a given desired state $y_{d}$ and nonnegative $\alpha$, where the norm $\|\cdot\|$ is induced by the inner product $\langle\cdot,\cdot\rangle$ in $U$, and the parametrized control-to-state mapping $ u\longmapsto y(u,\xi(\omega))$ is well-defined by the well-posedness of the corresponding PDE under uncertainty. 

While problem (\ref{eq: general model problem}) appears to be deterministic in form, the functional value $f(u)$ is a statistical quantity. Compared to deterministic optimization, the challenge in solving problem (\ref{eq: general model problem}) is that it is usually impossible or very expensive to accurately evaluate the functional $f$ and its gradient by computing the expectation.

\subsection{Motivation}
%\subsection{ADMM formulation and motivation}
\label{subsec: motivation}
A standard approach for solving problem (\ref{eq: general model problem}) is the stochastic approximation (SA) \cite{nemirovski2009robust,robbins1951stochastic} via subsequent calls to a stochastic first-order oracle (SFO). Precisely,  given an input $u \in U$ and an independent identically distributed (i.i.d) random variable $\xi$ (also independent of the input $u$), the SFO outputs an unbiased stochastic gradient $\nabla F(u,\xi)$, and at the $k$-th iteration, the classic SA iteration takes the update
\vspace{-4pt}
\begin{equation}\label{eq: classic SA}
	u_{k+1} = \underset{u\in U_{ad}}{\arg\min}
	\left\lbrace 
	\left\langle \nabla F(u_{k},\xi_{k}), u - u_{k} \right\rangle + \frac{1}{2\eta_{k}}\|u-u_k\|^{2} + g(u)
	\right\rbrace,
	\vspace{-4pt}
\end{equation}
where the parameter $\eta_{k}$ is a positive stepsize and the realization $\xi_{k}\in \Xi$ is generated by sampling from $\mathbb{P}$. In the unconstrained case $U_{ad} = U$, with the definition of proximal operator, the SA iteration (\ref{eq: classic SA}) can be understood as applying the proximal operator associated with the nonsmooth $g$ to the stochastic gradient descent step for the smooth $f$, which is a basic version of the stochastic proximal gradient methods \cite{atchade2017perturbed,jofre2019variance,juditsky2012first,necoara2025efficiency,rosasco2020convergence,xiao2014proximal}. Among the most important first-order methods, stochastic proximal gradient (SPG) method does not require the subdifferentiability of the nonsmooth functional $g$ and relies only on the computation of the associated proximal operator. In the constrained case $U_{ad} \subset U$, the proximal evaluation of the nonsmooth $g$ subject to the constraint $u\in U_{ad}$ is generally difficult to compute, and due to this challenge, the stochastic subgradient (SSG) method is widely applied \cite{davis2020stochastic,duchi2011adaptive,necoara2021minibatch,nedic2014stochastic,nemirovski2009robust,ram2009incremental,zhang2025stochastic}. Assuming that a subgradient $g^{\prime}(u_{k})$ of $g$ at $u_{k}$ can be obtained, a basic iteration of stochastic subgradient method can be written as
\vspace{-3pt}
\begin{equation}\label{eq: basic SSG}
	u_{k+1} = \underset{u\in U_{ad}}{\arg\min}
	\left\lbrace 
	\left\langle \nabla F(u_{k},\xi_{k}) + g^{\prime}(u_{k}), u - u_{k} \right\rangle + \frac{1}{2\eta_{k}}\|u-u_k\|^{2}
	\right\rbrace.
	\vspace{-3pt}
\end{equation}  
Clearly, it works on the premise that the admissible set $U_{ad}$ has a structure conducive to the efficient computation of $u_{k+1}$, such as when a closed form of $u_{k+1}$ is available. The convergence results of the variants of the SA approach to problem (\ref{eq: general model problem}) are extensively studied under some regularity assumptions often made on the functional $f$ and $g$ (see \cite{ghadimi2012optimal,ghadimi2013optimal,grimmer2019convergence,lan2012optimal,necoara2021general,nedic2014stochastic,rosasco2020convergence,shamir2013stochastic} and the references
therein). In this context, sublinear (strong) convergence is derived for SPG and SSG in the (strongly) convex case. For instance (see, e.g., \cite{ghadimi2012optimal,lan2012optimal,nedic2014stochastic,rosasco2020convergence}), for problems where the functional $f$ has Lipschitz continuous gradients and the stochastic gradients $\nabla F(x,\xi)$ have bounded magnitude, both SPG and SSG can achieve the ergodic rate of $O(1/\sqrt{K})$ in convex case and $O(1/K)$ in strongly convex case by using subsequent averaging of the obtained iterates, where longer stepsizes are suggested but often chosen in relation to the Lipschitz constant or gradient bound. In practice, proximal gradient iteration is typically faster than subgradient iteration \cite{nesterov2009primal,nesterov2014convergent,parikh2014proximal}, while computing the proximal mapping could be much more expensive than calculating a subgradient with additional constraints. 

The aforementioned concerns have motivated us to shift our focus to the alternating direction method of multipliers (ADMM). As a popular Lagrangian-based approach widely used in modern applications \cite{boyd2011distributed,glowinski2014alternating,glowinski2017splitting}, the ADMM often exhibits faster convergence than the traditional primal-dual type methods \cite{bertsekas2016nonlinear,luenberger2016linear} and the
method of multipliers \cite{bertsekas2014constrained}, and specifically, has been advocated for solving the problem with PDE-constraint in the deterministic form of (\ref{eq: general model problem}) \cite{glowinski2020admm,glowinski2022application,song2018two,song2024admm}. By introducing an auxiliary variable $z\in U$, we can reformulate the problem (\ref{eq: general model problem}) as the following equality constrained problem:
\vspace{-2pt}
\begin{equation}\label{eq: reformulated model problem}
	\underset{u\in U_{ad}}{\min}~  f(u) + g(z), 
	\enspace \text{s.~t.}\enspace u=z.
\vspace{-4pt}
\end{equation}
For the above constrained problem, the augmented Lagrangian functional $\mathcal{L}_{\rho}$ is given by
\vspace{-2pt}
\begin{equation}\label{eq: Lagrangian functional}
	\mathcal{L}_{\rho}(u,z,\lambda) = 
	f(u) + g(z) -\langle \lambda, u-z\rangle + \frac{\rho}{2}\|u-z\|^{2},
	\vspace{-2pt}
\end{equation}
where $\lambda \in U (= U^{\ast})$ is the Lagrange multiplier associated to the constraint $u=z$. Following \cite{glowinski2020admm,glowinski2022application}, a classic ADMM iteration can be unified as
\begin{subequations}\label{eq: classic ADMM}
	\begin{align}
		u_{k+1} 
		&= 
		\underset{u\in  U_{ad}}{\arg\min}~ \mathcal{L}_{\rho}(u,z_{k},\lambda_{k}),
		\label{eq: classic u-subproblem}\\
		z_{k+1} 
		&= 
		\underset{z\in  U}{\arg\min}~ \mathcal{L}_{\rho}(u_{k+1},z,\lambda_{k}),
		\label{eq: classic z-subproblem}\\
		\lambda_{k+1} 
		&= \lambda_{k} -\rho(u_{k+1} - z_{k+1}).
		\label{eq: classic lambda-subproblem}
	\end{align}
\end{subequations}
Utilizing the separable structure of (\ref{eq: reformulated model problem}), the ADMM iteration (\ref{eq: classic ADMM}) alternately executes minimization steps for the primal variable $u$ and $z$, and then updates the dual variable $\lambda$ by a ascent step. An important feature of the ADMM iteration (\ref{eq: classic ADMM}) is that it decouples the smooth $f$ and nonsmooth $g$, thus allowing us to treated each component individually in the iterations. The strong convergence and the worst-case $O(1/K)$ convergence rate have been established in \cite{glowinski2020admm,glowinski2022application} for the application of ADMM to different PDE-constrained problems. Since the functional $f$ is usually complicated because of the inherent PDE-constraint, several nested internal iterations and criteria have been proposed to solve the $u$-subproblem (\ref{eq: classic u-subproblem}) and the efficiency has been numerically validated.

\subsection{Algorithmic framework}
\label{subsec: algorithmic framework}
In order to leverage the structure of (\ref{eq: general model problem}) and take advantage of ADMM, we present a stochastic linearized framework of ADMM by incorporating the SA approach, which is given in Algorithm~\ref{alg:Stochastic Linearized ADMM}.

\begin{algorithm}
\caption{Faster Stochastic ADMM for (\ref{eq: general model problem})}
\label{alg:Stochastic Linearized ADMM}
\begin{algorithmic}
\STATE{{\bf Initialization:} Choose initial points $(u_{0},z_{0})=(v_{0},s_{0}) \in U\times U$, and set the parameter $\mu\in (0,1)$ }
\FOR{$k = 0,1,2,\cdots$}
\STATE{Choose the parameters $\rho_{k},~\eta_{k},~m_{k}>1$ and $\theta_{k}\geq 1$.}
\STATE{Generate independent realizations $\xi_{k,1},\cdots,\xi_{k,m_{k}} \in \Xi_{k}$, and compute the average
            \vspace{-7pt}
			\begin{equation}\label{eq: average SG}
				G_{k} = \frac{1}{m_{k}}\sum_{i=1}^{m_{k}}\nabla F(v_{k},\xi_{k,i}).
				\vspace{-16pt}
			\end{equation}}
\STATE{Perform updates:
\vspace{-10pt}
			\begin{subequations}
				\begin{align}
					s_{k+1} 
					&= 
					\underset{z\in  U}{\arg\min}~ \left\lbrace 
					g(z) + \left\langle \lambda_{k}, z\right\rangle +\frac{\rho_{k}}{2}\|z-v_{k}\|^{2} \right\rbrace,
					\label{eq: z-subproblem}\\
					v_{k+1} 
					&= 
					\underset{u\in  U_{ad}}{\arg\min}~ \left\lbrace 
					\left\langle G_{k}-\lambda_{k}, u\right\rangle +\frac{\rho_{k}}{2}\|u-s_{k+1}\|^{2} +\frac{\eta_{k}}{2}\|u-v_{k}\|^{2} \right\rbrace,
					\label{eq: u-subproblem}\\
					\psi_{k+1} 
					&= \psi_{k} -\mu\rho_{k}(v_{k+1} - s_{k+1}),
					\label{eq: lambda-subproblem}\\
					u_{k+1} 
					&= (1-\theta_{k}^{-1})u_{k} + \theta_{k}^{-1}v_{k+1},
					\label{eq: u-update}  \\
					z_{k+1} 
					&= (1-\theta_{k}^{-1})z_{k} + \theta_{k}^{-1}s_{k+1},
					\label{eq: z-update}\\
					\lambda_{k+1} 
					&= \psi_{k+1} -\mu\rho_{k}\theta_{k}(u_{k+1} - z_{k+1}).
					\label{eq: lambda-update}
				\end{align}	
			\end{subequations}}
			\vspace{-17pt}
\ENDFOR
\end{algorithmic}
\end{algorithm}

%We now make a few comments about the above algorithm.
Instead of solving the $u$-subproblem (\ref{eq: classic u-subproblem}) inexactly with inner-layer iterations, we let $G_{k}$ be a stochastic approximation to $\nabla f(v_{k})$ by independently calling SFO $m_{k}$ times, and then apply a stochastic linearization to (\ref{eq: classic u-subproblem}) in (\ref{eq: u-subproblem}) at the current iterate $v_{k}$, which makes an important modification to the update in (\ref{eq: classic u-subproblem}) adhering to the SA approach as in (\ref{eq: classic SA}). In view of this, the update (\ref{eq: u-subproblem}) can be carried out by projecting onto the admissible set $U_{ad}$. Hence, the decoupled subproblems (\ref{eq: z-subproblem}) and (\ref{eq: u-subproblem}) can be easily solved for the simple function $g$ and the favorably structured set $U_{ad}$.

In general, a larger batch size $m_{k}$ incurs more computation of $G_{k}$ and leads to a lower stochastic variance, which plays a key role in accelerating the convergence of the algorithm. Drawing on the well-known optimal first-order method given by Nesterov \cite{nesterov1983method} for minimizing deterministic smooth convex optimization problems with Lipschitz gradients and its variants \cite{beck2009fast,tseng2008accelerated}, the parameter $\theta_{k}$ is also crucial for accelerating the iterations in the nonergodic sense. To this end, the augmented parameter $\rho_{k}$ and the proximal parameter $\eta_{k}$ are adaptively chosen based on $\theta_{k}$.

\subsection{Related works}
\label{subsec: related works}
Stochastic ADMM has received significant attention and been developed for the large-scale and stochastic optimizations in the field of data science \cite{han2022survey,lin2022alternating}. In the prior works of \cite{ouyang2013stochastic,suzuki2013dual}, the smooth $f$ has been linearized by using the stochastic gradient computed at $x_{k}$ with a single SFO. It was shown that such methods can converge at the ergodic rate $O(1/\sqrt{K})$ for the objective value \cite{suzuki2013dual} or for the combination of the objective value and feasibility violation \cite{ouyang2013stochastic} in convex case with bounded stochastic gradients. When the functional $f$ is further required to be strongly convex, the ergodic convergence rate can be $O(\log K/K)$. With the bounded stochastic gradients and dual iterates, the work of \cite{azadi2014towards} has provided a weighted average of all the iterates and sharpened the ergodic rate to $O(1/K)$ for the strongly convex case. Under the conditions of the Lipschitz smooth functional $f$ and the bounded stochastic gradient variance, the $O(1/K)$ contribution from the smooth $f$ in \cite{ouyang2013stochastic,suzuki2013dual} can be refined to $O(1/K^{2})$ in \cite{azadi2014towards}, but the worst convergence rate is still $O(1/\sqrt{K})$. Due to the noise in the stochastic process, the aforementioned stochastic ADMM methods suffer from slower convergence rates compared to the performance of ADMM for deterministic cases. Later on, the success of variance reduction in stochastic gradient methods greatly propelled the research on stochastic ADMM for the
the empirical risk minimization problem, in which the functional $f$ is replaced by the sample average approximation (SAA) 
\vspace{-7pt}
\begin{equation}\label{eq: SAA functional}
	\bar{f} = \frac{1}{N}\sum_{i=1}^{N} F(u,\xi_{i})
	\vspace{-7pt}
\end{equation} 
based on a limited number of sampled realizations $\{\xi_{i}\}_{i=1}^{N}$ with $\xi_{i} = \xi(\theta_{i}), \theta_{i}\in \Omega$. By using in a specific manner the previously estimated gradients or the periodically computed full gradient, the stochastic ADMM can recover the deterministic convergence rates. For example, when the functional $f$ is Lipschitz smooth, the $O(1/K)$ ergodic rate and the linear ergodic rate are provably guaranteed, respectively, by equipping with stochastic average gradient (SAG) in convex case \cite{zhong2014fast} and with stochastic variance reduced gradient (SVRG) in strongly convex case \cite{zheng2016fast}, both of which use meticulous step sizes that rely on the Lipschitz constant.

In spite of its wide applicability in finite-dimensional Hilbert spaces, the study on stochastic ADMM is still unexploited for composite problems modeled by PDE-constrained optimization under uncertainty. To our best knowledge, most applications of ADMM in PDE-constrained optimization are limited to deterministic problems. For the linear parabolic state constrained optimal control problem, under the condition that $f$ is Lipschitz smooth and strongly convex, the work of \cite{glowinski2020admm} has established the strong convergence and the $O(1/K)$ worst-case convergence rate in the ergodic sense for the sum of the objective value and feasibility, as well as for the primal and dual residuals in the nonergodic sense. In \cite{glowinski2022application}, an inexact ADMM has been designed with a nested internal iteration and a self-acting criterion for solving the control constrained parabolic optimal control problem. At each iteration, the $u$-subproblem (\ref{eq: classic u-subproblem}) has been solved iteratively and inexactly, which can reduce the cost stemming from the immanent PDE-constraint. In accordance with the conditions in \cite{glowinski2020admm}, it shows that the inexact ADMM also enjoys the $O(1/K)$ convergence in both the ergodic and nonergodic senses for the violation of first-order optimality and for the residuals of $z$ and $\lambda$, respectively. 

We would like to mention that several notable works on stochastic gradient (SG) methods have been developed for PDE-constrained optimization under uncertainty. The convergence of SG with bias has been analyzed in \cite{geiersbach2019projected} for the smooth convex case, and in \cite{geiersbach2023stochastic} for the smooth nonconvex case, both governed by random elliptic equations. Besides, with unbiased gradient estimate, the work of \cite{geiersbach2019projected} shows that SG can achieve the $O(1/K)$ nonergodic convergence rate in the strongly convex case and the $O(1/\sqrt{K})$ ergodic convergence rate in the convex case in terms of the objective value. An extension of SPG to the nonsmooth nonconvex case has been provided in \cite{geiersbach2021stochastic} with asymptotic convergence results for semilinear elliptic constraints. By integrating mini-batch or SAGA gradient estimators with tailored spatial discretization, it is shown in \cite{martin2021pde} that SG performs better in the strongly convex case for the Lipschitz-smooth objective functional constrained by random elliptic equations. In addition, an adaptive SG iteration has been proposed in \cite{cao2022adaptive}, scaling step sizes by the cumulative sum of gradients sampled to date for parabolic optimal control problems with random coefficients.

\subsection{Our goals}
\label{subsec: our goals}
Our first purpose is to study the convergence of the stochastic ADMM algorithmic framework applied to the problem (\ref{eq: general model problem}) in its reformulated form (\ref{eq: reformulated model problem}). Specifically, we focus on the convergence analysis in the strongly convex setting of the smooth functional $f$, proving the global strong convergence and deriving faster nonergodic rates in terms of both functional values and feasibility violation. Additionally, we extend the nonergodic rate analysis to the general convex setting. It should be mentioned that, in practice, the outputs of ADMM-type methods are  nonergodic results, while most existing analyses use ergodic convergence rates as the criterion. In fact, the ergodic averaging may destroy the structural properties of the iterates, such as sparsity or low-rankness, and the nonergodic iterates often perform much better.

The literature on nonergodic rate of convergence results still stays limited, especially regarding the aforementioned measures for stochastic problems. A relevant work is the sophisticated two-layer iteration in \cite{fang2017faster}, which integrates Nesterov extrapolation and SVRG in the inner iteration, and elaborately preserves snapshots of primal variables in the outer iteration. Compared with the work of \cite{fang2017faster}, our framework is markedly more simplified, requiring only mild restrictions on the parameters to achieve faster and nonergodic convergence results. Moreover, the implementation of our method does not require periodic full-gradient computations, since it may be impossible or intractable to precisely calculate the expectation \eqref{eq: stochastic form of $f$} necessary for accessing the full gradient with the PDE-constraints.

Another purpose of this work is to explore our framework for the application to PDE-constrained optimization under uncertainty. In addition to offering the upper bounds on the functional values and feasibility violation of the iterates, we also provide the efficiency validation with the bounds on large deviations for solving related problems. 
%for solving the PDE-constrained optimization problems under uncertainty
%the optimal control problems constrained by elliptic or parabolic PDEs with random coefficients. 
To the best of our knowledge, no such large-deviation results have been obtained before for stochastic ADMM and for solving PDE-constrained optimization problems under uncertainty, although some results exist for solving nonsmooth composite convex problems solved by SG methods \cite{ghadimi2012optimal,lan2012optimal}.

\subsection{Organization}
The paper is organized as follows. In Section~\ref{sec: convergence analysis}, the convergence results of the stochastic ADMM framework are presented after some preliminaries. In Section~\ref{sec: application to PDE-constrained optimization under uncertainty}, we consider the model problems of PDE-constrained optimization under uncertainty and derive the large-deviation properties of the framework. The efficiency of our framework is also demonstrated by some numerical results in Section~\ref{sec: application to PDE-constrained optimization under uncertainty}. Then some conclusions are given in Section~\ref{sec:conclusions}.

\section{Convergence analysis}
\label{sec: convergence analysis}
In this section, we analyze the convergence of Algorithm~\ref{alg:Stochastic Linearized ADMM} under different settings. 

\subsection{Preliminaries}
\label{subsec: preliminaries}
Before proceeding with our analysis, we will introduce some notations and some basic facts on convex analysis and stochastic processes. Some assumptions and useful results will also be presented in this subsection.

Given a probability space $(\Omega,\mathcal{F},\mathbb{P})$, we let $\Xi_{k} = \{\xi_{k,1},\ldots,\xi_{k,m_{k}} \}$ denote the set of independent realizations at the $k$-th iteration. We use the $\sigma$-algebras $\left\lbrace \mathcal{F}_{k} \right\rbrace $ for the natural filtration with $\mathcal{F}_{k} = \sigma \left(\Xi_{0},\ldots,\Xi_{k-1} \right)$, and $\mathbb{E}\left[ \phi|\mathcal{F}_{k}\right]$ for the expectation of $\phi$ conditional on $\mathcal{F}_{k}$ for any random variable $\phi$ on $(\Omega,\mathcal{F},\mathbb{P})$.
% $\Xi_{[k]} = \left(\Xi_{1},\cdots,\Xi_{k} \right)$ as the history

From the convexity of the problem (\ref{eq: general model problem}), the existence of the optimal solution $(u^{\ast},z^{\ast})$ to problem (\ref{eq: reformulated model problem}) and an optimal dual Lagrange multiplier $\lambda^{\ast}$ associated with the constraint $u=z$ is guaranteed under the standard constraint qualification for problem (\ref{eq: reformulated model problem}) (see, e.g., \cite{bauschke2020convex,rockafellar1970convex}). With the KKT conditions of problem (\ref{eq: reformulated model problem}),  the optimality of the saddle point $(u^{\ast},z^{\ast},\lambda^{\ast})$ can be characterized by the following variational inequality
\vspace{-2pt}
\begin{equation}\label{eq: VI of reformulated MP}
	g(z) - g(z^{\ast}) + \left\langle \nabla f(u^{\ast}), u-u^{\ast}  \right\rangle
	-  \left\langle \lambda^{\ast}, u-z  \right\rangle \geq 0, 
	\enspace  \forall~u,z,\lambda \in U.
	\vspace{-2pt}
\end{equation}

Throughout this paper,  the following assumption will be made for our analysis.
\begin{assumption}\label{assumption: strongly convex and Lipschitz continuous}
	The functional $f$ is $\alpha$-strongly convex and has Lipschitz continuous gradient with the constant $L$, i.e., 
	\vspace{-5pt}
	\begin{subequations}
		\begin{align}
			&f(u) + \left\langle \nabla f(u), v-u \right\rangle + \frac{\alpha}{2}\|v-u\|^{2} 
			\leq f(v), \enspace \forall~u,v\in U,
			\label{eq: strongly convex}
			\\
			&\|\nabla f(u) -  \nabla f(v)\| \leq L\|u-v\|, \enspace \forall~u,v\in U.
			\label{eq: Lipschitz continuous}
		\end{align}
	\end{subequations}
\end{assumption}
\vspace{-5pt}
\begin{remark}
	In the strongly convex case ($\alpha >0$), the parameter choice in Algorithm~\ref{alg:Stochastic Linearized ADMM} depends only on the strong convexity modulus $\alpha$, which suffices to obtain a faster nonergodic convergence rate. In the general convex case ($\alpha =0$), however, our parameter selection requires the knowledge of the gradient Lipschitz constant $L$, either exactly or via a reliable evaluation.
	
	It is worth noting that, for PDE-constrained optimization problems, although the gradient Lipschitz constant $L$ often exists theoretically, it is typically difficult to compute or evaluate accurately in practice. Moreover, in many applications the regularization term is strongly convex (e.g., $L^{2}$ Tikhonov regularization), and its regularization parameter is prescribed a priori. In such settings, the regularization parameter can serve as a known lower bound for the strong convexity parameter $\alpha$ of the objective functional.
\end{remark}

\begin{assumption}\label{assumption: unbiased estimate and bounded variance}
	The stochastic gradient $\nabla F(u,\xi)$ is unbiased and variance-bounded with the constant $\varrho$, i.e.,
	\begin{subequations}
		\begin{align}
			&\mathbb{E}\left[ \nabla F(u,\xi)  \right]
			= \nabla f(u), \enspace \forall~u\in U,
			\label{eq: unbiased estimate}
			\\
			&\mathbb{E}\left[ \left\| \nabla F(u,\xi) -\nabla f(u)\right\|^{2} \right]
			\leq \varrho^{2}, \enspace \forall~u\in U.
			\label{eq: bounded variance}
		\end{align}
	\end{subequations}
\end{assumption}
%remark
\begin{remark}\label{remark: Oracle assumptions}
	The conditions on SFO in Assumption~\ref{assumption: unbiased estimate and bounded variance} are standard in the literature of the SA approach, used since the seminal work of Robbins and Monro \cite{robbins1951stochastic}. The first condition (\ref{eq: unbiased estimate}) entails that, at each iteration the stochastic gradient $\nabla F(v_{k},\xi)$ is an unbiased estimator of the gradient $\nabla f(v_k)$ of the smooth functional $f$ at $v_{k}$. The second condition (\ref{eq: bounded variance}) is a global variance restriction on the noise caused by the stochastic approximation, and is weaker than the common condition often assumed in the study of SA approach (see, e.g. \cite{lan2012validation,nedic2014stochastic,nemirovski2009robust}), namely the boundedness of $\mathbb{E}\left[ \left\|\nabla F(u,\xi) \right\| ^{2} \right]$. While Assumption~\ref{assumption: unbiased estimate and bounded variance} is a classical assumption in the convergence analysis of the SA approach, it can be relaxed to permit the vanishing bias and the growth-bounded second moments of the stochastic gradient, as in recent work \cite{geiersbach2019projected}.
\end{remark}

Now we establish a technical result for a single iteration of Algorithm~\ref{alg:Stochastic Linearized ADMM}.
\begin{lemma}\label{lemma: one iteration}
	Let the sequences $\left\lbrace  (v_{k}, s_{k}, \psi_{k}) \right\rbrace$ and $\left\lbrace  (u_{k}, z_{k}, \lambda_{k}) \right\rbrace$ be generated by Algorithm~\ref{alg:Stochastic Linearized ADMM}. Then for any $\lambda \in U$, we have
		\vspace{-2pt}
		\begin{equation}\label{eq: one iteration}
		\begin{aligned}
			&
			f(v_{k+1}) + g(s_{k+1}) - f(u^{\ast}) - g(z^{\ast}) 
			-\left\langle \lambda, v_{k+1}-s_{k+1} \right\rangle 
			+\frac{\rho_{k}}{2}\left\|v_{k+1}-s_{k+1}\right\|^{2} 
			\\
			\leq 
			& \left\langle \nabla f(v_{k})-G_{k} , v_{k+1}-u^{\ast}\right\rangle
			+\left(\frac{L}{2}+\frac{\rho_{k}}{2(1-\mu)} 
			-\frac{\eta_{k}+\rho_{k}}{2}\right)\left\|v_{k+1}-v_{k}\right\|^{2} \\
			& 
			+\frac{\eta_{k}+\rho_{k}}{2}\left( 
			\left\|v_{k}-u^{\ast}\right\|^{2} -
			\left\|v_{k+1}-u^{\ast}\right\|^{2} \right) 
			+\frac{1}{2 \mu \rho_{k}} 
			\left(
			\left\|\psi_{k}-\lambda\right\|^{2}-\left\|\psi_{k+1}-\lambda\right\|^{2}
			\right) \\
			&
			-\frac{\alpha}{2}\left\|v_{k}-u^{\ast}\right\|^{2}
			+
			\left\langle \lambda_{k}-\psi_{k},v_{k+1}-s_{k+1}\right\rangle.
				\vspace{-2pt}
		\end{aligned}
	\end{equation}
\end{lemma}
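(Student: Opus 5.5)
The plan is to combine the first-order optimality conditions of the two subproblems \eqref{eq: z-subproblem} and \eqref{eq: u-subproblem} with the descent and strong-convexity inequalities of Assumption~\ref{assumption: strongly convex and Lipschitz continuous}. Throughout I use $u^{\ast}=z^{\ast}$ and the abbreviations $r_{k+1}:=v_{k+1}-s_{k+1}$ and $\Delta_{k}:=v_{k+1}-v_{k}$.

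\textbf{Step 1 (the $f$-part).} By the $L$-smoothness \eqref{eq: Lipschitz continuous} (descent lemma),
\[
f(v_{k+1})\le f(v_k)+\langle\nabla f(v_k),\Delta_k\rangle+\tfrac L2\|\Delta_k\|^2 .
\]
By \eqref{eq: strongly convex} with $u=v_k$, $v=u^\ast$,
\[
f(v_k)+\langle \nabla f(v_k),u^\ast-v_k\rangle\le f(u^\ast)-\tfrac\alpha2\|v_k-u^\ast\|^2 .
\]
Adding these gives
\[
f(v_{k+1})-f(u^\ast)\le \langle\nabla f(v_k),v_{k+1}-u^\ast\rangle+\tfrac L2\|\Delta_k\|^2-\tfrac\alpha2\|v_k-u^\ast\|^2 .
\]
I then split $\nabla f(v_k)=(\nabla f(v_k)-G_k)+G_k$. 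The first piece is exactly the noise term in \eqref{eq: one iteration}.

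\textbf{Step 2 (the two subproblems).} The optimality condition of \eqref{eq: u-subproblem}, tested at $u=u^\ast\in U_{ad}$, gives
\[
\langle G_k,v_{k+1}-u^\ast\rangle\le\langle \lambda_k-\rho_k r_{k+1}-\eta_k\Delta_k,\;v_{k+1}-u^\ast\rangle .
\]
The optimality condition of \eqref{eq: z-subproblem} is $-\lambda_k-\rho_k(s_{k+1}-v_k)\in\partial g(s_{k+1})$. The subgradient inequality at $z^\ast$ then yields
\[
g(s_{k+1})-g(z^\ast)\le\langle\lambda_k+\rho_k(s_{k+1}-v_k),\;z^\ast-s_{k+1}\rangle .
\]
Summing the two inequalities, the $\lambda_k$ terms combine (since $u^\ast=z^\ast$) to $\langle\lambda_k,r_{k+1}\rangle$.

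\textbf{Step 3 (multiplier terms).} After subtracting $\langle\lambda,r_{k+1}\rangle$, I write
\[
\langle\lambda_k-\lambda,r_{k+1}\rangle=\langle\lambda_k-\psi_k,r_{k+1}\rangle+\langle\psi_k-\lambda,r_{k+1}\rangle .
\]
The first piece is kept as is; it is the last term of \eqref{eq: one iteration}. For the second, \eqref{eq: lambda-subproblem} gives $\psi_{k+1}=\psi_k-\mu\rho_k r_{k+1}$, and the polarization identity then gives
\[
\langle\psi_k-\lambda,r_{k+1}\rangle=\tfrac1{2\mu\rho_k}\bigl(\|\psi_k-\lambda\|^2-\|\psi_{k+1}-\lambda\|^2\bigr)+\tfrac{\mu\rho_k}2\|r_{k+1}\|^2 .
\]

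\textbf{Step 4 (the $\rho_k$ cross terms).} This is the main bookkeeping obstacle. Substituting $s_{k+1}-v_k=-r_{k+1}+\Delta_k$, the $\rho_k$ terms become
\[
-\rho_k\|r_{k+1}\|^2+\rho_k\langle\Delta_k,u^\ast-v_{k+1}\rangle+\rho_k\langle\Delta_k,r_{k+1}\rangle .
\]
The middle term merges with the $\eta_k$ term into $-(\eta_k+\rho_k)\langle\Delta_k,v_{k+1}-u^\ast\rangle$. By the three-point identity this equals
\[
\tfrac{\eta_k+\rho_k}2\bigl(\|v_k-u^\ast\|^2-\|v_{k+1}-u^\ast\|^2-\|\Delta_k\|^2\bigr).
\]
For the cross term I use Young's inequality with weight $1-\mu$:
\[
\rho_k\langle\Delta_k,r_{k+1}\rangle\le\tfrac{(1-\mu)\rho_k}{2}\|r_{k+1}\|^2+\tfrac{\rho_k}{2(1-\mu)}\|\Delta_k\|^2 .
\]
The net coefficient of $\|r_{k+1}\|^2$ is then $-\rho_k+\tfrac{\mu\rho_k}2+\tfrac{(1-\mu)\rho_k}2=-\tfrac{\rho_k}2$. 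Moving this term to the left-hand side gives the term $\tfrac{\rho_k}{2}\|v_{k+1}-s_{k+1}\|^2$ there. The coefficient of $\|\Delta_k\|^2$ collects to $\tfrac L2+\tfrac{\rho_k}{2(1-\mu)}-\tfrac{\eta_k+\rho_k}2$.

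Collecting Steps 1–4 yields exactly \eqref{eq: one iteration}. The only delicate choice is the Young weight $1-\mu$ in Step 4, which is dictated by the requirement that the $\|r_{k+1}\|^2$ coefficients cancel to $-\rho_k/2$.
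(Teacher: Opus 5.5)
Your proposal is correct and follows essentially the same route as the paper: both arguments test the optimality conditions of the $z$- and $u$-subproblems at $z^{\ast}=u^{\ast}$, rewrite the $\psi$-update via the polarization identity, apply Young's inequality with weight $1-\mu$ to $\rho_{k}\langle v_{k+1}-v_{k},v_{k+1}-s_{k+1}\rangle$ and the three-point identity to the $(\eta_{k}+\rho_{k})$ term, and bound the $f$-part by the descent lemma plus strong convexity at $v_{k}$. Your coefficient bookkeeping also matches the paper's: $-\rho_{k}+\mu\rho_{k}/2+(1-\mu)\rho_{k}/2=-\rho_{k}/2$ for $\|v_{k+1}-s_{k+1}\|^{2}$, and $L/2+\rho_{k}/(2(1-\mu))-(\eta_{k}+\rho_{k})/2$ for $\|v_{k+1}-v_{k}\|^{2}$.
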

\begin{proof}
	From the optimality conditions of the subproblems (\ref{eq: z-subproblem}) and (\ref{eq: u-subproblem}), it follows that 
		\vspace{-3pt}
	\begin{equation}\label{eq: one iteration 1}
		g(z^{\ast})-g\left(s_{k+1}\right)
		+\left\langle \lambda_{k}-\rho_{k}\left(v_{k}-s_{k+1}\right), z^{\ast}-s_{k+1} \right\rangle \geq 0,
			\vspace{-3pt}
	\end{equation} 
	and
		\vspace{-3pt}
	\begin{equation}\label{eq: one iteration 2}
		\left\langle G_{k}-\lambda_{k}+\rho_{k}\left(v_{k+1}-s_{k+1}\right)
		+\eta_{k}\left(v_{k+1}-v_{k}\right), 
		u^{\ast}-v_{k+1}\right\rangle \geq 0.
			\vspace{-2pt}
	\end{equation}
	By using the equality $u^{\ast} = z^{\ast}$, we have from the above inequalities that
	\begin{equation}\label{eq: one iteration 3}
		\begin{aligned}
			& g\left(s_{k+1}\right)-g\left(z^{\ast}\right)+\left\langle G_{k}, v_{k+1}-u^{\ast} \right\rangle \\
			\leq 
			& \left\langle \lambda_{k}, v_{k+1}-s_{k+1} \right\rangle 
			+ \rho_{k}\left\langle v_{k+1}-s_{k+1}, s_{k+1}-v_{k+1} \right\rangle \\
			& 
			+\rho_{k}\left\langle v_{k+1}-v_{k}, z^{\ast}-s_{k+1} \right\rangle
			+\eta_{k}\left\langle v_{k+1}-v_{k}, u^{\ast}-v_{k+1} \right\rangle \\
			= 
			& \left\langle \lambda_{k}, v_{k+1}-s_{k+1} \right\rangle -\rho_{k}\left\|v_{k+1}-s_{k+1}\right\|^{2}  
			+\rho_{k}\left\langle v_{k+1}-v_{k}, v_{k+1}-s_{k+1} \right\rangle \\
			& 
			+ \left(\rho_{k}+\eta_{k}\right)\left\langle v_{k+1}-v_{k}, u^{\ast}-v_{k+1} \right\rangle.
		\end{aligned}
	\end{equation}
	Hence, it follows that 
	\vspace{-2pt}
	\begin{equation}\label{eq: one iteration 4}
		\begin{aligned}
			%			& \mathcal{L}\left(v_{k+1}, s_{k+1}, \lambda\right)
			%			-\mathcal{L}\left(u^{\ast}, z^{\ast}, \lambda\right) \\
			%			= 
			& 
			f(v_{k+1}) + g(s_{k+1}) - f(u^{\ast}) - g(z^{\ast}) 
			-\left\langle \lambda, v_{k+1}-s_{k+1} \right\rangle \\
			\leq 
			& 
			\left\langle\lambda_{k}-\lambda, v_{k+1}-s_{k+1}\right\rangle 
			+\rho_{k}\left\langle v_{k+1}-v_{k},v_{k+1}-s_{k+1}
			\right\rangle \\
			& 
			+ \left(\rho_{k}+\eta_{k}\right) \left\langle v_{k+1}-v_{k}, u^{\ast}-v_{k+1}\right\rangle-\rho_{k}\left\|v_{k+1}-s_{k+1}\right\|^{2}\\
			& 
			+\left\langle G_{k}, u^{\ast}-v_{k+1}\right\rangle 
			+ f\left(v_{k+1}\right)-f\left(u^{\ast}\right) .
			\vspace{-4pt}
		\end{aligned}
	\end{equation}	
	
	By virtue of the identity
	\vspace{-2pt}
	\begin{equation}\label{eq: 3-points identity}
		2\langle a-b,b-c\rangle = \|a-c\|^{2} - \|a-b\|^{2} - \|b-c\|^{2},
			\vspace{-2pt}
	\end{equation}
	and from (\ref{eq: lambda-subproblem}), we can get that	
	\vspace{-3pt}
	\begin{equation}\label{eq: one iteration 5}
		\begin{aligned}
			\left\langle \lambda_{k}-\lambda, v_{k+1}-s_{k+1} \right\rangle 
			= &
			\left\langle \lambda_{k}-\psi_{k}, v_{k+1}-s_{k+1} \right\rangle 
			+ \frac{1}{\mu \rho_{k}}\left\langle \psi_{k}-\lambda, \psi_{k}-\psi_{k+1}\right\rangle \\
			= &
			\left\langle \lambda_{k}-\psi_{k}, v_{k+1}-s_{k+1} \right\rangle 
			+\frac{\mu \rho_{k}}{2}\left\|v_{k+1}-s_{k+1}\right\|^{2} \\
			& +
			\frac{1}{2 \mu \rho_{k}}\left(\left\|\psi_{k}-\lambda\right\|^{2} 
			-\left\|\psi_{k+1}-\lambda\right\|^{2}\right) .
			\vspace{-4pt}
		\end{aligned}
	\end{equation}	
	In addition, using Young's inequality and the identity (\ref{eq: 3-points identity}) gives
	\vspace{-3pt}
	\begin{equation}\label{eq: one iteration 6}
		\left\langle v_{k+1}-v_{k}, v_{k+1}-s_{k+1} \right\rangle 
		\leq \frac{1}{2(1-\mu)}\left\|v_{k+1}-v_{k}\right\|^{2} 
		+\frac{1-\mu}{2}\left\|v_{k+1}-s_{k+1}\right\|^{2},
		\vspace{-3pt}
	\end{equation}
	and 
	\vspace{-3pt}
	\begin{equation}\label{eq: one iteration 7}
		\left\langle v_{k+1}-v_{k}, u^{\ast}-v_{k+1} \right\rangle 
		= 
		\frac{1}{2}\left(  \left\|v_{k}-u^{\ast}\right\|^{2}-\left\|v_{k+1}-u^{\ast}\right\|^{2}-\left\|v_{k}-v_{k+1}\right\|^{2} \right) ,
		\vspace{-3pt}
	\end{equation}
	respectively. Moreover, it follows from the strong convexity and gradient Lipschitz continuity of $f$ that
	\vspace{-5pt}
	\begin{equation}\label{eq: one iteration 8}
		\begin{aligned}
			& f\left(v_{k+1}\right) - f\left(u^{\ast}\right) 
			+\left\langle G_{k}, u^{\ast}-v_{k+1} \right\rangle \\
			\leq 
			& f\left(v_{k}\right) - f\left(u^{\ast}\right) 
			+ \left\langle \nabla f(v_{k})-G_{k}, v_{k+1}-v_{k} \right\rangle 
			+ \frac{L}{2}\left\|v_{k+1}-v_{k}\right\|^{2} 
			+ \left\langle G_{k}, u^{\ast}-v_{k} \right\rangle \\
			\leq 
			& \left\langle \nabla f(v_{k})-G_{k}, v_{k+1}-u^{\ast}\right\rangle 
			+ \frac{L}{2}\left\|v_{k+1}-v_{k}\right\|^{2} 
			- \frac{\alpha}{2}\left\|v_{k}-u^{\ast}\right\|^{2},
			\vspace{-5pt}
		\end{aligned}
	\end{equation}
	which, together with (\ref{eq: one iteration 4}), (\ref{eq: one iteration 5}), (\ref{eq: one iteration 6}) and (\ref{eq: one iteration 7}), completes the proof.
	%end	
\end{proof}

Below, we will specify the values of the parameters $\rho_{k}$, $\eta_{k}$ and $\theta_{k}$, and provide the convergence analysis of Algorithm~\ref{alg:Stochastic Linearized ADMM} through the above results. 
%For the sake of presentation, 
To present our analysis concisely, we let
\vspace{-7pt}
\begin{equation}\label{eq: notations nu and kappa}
	\nu = \underset{u,v \in U_{ad}}{\max}\|u-v\|^{2}, \enspace
	\kappa = \left\lceil \frac{L(1-\mu)}{\eta(1-\mu)-\rho\mu} \right\rceil,
	%	\enspace \text{and} \enspace 
	%	\phi_{k} = \frac{2-\mu}{ (2-\mu) \eta-\mu \rho -(k+1)^{-1}L(2-\mu)}.
	\vspace{-5pt}
\end{equation}
and
\vspace{-5pt}
\begin{equation}\label{eq: notation phi}
	\phi_{k} = \frac{1-\mu}{ \eta (1-\mu) -\mu \rho -k^{-1}L(1-\mu)}, 
	\enspace  \forall~k\geq 1.
	\vspace{-3pt}
\end{equation}

\subsection{Strongly convex case}
\label{subsec: nonergodic convergence of strongly convex case}
In this subsection, we aim to analyze Algorithm~\ref{alg:Stochastic Linearized ADMM} for strongly convex case to present the global strong convergence and faster convergence rate in a nonergodic sense. To establish the results on the iteration sequences themselves, we let the parameter $\theta_{k}$ be adaptive to the iteration number $k$ and solved by the following equation
\begin{equation}\label{eq: adaptive omega}
	\theta_{k+1}^2 = \theta_{k}^2 + \theta_{k+1},  
	\enspace\text{with}\enspace  
	\theta_{0} = 1 \enspace\text{and}\enspace \theta_{-1} = 0.
\end{equation}
Besides, we set the parameters $\rho_{k}$ and $\eta_{k}$ to  
\begin{equation}\label{eq: nonergodic parameters}
	\rho_{k} = \rho \theta_{k},\enspace \text{and} \enspace
	\eta_{k} = \eta \theta_{k},
\end{equation}
with the positive constants $\rho+\eta \leq \alpha$ and $\eta(1-\mu)>\rho\mu$ for all $k$.

Now we are ready to derive the global strong convergence of the iterates $\{u_{k}\}$ and $\{z_{k}\}$.
\begin{theorem}\label{theorem: strong convergence for nonergodic setting}
	Let the sequence $\left\lbrace  (u_{k}, z_{k}, \lambda_{k}) \right\rbrace$ be generated by Algorithm~\ref{alg:Stochastic Linearized ADMM} with the parameter settings (\ref{eq: adaptive omega}) and (\ref{eq: nonergodic parameters}). Under Assumptions~\ref{assumption: strongly convex and Lipschitz continuous} and \ref{assumption: unbiased estimate and bounded variance}, it holds that
	\vspace{-3pt}
	\begin{equation}\label{eq: strong convergence for nonergodic setting}
		\lim_{k \rightarrow \infty} \mathbb{E}
		\left[\left\|u_{k}-u^{\ast}\right\| \right]=0, 
		\enspace \text{and} \enspace
		\lim _{k \rightarrow \infty} \mathbb{E}
		\left[\left\|z_{k}-z^{\ast}\right\|\right]=0.
		\vspace{-3pt}
	\end{equation}	
\end{theorem}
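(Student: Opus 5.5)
The plan is to turn Lemma~\ref{lemma: one iteration} into a telescoping Lyapunov inequality with $\lambda=\lambda^{\ast}$. Then I extract a bound $\mathbb{E}\|v_{k}-u^{\ast}\|^{2}=O(\log k/k)$ and pass to $u_{k},z_{k}$ through the averaging structure of (\ref{eq: u-update})--(\ref{eq: z-update}).

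\textbf{Step 1: a nonnegative left-hand side.} Take $\lambda=\lambda^{\ast}$ in (\ref{eq: one iteration}). Strong convexity of $f$ and the variational inequality (\ref{eq: VI of reformulated MP}) at $(u,z)=(v_{k+1},s_{k+1})$ give
\begin{equation*}
f(v_{k+1})+g(s_{k+1})-f(u^{\ast})-g(z^{\ast})-\langle\lambda^{\ast},v_{k+1}-s_{k+1}\rangle\geq \tfrac{\alpha}{2}\|v_{k+1}-u^{\ast}\|^{2}\geq 0 .
\end{equation*}
So the left side of (\ref{eq: one iteration}) is nonnegative, and it still contains the useful term $\frac{\rho_k}{2}\|v_{k+1}-s_{k+1}\|^{2}$.

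\textbf{Step 2: the right-hand side term by term.}
\begin{itemize}
\item \emph{Noise term.} Let $\bar v_{k+1}$ solve (\ref{eq: u-subproblem}) with $G_k$ replaced by $\nabla f(v_k)$. Since $\bar v_{k+1}$ is $\mathcal{F}_k$-measurable, (\ref{eq: unbiased estimate}) gives $\mathbb{E}\langle\nabla f(v_k)-G_k,\bar v_{k+1}-u^{\ast}\rangle=0$. The projection is nonexpansive, so $\|v_{k+1}-\bar v_{k+1}\|\le\|G_k-\nabla f(v_k)\|/(\rho_k+\eta_k)$. Using (\ref{eq: bounded variance}) and independence within the batch, the expected noise term is at most $\varrho^{2}/\big(m_k(\rho+\eta)\theta_k\big)$.
\item \emph{The $\|v_{k+1}-v_k\|^{2}$ term.} With $\rho_k=\rho\theta_k$ and $\eta_k=\eta\theta_k$, its coefficient is $\frac{L}{2}-\frac{\theta_k}{2}\big(\eta-\frac{\rho\mu}{1-\mu}\big)$. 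Because $\eta(1-\mu)>\rho\mu$, this is nonpositive once $\theta_k\ge\kappa$. Since $\theta_k\sim k/2$, only finitely many indices are affected, and their contribution is a finite constant.
\item \emph{Distance terms.} From (\ref{eq: adaptive omega}), $\theta_k-\theta_{k-1}\le 1$. Together with $\rho+\eta\le\alpha$ this gives
\begin{equation*}
\frac{(\eta+\rho)\theta_k}{2}\|v_k-u^{\ast}\|^{2}-\frac{\alpha}{2}\|v_k-u^{\ast}\|^{2}\le \frac{(\eta+\rho)\theta_{k-1}}{2}\|v_k-u^{\ast}\|^{2}.
\end{equation*}
So $a_k:=\frac{(\eta+\rho)\theta_{k-1}}{2}\|v_k-u^{\ast}\|^{2}$ telescopes.
\item \emph{Dual terms.} Since $\theta_k$ is nondecreasing, the dual terms $\frac{1}{2\mu\rho\theta_k}\big(\|\psi_k-\lambda^{\ast}\|^{2}-\|\psi_{k+1}-\lambda^{\ast}\|^{2}\big)$ telescope up to nonpositive remainders.
\end{itemize}

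\textbf{Step 3: the cross term (main obstacle).} The hardest step is $\langle\lambda_k-\psi_k,v_{k+1}-s_{k+1}\rangle$. By (\ref{eq: lambda-update}), $\lambda_k-\psi_k=-\mu\rho\theta_{k-1}^{2}(u_k-z_k)$. Using $\theta_k^{2}-\theta_k=\theta_{k-1}^{2}$, the averaging steps give
\begin{equation*}
\theta_k^{2}(u_{k+1}-z_{k+1})=\theta_{k-1}^{2}(u_k-z_k)+\theta_k(v_{k+1}-s_{k+1}).
\end{equation*}
Set $w_k=\theta_{k-1}^{2}(u_k-z_k)$. Then the cross term equals $-\frac{\mu\rho}{\theta_k}\langle w_k,w_{k+1}-w_k\rangle$, and by (\ref{eq: 3-points identity}) it splits into $\frac{\mu\rho}{2\theta_k}\big(\|w_k\|^{2}-\|w_{k+1}\|^{2}\big)$ plus $\frac{\mu\rho\theta_k}{2}\|v_{k+1}-s_{k+1}\|^{2}$. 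The last piece is dominated by the $\frac{\rho_k}{2}\|v_{k+1}-s_{k+1}\|^{2}$ kept on the left. In fact, one should combine $\psi_k$ and $w_k$ into the single quantity $\psi_k-\mu\rho w_k$, which is the actual dual object that telescopes. I expect the bookkeeping here to need care, possibly keeping only a fraction $(1-\mu)$ of the feasibility term.

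\textbf{Step 4: conclusion.} Summing from $0$ to $k-1$ and taking total expectation gives
\begin{equation*}
\mathbb{E}\,a_k+\sum_{j<k}\tfrac{(1-\mu)\rho\theta_j}{2}\,\mathbb{E}\|v_{j+1}-s_{j+1}\|^{2}\le C+\sum_{j<k}\frac{\varrho^{2}}{m_j(\rho+\eta)\theta_j}\le C+C'\log k .
\end{equation*}
Since $\theta_{k-1}\ge k/2$, this yields $\mathbb{E}\|v_k-u^{\ast}\|^{2}=O(\log k/k)\to0$. Unrolling (\ref{eq: u-update}) shows $u_k=\theta_{k-1}^{-2}\sum_{j<k}\theta_j v_{j+1}$, with weights summing to $1$ by (\ref{eq: adaptive omega}). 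Jensen's inequality and a Toeplitz/Ces\`aro argument then give $\mathbb{E}\|u_k-u^{\ast}\|\to0$. For $z_k$, write $z_k-z^{\ast}=(u_k-u^{\ast})-(u_k-z_k)$, with $u_k-z_k=w_k/\theta_{k-1}^{2}$. The bound $\mathbb{E}\|w_k\|^{2}\le C\theta_k(1+\log k)$ from the telescoped Lyapunov function, or alternatively the summed feasibility series plus Cauchy--Schwarz on the weighted average of $v_{j+1}-s_{j+1}$, gives $\mathbb{E}\|u_k-z_k\|\to 0$. This proves (\ref{eq: strong convergence for nonergodic setting}).
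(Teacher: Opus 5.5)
Your proof is correct, but it takes a different route from the paper's.

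\textbf{The paper's route.} The paper multiplies Lemma~\ref{lemma: one iteration} by $\theta_k$ before summing. Since $\theta_k^2-\theta_k=\theta_{k-1}^2$, Jensen's inequality turns the weighted sum of gaps at $(v_{k+1},s_{k+1})$ into $\theta_{K-1}^2$ times the gap at the output $(u_K,z_K)$. Strong convexity is then applied directly at $u_K$ via (\ref{eq: strong convergence for nonergodic setting 12}), and feasibility comes from $\sum_k\theta_k^2\|v_{k+1}-s_{k+1}\|^2$ and Cauchy--Schwarz. The noise is handled by a martingale split plus Young's inequality. The early indices, where the $\|v_{k+1}-v_k\|^2$ coefficient is positive, are absorbed through the diameter $\nu$ of $U_{ad}$.

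\textbf{Your route.} You sum the lemma unweighted, with Lyapunov quantity $\frac{(\eta+\rho)\theta_{k-1}}{2}\|v_k-u^\ast\|^2$. You treat the noise with the ghost iterate $\bar v_{k+1}$ and nonexpansiveness of the projection. You control $v_k$ first and then transfer to $u_k$ by Toeplitz averaging.

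\textbf{What your route gains.}
No boundedness of $U_{ad}$ is needed. You only need $\mathbb{E}\|v_{j+1}-v_j\|^2<\infty$ for finitely many $j$. This follows from a short induction on second moments under Assumptions~\ref{assumption: strongly convex and Lipschitz continuous} and~\ref{assumption: unbiased estimate and bounded variance}, and you should state it. You also obtain $v_k\to u^\ast$. Keeping the term $\frac{\mu\rho}{2\theta_{k-1}}\|w_k\|^2$ gives $\mathbb{E}\|u_k-z_k\|^2=O(\log k/k^{3})$.

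\textbf{What your route loses.}
The $1/\theta_k$ factor on the noise costs a $\log k$: you get $O(\log k/k)$ for $\mathbb{E}\|u_k-u^\ast\|^2$, versus the paper's $O(1/K)$. More importantly, the paper's $\theta_k$-weighted inequality (\ref{eq: strong convergence for nonergodic setting 3}) with free $\lambda$ is exactly what is reused for the accelerated $O(1/K^2)$ objective rate in Theorem~\ref{theorem: convergence rate for nonergodic setting}. An unweighted sum cannot deliver that rate.

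\textbf{Your Step 3 hedge.} It is unnecessary. The terms $\frac{1}{2\mu\rho\theta_k}(\|\psi_k-\lambda^\ast\|^2-\|\psi_{k+1}-\lambda^\ast\|^2)$ and $\frac{\mu\rho}{2\theta_k}(\|w_k\|^2-\|w_{k+1}\|^2)$ each telescope by Abel summation. This works because $1/\theta_k$ is nonincreasing and $w_0=\theta_{-1}^2(u_0-z_0)=0$, which leaves $-\frac{\mu\rho}{2\theta_{k-1}}\|w_k\|^2$. No merging into $\psi_k-\mu\rho w_k$ $(=\lambda_k)$ is required.
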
 
\begin{proof}
	By using Jensen's inequality and the convexity of $f$ and $g$, it follows from (\ref{eq: u-update}), (\ref{eq: z-update}) and (\ref{eq: adaptive omega}) that
	\vspace{-3pt}
	\begin{equation*}\label{eq: strong convergence for nonergodic setting 0}
		\theta_{k}^{2}\left( f(u_{k+1}) + g(z_{k+1}) \right)  
		\leq
		\theta_{k-1}^{2}\left( f(u_{k}) + g(z_{k}) \right) 
		+
		\theta_{k}\left( f(v_{k+1}) + g(s_{k+1})  \right) 
		\vspace{-3pt}
	\end{equation*}
	and
	\vspace{-7pt}
	\begin{equation*}\label{eq: strong convergence for nonergodic setting 1}
		\theta_{k}^{2}\left\langle \lambda, u_{k+1}-z_{k+1} \right\rangle 
		= 
		\theta_{k-1}^{2}\left\langle \lambda, u_{k}-z_{k} \right\rangle +
		\theta_{k}	\left\langle \lambda, v_{k+1}-s_{k+1} \right\rangle,	
		\vspace{-3pt}
	\end{equation*}
	which additionally yields that
	\vspace{-3pt}
	\begin{equation*}\label{eq: strong convergence for nonergodic setting 2}
		\begin{aligned}
			&\theta_{k}^{2}\left( f(u_{k+1}) + g(z_{k+1}) - f(u) - g(z) 
			- \left\langle \lambda, u_{k+1}-z_{k+1} \right\rangle \right)  \\
			&
			-
			\theta_{k-1}^{2}\left( f(u_{k}) + g(z_{k}) - f(u) - g(z) 
			- \left\langle \lambda, u_{k}-z_{k} \right\rangle \right) \\
			\leq
			&
			\theta_{k}\left( f(v_{k+1}) + g(s_{k+1}) - f(u) - g(z) -\left\langle \lambda, v_{k+1}-s_{k+1} \right\rangle \right).
			\vspace{-5pt}
		\end{aligned}
	\end{equation*}
	Letting $u=u^{\ast}$ and $z=z^{\ast}$ in the above inequality, and summing up it from $k=0$ through $K-1$, together with Lemma~\ref{lemma: one iteration} and the parameters given in (\ref{eq: nonergodic parameters}), we have after some arrangement that
	\begin{equation}\label{eq: strong convergence for nonergodic setting 3}
		\begin{aligned}
			&\theta_{K-1}^{2}\left( f(u_{K}) + g(z_{K}) - f(u^{\ast}) - g(z^{\ast}) 
			- \left\langle \lambda, u_{K}-z_{K} \right\rangle \right) 
			+
			\sum_{k=0}^{K-1} \frac{\rho\theta_{k}^{2}}{2}\left\|v_{k+1}-s_{k+1}\right\|^{2}  
			\\
			\leq
			&
			\sum_{k=0}^{K-1}
			\theta_{k}\left( 
			\left\langle \nabla f(v_{k})-G_{k}, v_{k+1}-u_{k}\right\rangle 
			+\left(\frac{L}{2}+\frac{\rho\theta_{k}}{2(1-\mu)} -\frac{(\eta+\rho)\theta_{k}}{2}\right)\left\|v_{k+1}-v_{k}\right\|^{2}
			\right) \\
			&
			+
			\sum_{k=0}^{K-1}
			\left( 
			\frac{(\eta+\rho)\theta_{k}^{2}}{2}\left( 
			\left\|v_{k}-u^{\ast}\right\|^{2} - \left\|v_{k+1}-u^{\ast}\right\|^{2} \right) 
			-\frac{\alpha\theta_{k}}{2}\left\|v_{k}-u^{\ast}\right\|^{2}
			\right) 
			\\
			&
			+
			\sum_{k=0}^{K-1} \theta_{k} \left\langle \nabla f(v_{k})-G_{k}, u_{k}-u^{\ast}\right\rangle
			+\sum_{k=0}^{K-1}\theta_{k}
			\left\langle \lambda_{k}-\psi_{k}, v_{k+1}-s_{k+1} \right\rangle 
			\\
			&
			+
			\frac{1}{2 \mu \rho}\left\|\lambda_{0}-\lambda\right\|^{2}  .
		\end{aligned}
	\end{equation}
	
	Using Young's inequality and the facts $\theta_{k} \geq (k+1)/2$ (see, e.g., \cite{beck2009fast}) and $\eta (1-\mu) - \mu\rho > \theta_{2\kappa}^{-1}L(1-\mu) > 0$, we obtain that 
	\vspace{-5pt}
	\begin{equation*}\label{eq: strong convergence for nonergodic setting 4}
		\begin{aligned}
			&
			\left\langle \nabla f(v_{k})-G_{k}, v_{k+1}-u_{k}\right\rangle 
			+\left(\frac{L}{2}+\frac{\rho\theta_{k}}{2(1-\mu)} -\frac{(\eta+\rho)\theta_{k}}{2}\right)\left\|v_{k+1}-v_{k}\right\|^{2}
			\\
			\leq
			&
			\frac{1-\mu}{2\left( \eta (1-\mu)-\mu \rho \right)\theta_{k}}\left\|\nabla f(v_{k})-G_{k}\right\|^{2} 
			+
			\frac{L}{2}\left\|v_{k+1}-v_{k}\right\|^{2}
			\\
			\leq
			&
			\frac{ \phi_{\theta_{2\kappa}} }{ 2\theta_{k} }
			\left\|\nabla f(v_{k})-G_{k}\right\|^{2} 
			+ \frac{L\nu}{2},
			\vspace{-5pt}
		\end{aligned}
	\end{equation*}
	for any $k\geq 0$, where the second inequality we have used the notations defined in (\ref{eq: notations nu and kappa}) and (\ref{eq: notation phi}). Particularly, noting that $\phi_{\theta_{k}} < \phi_{\theta_{2\kappa}}$ for all $k+1 \geq \kappa$, we use Young's inequality again and have  
	\vspace{-8pt}
	\begin{equation*}\label{eq: strong convergence for nonergodic setting 5}
		\begin{aligned}
			&
			\left\langle \nabla f(v_{k})-G_{k}, v_{k+1}-u_{k}\right\rangle 
			+\left(\frac{L}{2}+\frac{\rho\theta_{k}}{2(1-\mu)} -\frac{(\eta+\rho)\theta_{k}}{2}\right)\left\|v_{k+1}-v_{k}\right\|^{2}
			\\
			\leq
			&
			\frac{1-\mu}{2\left( \left( \eta (1-\mu)-\mu \rho \right)\theta_{k} - L(1-\mu) \right) }\left\|\nabla f(v_{k})-G_{k}\right\|^{2} 
			\\
			\leq
			&
			\frac{ \phi_{\theta_{2\kappa}} }{ 2\theta_{k} }
			\left\|\nabla f(v_{k})-G_{k}\right\|^{2}. 
		\end{aligned}
	\end{equation*}
	Then we conclude from the above observations and (\ref{eq: adaptive omega}) that 
	\begin{equation}\label{eq: strong convergence for nonergodic setting 6}
		\begin{aligned}
			& \sum_{k=0}^{K-1}
			\theta_{k}\left( 
			\left\langle \nabla f(v_{k})-G_{k}, v_{k+1}-u_{k}\right\rangle 
			+\left(\frac{L}{2}+\frac{\rho\theta_{k}}{2(1-\mu)} -\frac{(\eta+\rho)\theta_{k}}{2}\right)\left\|v_{k+1}-v_{k}\right\|^{2}
			\right)  \\
			\leq
			&
			\sum_{k=0}^{K-1} \frac{\phi_{\theta_{2\kappa}}}{2} 
			\left\|\nabla f(v_{k})-G_{k}\right\|^{2} 
			+ \frac{L\nu}{2}\sum_{k=0}^{2\kappa-1} \left( \theta_{k}^{2} - \theta_{k-1}^{2} \right) \\
			\leq
			&
			\frac{\phi_{\theta_{2\kappa}}}{2} \sum_{k=0}^{K-1} \left\|\nabla f(v_{k})-G_{k}\right\|^{2} 
			+ \frac{L \nu \theta_{2\kappa-1}^{2}}{2} .
			\vspace{-5pt}
		\end{aligned}
	\end{equation}
	Also, using (\ref{eq: adaptive omega}) with $\eta + \rho \leq \alpha$ gives
	\vspace{-3pt}
	\begin{equation}\label{eq: strong convergence for nonergodic setting 7}
		\begin{aligned}
			&\sum_{k=0}^{K-1}\left( 
			\frac{(\eta+\rho)\theta_{k}^{2}}{2}\left( 
			\left\|v_{k}-u^{\ast}\right\|^{2} - \left\|v_{k+1}-u^{\ast}\right\|^{2} \right) 
			-\frac{\alpha\theta_{k}}{2}\left\|v_{k}-u^{\ast}\right\|^{2} 
			\right) \\
			\leq
			&\sum_{k=0}^{K-1}\left( 
			\frac{(\eta+\rho)\theta_{k-1}^{2}}{2} \left\|v_{k}-u^{\ast}\right\|^{2}
			-
			\frac{(\eta+\rho)\theta_{k}^{2}}{2} \left\|v_{k+1}-u^{\ast}\right\|^{2}
			\right) 
			\leq
			0  .
			\vspace{-5pt}
		\end{aligned}
	\end{equation}
	From (\ref{eq: u-update}), (\ref{eq: z-update}) and (\ref{eq: adaptive omega}), we have 
	\begin{equation*}\label{eq: strong convergence for nonergodic setting 8}
		\begin{aligned}
			\left\|u_{k+1}-z_{k+1}\right\|^{2} 
			= 
			&
			(1-\theta_{k}^{-1})^2\left\|u_{k}-z_{k}\right\|^{2} + \theta_{k}^{-2}\left\|v_{k+1}-s_{k+1}\right\|^{2}\\
			& 
			+2\theta_{k}^{-1}(1-\theta_{k}^{-1})
			\left\langle u_{k}-z_{k},v_{k+1}-s_{k+1}\right\rangle,
		\end{aligned}
	\end{equation*}
	which, together with (\ref{eq: lambda-update})  implies  
	\vspace{-3pt}
	\begin{equation}\label{eq: strong convergence for nonergodic setting 9}
		\begin{aligned}
			& \sum_{k=0}^{K-1}\theta_{k}
			\left\langle \lambda_{k}-\psi_{k}, v_{k+1}-s_{k+1} \right\rangle
			=
			\sum_{k=0}^{K-1} - \mu\rho \theta_{k}^{2}(\theta_{k}-1)
			\left\langle u_{k}-z_{k},v_{k+1}-s_{k+1}\right\rangle\\
			= 
			& 
			- \frac{\mu\rho}{2}\sum_{k=0}^{K-1}
			\left( 
			\theta_{k}^{4}\left\|u_{k+1}-z_{k+1}\right\|^{2}
			-\theta_{k}^{4}(1-\theta_{k}^{-1})^2\left\|u_{k}-z_{k}\right\|^{2} 
			-\theta_{k}^{2}\left\|v_{k+1}-s_{k+1}\right\|^{2} 
			\right) \\
			\leq 
			& -\frac{\mu \rho}{2}\sum_{k=0}^{K-1}
			\theta_{k}^{2}\left\|v_{k+1}-s_{k+1}\right\|^{2}. 
			\vspace{-3pt}
		\end{aligned}
	\end{equation}
	Moreover, note that from (\ref{eq: u-update}), (\ref{eq: z-update}) and (\ref{eq: adaptive omega})
	\vspace{-5pt}
	\begin{equation*}\label{eq: strong convergence for nonergodic setting 10}
		\begin{aligned}
			\theta_{k}^{2}\left\|v_{k+1}-s_{k+1}\right\|^{2} 
			=
			\left\|\theta_{k}^{2}(u_{k+1}-z_{k+1}) - \theta_{k-1}^{2}(u_{k}-z_{k})\right\|^{2},
		\end{aligned}
	\end{equation*}
	and thus by Cauchy-Schwarz inequality,  we obtain that
	\vspace{-3pt}
	\begin{equation}\label{eq: strong convergence for nonergodic setting 11}
		\begin{aligned}
			\sum_{k=0}^{K-1} 
			\theta_{k}^{2}\left\|v_{k+1}-s_{k+1}\right\|^{2} 
			%			& 
			%			\geq 
			%			\sum_{k=0}^{K-1}
			%			\left(\theta_{k}^{2}\left\|u_{k+1}-z_{k+1}\right\|-\theta_{k-1}^{2}\left\|u_{k}-z_{k}\right\|
			%			\right)^2 \\
			& 
			\geq \frac{1}{K}
			\left(\sum_{k=0}^{K-1}
			\left( \theta_{k}^{2}\left\|u_{k+1}-z_{k+1}\right\|-\theta_{k-1}^{2}\left\|u_{k}-z_{k}\right\| \right) 
			\right)^2 \\
			& 
			\geq 
			\frac{\theta_{K-1}^4}{K}\left\|u_{K}-z_{K}\right\|^{2} .
			\vspace{-3pt}
		\end{aligned}
	\end{equation} 
	In addition, from the strong convexity of the functional $f$ and the optimality condition (\ref{eq: VI of reformulated MP}), we clearly have 
	\begin{equation}\label{eq: strong convergence for nonergodic setting 12}
		f(u_{K}) + g(z_{K}) - f(u^{\ast}) - g(z^{\ast}) 
		-\left\langle \lambda^{\ast}, u_{K}-z_{K} \right\rangle
		\geq
		\frac{\alpha}{2}\|u_{K} -u^{\ast} \|^{2}.
	\end{equation}
	Combining (\ref{eq: strong convergence for nonergodic setting 3}) with (\ref{eq: strong convergence for nonergodic setting 12}), substituting (\ref{eq: strong convergence for nonergodic setting 6}), (\ref{eq: strong convergence for nonergodic setting 7}), (\ref{eq: strong convergence for nonergodic setting 9}) and (\ref{eq: strong convergence for nonergodic setting 11}) into it, and arranging terms bring us
	\begin{equation}\label{eq: strong convergence for nonergodic setting 13}
		\begin{aligned}
			&\frac{\alpha \theta_{K-1}^{2}}{2} 
			\left\|u_{K}-u^{\ast}\right\|^{2} 
			+ 
			\frac{\rho(1-\mu)\theta_{K-1}^{4}}{2 K} 
			\left\|u_{K}-z_{K}\right\|^{2}  
			\\
			\leq\,
			&
			\frac{\phi_{\theta_{2\kappa}}}{2} \sum_{k=0}^{K-1} \left\|\nabla f(v_{k})-G_{k}\right\|^{2} 
			+ \sum_{k=0}^{K-1} \theta_{k} \left\langle \nabla f(v_{k})-G_{k}, u_{k}-u^{\ast}\right\rangle
            + \frac{L \nu \theta_{2\kappa-1}^{2}}{2} 
			+ \frac{1}{2 \mu \rho}\left\|\lambda_{0}-\lambda\right\|^{2}.
		\end{aligned}
	\end{equation}
	
	Since $v_{k},  u_{k}$ are $\mathcal{F}_{k}$-measurable and $\Xi_{k}$ is independent of $\Xi_{0},\ldots,\Xi_{k-1}$, we have
	\begin{equation}\label{eq: strong convergence for nonergodic setting 14}
		\begin{aligned}
			\mathbb{E}\left[ \left\langle \nabla f(v_{k})-G_{k}, u_{k}-u^{\ast}\right\rangle \right]
			=
			&
			\mathbb{E}\left[ 
			\mathbb{E}\left[ \left. \left\langle \nabla f(v_{k})-G_{k}, u_{k}-u^{\ast}\right\rangle \right| \mathcal{F}_{k} \right]  
			\right]\\
			=
			&
			\mathbb{E}\left[
			\left.\mathbb{E}\left[ \left\langle \nabla f(v)-G_{k}, u-u^{\ast}\right\rangle \right]\right|_{v=v_{k}, u=u_{k}}
			\right] \\
			=
			&
			0.
		\end{aligned}
	\end{equation}
	In addition, noting that the independence of the realizations $\Xi_{k}$ yields that for any $\xi_{k,i},\xi_{k,j} \in \Xi_{k}$
	\vspace{-10pt}
	\begin{equation*}\label{eq: strong convergence for nonergodic setting 15}
		\left.\mathbb{E}\left[
		\left\langle \nabla f(u)-\nabla F(u,\xi_{k,i}), 
		\nabla f(u)-\nabla F(u,\xi_{k,j})\right\rangle 
		\right]\right|_{u=u_{k}} = 0,
	\end{equation*}
	we obtain that
	\vspace{-5pt}
	\begin{equation}\label{eq: strong convergence for nonergodic setting 16}
		\begin{aligned}
			\mathbb{E}\left[ 
			\left.\left\|\nabla f(v_{k})-G_{k}\right\|^{2}\right|\mathcal{F}_{k} \right]
			&
			=
			\frac{1}{m_{k}^{2}}
			\mathbb{E}\left[ 
			\left.\left\| \sum_{i=1}^{m_{k}}\left( 
			\nabla f(v_{k})-\nabla F(v_{k},\xi_{k,i}) \right) \right\|^{2}\right|\mathcal{F}_{k} \right] \\
			% &
			% =
			% \frac{1}{m_{k}^{2}}
			% \left.\mathbb{E}\left[ 
			% \left\| \sum_{i=1}^{m_{k}}\left( 
			% \nabla f(v)-\nabla F(v,\xi_{k,i}) \right) \right\|^{2} \right]\right|_{v=v_{k}}
			% \\
			&
			=
			\frac{1}{m_{k}^{2}}\sum_{i=1}^{m_{k}}
			\left.\mathbb{E}\left[ 
			\left\| \nabla f(v)-\nabla F(v,\xi_{k,i})\right\|^{2} \right]\right|_{v=v_{k}}
			\\	
			&
			\leq \frac{\varrho^{2}}{m_{k}}.		
			\vspace{-5pt}					
		\end{aligned}
	\end{equation}
	
	Taking the expectation on both sides of (\ref{eq: strong convergence for nonergodic setting 13}), it follows from (\ref{eq: strong convergence for nonergodic setting 14}) and (\ref{eq: strong convergence for nonergodic setting 16}) that 
		\begin{equation}\label{eq: strong convergence for nonergodic setting 17}
		\begin{aligned}
			&
			\frac{\alpha \theta_{K-1}^{2}}{2} 
			\mathbb{E}\left[ \left\|u_{K}-u^{\ast}\right\|^{2} \right] 
			+ 
			\frac{\rho(1-\mu)\theta_{K-1}^{4}}{2 K} 
			\mathbb{E}\left[ \left\|u_{K}-z_{K}\right\|^{2} \right] 
			\\
			\leq\,
			&
			\frac{\phi_{\theta_{2\kappa}}\varrho^{2}}{2}\sum_{k=0}^{K-1}\frac{1}{m_{k}}
			+ \frac{L D \theta_{2\kappa-1}^{2}}{2} 
			+ \frac{1}{2 \mu \rho}\left\|\lambda_{0}-\lambda^{\ast}\right\|^{2}.
		\end{aligned}
	\end{equation}
	Using the facts $\theta_{k-1}\geq k/2$ and $\sum_{k=0}^{K-1} 1/m_{k} \leq K$ gives
	\begin{equation}\label{eq: strong convergence for nonergodic setting 18}
		\mathbb{E}\left[ \left\|u_{K}-u^{\ast}\right\|^{2} \right] 
		\leq
		\frac{4}{\alpha K}
		\left( 
		\phi_{\theta_{2\kappa}}\varrho^{2}
		+ \frac{L \nu \theta_{2\kappa-1}^{2}}{K} 
		+ \frac{1}{\mu \rho K}\left\|\lambda_{0}-\lambda^{\ast}\right\|^{2}
		\right) ,
	\end{equation}
	and 
	\begin{equation}\label{eq: strong convergence for nonergodic setting 19}
		\mathbb{E}\left[ \left\|u_{K}-z_{K}\right\|^{2} \right]
		\leq
		\frac{16}{\rho(1-\mu) K^{2}}
		\left( 
		\phi_{\theta_{2\kappa}}\varrho^{2}
		+ \frac{L \nu \theta_{2\kappa-1}^{2}}{K} 
		+ \frac{1}{\mu \rho K}\left\|\lambda_{0}-\lambda^{\ast}\right\|^{2}
		\right).
	\end{equation}
	Then, in view of $K \rightarrow \infty$, we have that
	\begin{equation*}
		\lim_{k \rightarrow \infty} \mathbb{E}
		\left[\left\|u_{k}-u^{\ast} \right\| \right]=0, 
		\enspace \text{and} \enspace
		\lim _{k \rightarrow \infty} \mathbb{E}
		\left[\left\|u_{k} - z_{k} \right\|\right]=0.
	\end{equation*}
	Combining the above two equalities with $u^{\ast} = z^{\ast}$, we obtain that
	\begin{equation*}
		\lim _{k \rightarrow \infty} \mathbb{E}
		\left[\left\|z_{k} - z^{\ast} \right\|\right]=0,
	\end{equation*}
	which completes the proof.
	%
	%\par\hspace{1ex}
\end{proof}

Although not the main goal of this work, the above results yield the nonergodic $O(1/K)$ rates for the squared distances between the primal iterates $u_{k}, z_{k}$ and the optimal primal solutions $u^{\ast}, z^{\ast}$, respectively, along with the nonergodic $O(1/K^{2})$ rate for the squared feasibility violation. Before establishing the faster nonergodic results, we set the batch sizes $m_{k}$ to be reciprocally summable, i.e., there exists a constant $M > 0$ such that
\begin{equation}\label{eq: batch size bound}
	\sum_{k=0}^{\infty} \frac{1}{m_{k}} = \frac{1}{M} < \infty.
\end{equation}

\begin{theorem}\label{theorem: convergence rate for nonergodic setting}
	Let the sequence $\left\lbrace  (u_{k}, z_{k}, \lambda_{k}) \right\rbrace$ be generated by Algorithm~\ref{alg:Stochastic Linearized ADMM} with the parameter settings (\ref{eq: adaptive omega}), (\ref{eq: nonergodic parameters}) and (\ref{eq: batch size bound}). 
	Under Assumptions~\ref{assumption: strongly convex and Lipschitz continuous} and \ref{assumption: unbiased estimate and bounded variance}, it holds that for any $c \geq 2\|\lambda^{\ast}\|$
	\begin{equation}\label{eq: convergence rate for nonergodic strongly convex setting}
		\mathbb{E} \left[ f(u_{K}) + g(z_{K}) - f(u^{\ast})- g(z^{\ast}) \right]
		\leq \frac{C_{1}}{K^{2}}, 
		\enspace \text{and~} \enspace
		\mathbb{E} \left[ \| u_{K}- z_{K} \| \right]
		\leq \frac{2C_{1}}{cK^{2}},
	\end{equation}	
	where
	\vspace{-10pt}
	\begin{equation*}
		C_{1} = \frac{2\varrho^{2}\phi_{\theta_{2\kappa}}}{M} 
		+ 2L\nu \theta_{2\kappa-1}^{2}
		+ \frac{4\left( c^{2} + \|\lambda_{0}\|^{2} \right)}{\mu\rho}.
		\vspace{-5pt}
	\end{equation*}	
\end{theorem}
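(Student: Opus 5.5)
The plan is to rerun the summation argument from the proof of Theorem~\ref{theorem: strong convergence for nonergodic setting}, but to keep $\lambda$ free and use the objective-gap term rather than the distance $\|u_K-u^{\ast}\|$. Starting from (\ref{eq: strong convergence for nonergodic setting 3}), I substitute (\ref{eq: strong convergence for nonergodic setting 6}), (\ref{eq: strong convergence for nonergodic setting 7}) and (\ref{eq: strong convergence for nonergodic setting 9}). The last bound cancels $-\frac{\mu\rho}{2}\sum\theta_k^2\|v_{k+1}-s_{k+1}\|^2$ against part of the left-hand sum, and I simply drop the nonnegative remainder. This gives, for every fixed deterministic $\lambda\in U$,
\begin{equation*}
\theta_{K-1}^{2}\bigl(f(u_K)+g(z_K)-f(u^{\ast})-g(z^{\ast})-\langle\lambda,u_K-z_K\rangle\bigr)
\le \frac{\phi_{\theta_{2\kappa}}}{2}\sum_{k=0}^{K-1}\|\nabla f(v_k)-G_k\|^2+\sum_{k=0}^{K-1}\theta_k\langle\nabla f(v_k)-G_k,u_k-u^{\ast}\rangle+\frac{L\nu\theta_{2\kappa-1}^2}{2}+\frac{\|\lambda_0-\lambda\|^2}{2\mu\rho}.
\end{equation*}

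The main obstacle is obtaining a bound on $\mathbb{E}\|u_K-z_K\|$ rather than only on $\langle\lambda,u_K-z_K\rangle$ for a fixed $\lambda$. The natural choice $\lambda=-c\,(u_K-z_K)/\|u_K-z_K\|$ is random and depends on $\Xi_{K-1}$. To handle this, I first bound the right-hand side so that its dependence on $\lambda$ is deterministic, namely $\|\lambda_0-\lambda\|^2\le 2\|\lambda_0\|^2+2c^2$ whenever $\|\lambda\|\le c$. The inequality above holds pathwise for every $\lambda$, since Lemma~\ref{lemma: one iteration} and the subsequent sums are deterministic identities and inequalities along each sample path. So I can take the supremum over $\|\lambda\|\le c$ before taking expectations, which yields the term $c\|u_K-z_K\|$ on the left. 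The stochastic terms on the right do not involve $\lambda$, so taking expectations afterwards is legitimate. By (\ref{eq: strong convergence for nonergodic setting 14}), (\ref{eq: strong convergence for nonergodic setting 16}) and (\ref{eq: batch size bound}), their expectation is at most $\frac{\phi_{\theta_{2\kappa}}\varrho^2}{2M}$. With $\theta_{K-1}\ge K/2$ this gives
\begin{equation*}
\mathbb{E}\bigl[f(u_K)+g(z_K)-f(u^{\ast})-g(z^{\ast})+c\|u_K-z_K\|\bigr]\le \frac{4}{K^2}\Bigl(\frac{\phi_{\theta_{2\kappa}}\varrho^2}{2M}+\frac{L\nu\theta_{2\kappa-1}^2}{2}+\frac{c^2+\|\lambda_0\|^2}{\mu\rho}\Bigr)=\frac{C_1}{K^2}.
\end{equation*}

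It remains to split this combined bound. Write $\Delta_K=f(u_K)+g(z_K)-f(u^{\ast})-g(z^{\ast})$. The variational inequality (\ref{eq: VI of reformulated MP}), used with convexity of $f$, gives $\Delta_K\ge\langle\lambda^{\ast},u_K-z_K\rangle\ge-\|\lambda^{\ast}\|\|u_K-z_K\|$. Since $c\ge2\|\lambda^{\ast}\|$, it follows that $\Delta_K+c\|u_K-z_K\|\ge \frac{c}{2}\|u_K-z_K\|\ge0$. Hence $\mathbb{E}\|u_K-z_K\|\le 2C_1/(cK^2)$, and $\mathbb{E}[\Delta_K]\le \mathbb{E}[\Delta_K+c\|u_K-z_K\|]\le C_1/K^2$, which proves both bounds. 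The only delicate point to double-check is that every step before the expectation was genuinely pathwise in $\lambda$, so that the supremum can be taken before expectation. The constants should also be matched to those in $C_1$.
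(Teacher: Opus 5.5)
Your proposal is correct and follows the paper's proof step for step. It uses the same pathwise inequality (\ref{eq: convergence rate for nonergodic setting 0}), the same use of (\ref{eq: strong convergence for nonergodic setting 14}), (\ref{eq: strong convergence for nonergodic setting 16}), (\ref{eq: batch size bound}) and $\theta_{K-1}\geq K/2$, and the same splitting via (\ref{eq: VI of reformulated MP}), and your constants reproduce $C_1$ exactly.

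The one place you deviate is the order of the supremum over $\lambda$ and the expectation, and your order is the right one. The paper takes expectations at a fixed deterministic $\lambda$ to get (\ref{eq: convergence rate for nonergodic setting 1}), and only then takes the supremum over $\|\lambda\|\leq c$. Strictly, that yields $\sup_{\|\lambda\|\leq c}\mathbb{E}[-\langle\lambda,u_K-z_K\rangle]=c\,\|\mathbb{E}[u_K-z_K]\|$. This can be strictly smaller than $c\,\mathbb{E}\|u_K-z_K\|$, so (\ref{eq: convergence rate for nonergodic setting 2}), and hence the feasibility bound, does not follow as written. You instead observe that (\ref{eq: convergence rate for nonergodic setting 0}) holds along every sample path for every $\lambda\in U$. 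You can therefore insert the random $\lambda=-c(u_K-z_K)/\|u_K-z_K\|$ (or take the supremum pathwise), bound $\|\lambda_0-\lambda\|^2\leq 2\|\lambda_0\|^2+2c^2$ uniformly, and only afterwards take expectations of the $\lambda$-free stochastic terms. That is exactly what is needed to control $\mathbb{E}\|u_K-z_K\|$. Your pathwise split $\Delta_K+c\|u_K-z_K\|\geq\frac{c}{2}\|u_K-z_K\|\geq 0$ then finishes cleanly. So your version repairs a step the paper glosses over.

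A minor point on the first step concerns the telescoping in (\ref{eq: strong convergence for nonergodic setting 9}). If you redo it, the bound comes out with a plus sign, $+\frac{\mu\rho}{2}\sum_{k}\theta_k^2\|v_{k+1}-s_{k+1}\|^2$. The minus in the paper's last line is a slip; compare the factor $\rho(1-\mu)$ in (\ref{eq: strong convergence for nonergodic setting 13}). So the remainder you drop is $\frac{\rho(1-\mu)}{2}\sum_{k}\theta_k^2\|v_{k+1}-s_{k+1}\|^2$, which is nonnegative because $\mu<1$. Your phrase ``cancels against part of the left-hand sum'' is right in substance, but the sign you quote is not.
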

\begin{proof}
	By substituting (\ref{eq: strong convergence for nonergodic setting 6}), (\ref{eq: strong convergence for nonergodic setting 7}) and (\ref{eq: strong convergence for nonergodic setting 9}) into (\ref{eq: strong convergence for nonergodic setting 3}), it follows that
	\begin{equation}\label{eq: convergence rate for nonergodic setting 0}
		\begin{aligned}
			&
			\theta_{K-1}^{2}\left( f(u_{K}) + g(z_{K}) - f(u^{\ast}) - g(z^{\ast}) 
			-\left\langle \lambda, u_{K}-z_{K} \right\rangle \right)  \\
			\leq\,
			&
			\frac{\phi_{\theta_{2\kappa}}}{2} \sum_{k=0}^{K-1} \left\|\nabla f(v_{k})-G_{k}\right\|^{2} 
			+ \sum_{k=0}^{K-1} \theta_{k} \left\langle \nabla f(v_{k})-G_{k}, u_{k}-u^{\ast}\right\rangle 
			+ \frac{L \nu \theta_{2\kappa-1}^{2}}{2} 
			+ \frac{1}{2 \mu \rho}\left\|\lambda_{0}-\lambda\right\|^{2} .
			\vspace{-5pt}
		\end{aligned}
	\end{equation}
	Then, taking expectation over both sides of the above inequality, and using the fact $\theta_{k-1}\geq k/2$ we have from (\ref{eq: strong convergence for nonergodic setting 14}) and (\ref{eq: strong convergence for nonergodic setting 16}) that
	\begin{equation}\label{eq: convergence rate for nonergodic setting 1}
		% \begin{aligned}
			% &
			\mathbb{E}\left[  f(u_{K}) + g(z_{K}) - f(u^{\ast}) - g(z^{\ast}) 
			-\left\langle \lambda, u_{K}-z_{K} \right\rangle \right] \\
			\leq
			% &
			\frac{1}{K^2}\left( 
			\frac{2\varrho^{2}\phi_{\theta_{2\kappa}}}{M}
			+ 2L\nu \theta_{2\kappa-1}^{2} 
			+ \frac{2}{\mu\rho}\left\|\lambda_{0}-\lambda\right\|^{2} 
			\right).
		% \end{aligned}
	\end{equation}
	Clearly, taking the supremum on both sides of (\ref{eq: convergence rate for nonergodic setting 1}) over $\|\lambda\| \leq c$ gives
	\vspace{-5pt}
	\begin{equation}\label{eq: convergence rate for nonergodic setting 2}
		\mathbb{E}\left[  f(u_{K}) + g(z_{K}) - f(u^{\ast}) - g(z^{\ast}) \right]
		+c \mathbb{E}\left[ \left\| u_{K}-z_{K}\right\|  \right]
		\leq
		\frac{C_{1}}{K^{2}},
		\vspace{-5pt}
	\end{equation} 
	and we get the first results in (\ref{eq: convergence rate for nonergodic strongly convex setting}). Moreover, for the saddle point $(u^{\ast},z^{\ast},\lambda^{\ast})$, it follows from (\ref{eq: convergence rate for nonergodic setting 2}) that
	\vspace{-10pt}
	\begin{equation}\label{eq: convergence rate for nonergodic setting 3}
		\begin{aligned}
			c \mathbb{E}\left[\left\| u_{K}-z_{K}\right\|  \right] 
			\leq\,
			&
			\mathbb{E}\left[ f(u^{\ast}) + g(z^{\ast}) - f(u_{K}) - g(z_{K}) \right] 
			+ \frac{C_{1}}{K^{2}} \\
			\leq\,
			& 
			\mathbb{E}\left[-\left\langle \lambda^{\ast}, u_{K}-z_{K} \right\rangle \right]
			+ \frac{C_{1}}{K^{2}} \\
			\leq \,
			&
			\frac{c}{2}\mathbb{E}\left[\left\| u_{K}-z_{K}\right\|  \right] 
			+ \frac{C_{1}}{K^{2}},
			\vspace{-5pt}
		\end{aligned}
	\end{equation}
	with $c>2\|\lambda^{\ast}\|$, which implies the second results in (\ref{eq: convergence rate for nonergodic strongly convex setting}). The proof is therefore complete.
	%
	%\par\hspace{1ex}	
\end{proof}

\subsection{General convex case}
\label{subsec: nonergodic convergence convex case}
In this subsection, we consider the general convex case, i.e., $\alpha = 0$, and illustrate the efficiency of our algorithmic framework by providing the fast and nonergodic rate of convergence results.  
We set the parameters to  
\begin{equation}\label{eq: nonergodic parameters in convex case}
	\rho_{k} = \rho,\enspace
	\eta_{k} = \eta,\enspace \text{and} \enspace
	\theta_{k}= k+1,
\end{equation}
where the positive constants satisfy $\eta >\mu\rho/(1-\mu) + L$. Now we present the nonergodic convergence rate for Algorithm~\ref{alg:Stochastic Linearized ADMM} in the general convex case.

\begin{theorem}\label{theorem: nonergodic convergence rate in convex case}
	Let the sequence $\left\lbrace  (z_{k}, u_{k}, \lambda_{k}) \right\rbrace$ be generated by Algorithm~\ref{alg:Stochastic Linearized ADMM} with the parameter settings (\ref{eq: batch size bound}) and (\ref{eq: nonergodic parameters in convex case}).	Under Assumptions~\ref{assumption: strongly convex and Lipschitz continuous} and \ref{assumption: unbiased estimate and bounded variance}, it holds that for any $c\geq 2\|\lambda^{\ast}\|$
	\vspace{-10pt}
	\begin{equation}\label{eq: convergence rate of nonergodic convex setting}
		\mathbb{E} \left[ f(u_{K}) + g(z_{K}) - f(u^{\ast})- g(z^{\ast}) \right]
		\leq \frac{C_{2}}{K}, 
		\enspace \text{and~} \enspace
		\mathbb{E} \left[ \| u_{K} - z_{K} \| \right]
		\leq \frac{2C_{2}}{cK},
		\vspace{-10pt}
	\end{equation}	
	where
	\vspace{-10pt}
	\begin{equation*}
		C_{2} = \frac{\varrho^{2}\phi_{1}}{2M} 
		+ \frac{\eta+\rho}{2} \left\|u_{0}-u^{\ast}\right\|^{2} 
		+ \frac{c^{2} + \|\lambda_{0}\|^{2} }{\mu\rho}.
		\vspace{-5pt}
	\end{equation*}
\end{theorem}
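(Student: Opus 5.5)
The plan is to mirror the proof of Theorem~\ref{theorem: convergence rate for nonergodic setting}, now with $\theta_k=k+1$, $\rho_k=\rho$, $\eta_k=\eta$ and $\alpha=0$.

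\emph{Step 1: weighted telescoping.} Since $\theta_k^{-1}=1/(k+1)$, the convex-combination updates (\ref{eq: u-update}) and (\ref{eq: z-update}) give $(k+1)u_{k+1}=k u_k+v_{k+1}$, and likewise for $z_{k+1}$. By convexity of $f$ and $g$ (Jensen), for any fixed $\lambda$ we get
\[
(k+1)\Phi_\lambda(u_{k+1},z_{k+1})\le k\,\Phi_\lambda(u_k,z_k)+\Phi_\lambda(v_{k+1},s_{k+1}),
\]
where $\Phi_\lambda(u,z)=f(u)+g(z)-f(u^\ast)-g(z^\ast)-\langle\lambda,u-z\rangle$. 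Summing from $k=0$ to $K-1$ gives $K\,\Phi_\lambda(u_K,z_K)\le\sum_{k=0}^{K-1}\Phi_\lambda(v_{k+1},s_{k+1})$. The weights on the right are now unweighted, so Lemma~\ref{lemma: one iteration} can be applied directly with constant $\rho,\eta$ and $\alpha=0$.

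\emph{Step 2: controlling the lemma's terms.}
\begin{itemize}
\item The $\frac{\eta+\rho}{2}\|v_k-u^\ast\|^2$ terms telescope to at most $\frac{\eta+\rho}{2}\|v_0-u^\ast\|^2=\frac{\eta+\rho}{2}\|u_0-u^\ast\|^2$.
\item The $\frac1{2\mu\rho}\|\psi_k-\lambda\|^2$ terms telescope to at most $\frac1{2\mu\rho}\|\lambda_0-\lambda\|^2$, assuming $\psi_0=\lambda_0$.
\item For the noise, split $\langle\nabla f(v_k)-G_k,v_{k+1}-u^\ast\rangle=\langle\cdot,v_{k+1}-v_k\rangle+\langle\cdot,v_k-u^\ast\rangle$. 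Young's inequality absorbs the first piece into the negative quadratic, whose coefficient is $\frac{\eta(1-\mu)-\mu\rho-L(1-\mu)}{2(1-\mu)}>0$ by the parameter condition. This leaves $\frac{\phi_1}{2}\|\nabla f(v_k)-G_k\|^2$.
\end{itemize}

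\emph{Step 3: the coupling term.} This is the main obstacle: the term $\langle\lambda_k-\psi_k,v_{k+1}-s_{k+1}\rangle$, where $\lambda_k-\psi_k=-\mu\rho\theta_{k-1}(u_k-z_k)=-\mu\rho k(u_k-z_k)$. I would write $v_{k+1}-s_{k+1}=(k+1)w_{k+1}-k w_k$ with $w_k=u_k-z_k$, and use $2\langle a,b-a\rangle\le\|b\|^2-\|a\|^2$ with $a=k w_k$ and $b=(k+1)w_{k+1}$. This gives
\[
-\mu\rho\,\langle k w_k,(k+1)w_{k+1}-kw_k\rangle\le -\tfrac{\mu\rho}{2}\bigl(\|(k+1)w_{k+1}\|^2-\|kw_k\|^2\bigr)+\tfrac{\mu\rho}{2}\|v_{k+1}-s_{k+1}\|^2 .
\]
The first part telescopes to a nonpositive quantity. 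The second part is dominated by the $-\frac{\rho}{2}\|v_{k+1}-s_{k+1}\|^2$ on the left of the lemma, since $\mu<1$. The sign and index bookkeeping here is where I would be most careful, and it may require the analogue of (\ref{eq: strong convergence for nonergodic setting 9}).

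\emph{Step 4: expectations and the final bounds.} Take expectations. By (\ref{eq: strong convergence for nonergodic setting 14}) the cross term with $v_k-u^\ast$ vanishes, because $v_k$ is $\mathcal F_k$-measurable. By (\ref{eq: strong convergence for nonergodic setting 16}) together with (\ref{eq: batch size bound}), the variance sum is at most $\frac{\phi_1\varrho^2}{2M}$. This yields
\[
K\,\mathbb E[\Phi_\lambda(u_K,z_K)]\le\frac{\phi_1\varrho^2}{2M}+\frac{\eta+\rho}{2}\|u_0-u^\ast\|^2+\frac{\|\lambda_0-\lambda\|^2}{2\mu\rho}.
\]
Using $\|\lambda_0-\lambda\|^2\le2(c^2+\|\lambda_0\|^2)$ and taking the supremum over $\|\lambda\|\le c$ (choosing $\lambda=-c\,(u_K-z_K)/\|u_K-z_K\|$ pointwise, which is legitimate because the bound is uniform in $\lambda$) gives
\[
\mathbb E[f(u_K)+g(z_K)-f(u^\ast)-g(z^\ast)]+c\,\mathbb E\|u_K-z_K\|\le C_2/K.
\]
The first claim follows immediately. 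The feasibility bound follows exactly as in (\ref{eq: convergence rate for nonergodic setting 3}): by (\ref{eq: VI of reformulated MP}) the objective gap is at least $-\|\lambda^\ast\|\,\|u_K-z_K\|\ge -\frac c2\|u_K-z_K\|$, so $\frac c2\mathbb E\|u_K-z_K\|\le C_2/K$.
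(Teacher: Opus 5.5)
Your proposal is correct and follows essentially the same route as the paper's proof: Jensen-weighted telescoping with weights $k$, Lemma~\ref{lemma: one iteration} with constant $\rho,\eta$, Young's inequality producing $\phi_1$, the coupling identity from $v_{k+1}-s_{k+1}=(k+1)(u_{k+1}-z_{k+1})-k(u_k-z_k)$, the martingale and variance bounds, and the variational-inequality step for feasibility. Two of your steps are in fact the consistent versions of what the paper writes: you split the noise term through $v_k$, and you absorb $+\frac{\mu\rho}{2}\|v_{k+1}-s_{k+1}\|^2$ into $-\frac{\rho}{2}\|v_{k+1}-s_{k+1}\|^2$ using $\mu<1$.

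One small fix in Step 3: the relation you need is the identity $2\langle a,b-a\rangle=\|b\|^2-\|a\|^2-\|b-a\|^2$, and your display satisfies it with equality. The inequality you cite, $2\langle a,b-a\rangle\le\|b\|^2-\|a\|^2$, points the wrong way once multiplied by $-\mu\rho$.
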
 
\begin{proof}
	With the parameters given in (\ref{eq: nonergodic parameters in convex case}), and by using essentially the same arguments as for (\ref{eq: strong convergence for nonergodic setting 3}), it follows that 
	
	\begin{equation}\label{eq: nonergodic convergence rate in convex case 1}
		\begin{aligned}
			&\theta_{K-1}\left( f(u_{K}) + g(z_{K}) - f(u^{\ast}) - g(z^{\ast}) 
			- \left\langle \lambda, u_{K}-z_{K} \right\rangle \right)  
			\\
			\leq
			&
			\sum_{k=0}^{K-1}
			\left\langle \nabla f(v_{k})-G_{k}, v_{k+1}-u_{k}\right\rangle 
			+
			\left(\frac{L}{2}+\frac{\rho}{2(1-\mu)} -\frac{\eta+\rho}{2}\right)
			\left\|v_{k+1}-v_{k}\right\|^{2}\\
			&
			+
			\sum_{k=0}^{K-1}
			\frac{\eta+\rho}{2}\left( 
			\left\|v_{k}-u^{\ast}\right\|^{2} - \left\|v_{k+1}-u^{\ast}\right\|^{2} \right) 
			+
			\frac{1}{2 \mu \rho}\left\|\lambda_{0}-\lambda\right\|^{2} \\
			&
			+
			\sum_{k=0}^{K-1} \left\langle \nabla f(v_{k})-G_{k}, u_{k}-u^{\ast}\right\rangle
			+
			\sum_{k=0}^{K-1} 
			\left\langle \lambda_{k}-\psi_{k},v_{k+1}-s_{k+1}\right\rangle 
			% \\
			% &
			- 
			\sum_{k=0}^{K-1} \frac{\rho}{2}\left\|v_{k+1}-s_{k+1}\right\|^{2} .
		\end{aligned}
	\end{equation}
	Using Young's inequality, we get that
	\begin{equation}\label{eq: nonergodic convergence rate in convex case 2}
				% \begin{aligned}
				% &
			\left\langle \nabla f(v_{k})-G_{k}, v_{k+1}-u_{k}\right\rangle 
			+
			\left(\frac{L}{2}+\frac{\rho}{2(1-\mu)} -\frac{\eta+\rho}{2}\right)\left\|v_{k+1}-v_{k}\right\|^{2}
						% \\
			\leq
				% &\,
			\frac{\phi_{1}}{2}\left\|\nabla f(v_{k})-G_{k}\right\|^{2} 
					% \end{aligned}
	\end{equation}
	with $\eta >\mu\rho/(1-\mu) + L$. From (\ref{eq: u-update}), (\ref{eq: z-update}), and  (\ref{eq: nonergodic parameters in convex case}), we use the similar argument for deriving (\ref{eq: strong convergence for nonergodic setting 9}) to obtain that
	\begin{equation}\label{eq: nonergodic convergence rate in convex case 3}
		\begin{aligned}
			&
			\sum_{k=0}^{K-1} 
			\left\langle \lambda_{k}-\psi_{k},v_{k+1}-s_{k+1}\right\rangle
			=
			\sum_{k=0}^{K-1} -\mu\rho (\theta_{k}-1)
			\left\langle u_{k}-z_{k},v_{k+1}-s_{k+1}\right\rangle\\
			= 
			& 
			- \frac{\mu\rho}{2}\sum_{k=0}^{K-1} 
			\left( 
			\theta_{k}^{2}\left\|u_{k+1}-z_{k+1}\right\|^{2} 
			-\theta_{k}^{2}(1-\theta_{k}^{-1})^{2}\left\|u_{k}-z_{k}\right\|^{2}
			- \left\|v_{k+1}-s_{k+1}\right\|^{2} 
			\right) \\
			% = 
			% & 
			% {\color{red}- \frac{\mu\rho}{2}\sum_{k=0}^{K-1}
			% \left( 
			% \theta_{k}^{2}\left\|u_{k+1}-z_{k+1}\right\|^{2} 
			% -\theta_{k-1}^{2}\left\|u_{k}-z_{k}\right\|^{2}
			% - \left\|v_{k+1}-s_{k+1}\right\|^{2}
			% \right) }\\
			\leq 
			& 
			-\frac{\mu\rho}{2}\sum_{k=0}^{K-1}\left\|v_{k+1}-s_{k+1}\right\|^2.
		\end{aligned}
	\end{equation}
	Furthermore, we have from $u_{0}=v_{0}$
	\begin{equation}\label{eq: nonergodic convergence rate in convex case 4}
		\begin{aligned}
			&\sum_{k=0}^{K-1}
			\left( 
			\frac{\eta+\rho}{2}\left( 
			\left\|v_{k}-u^{\ast}\right\|^{2} - \left\|v_{k+1}-u^{\ast}\right\|^{2} \right) \right)
			\leq
			\frac{\eta+\rho}{2} \left\|u_{0}-u^{\ast}\right\|^{2}.
		\end{aligned}
	\end{equation}
	Substituting (\ref{eq: nonergodic convergence rate in convex case 2}), (\ref{eq: nonergodic convergence rate in convex case 3}), and (\ref{eq: nonergodic convergence rate in convex case 4}) into (\ref{eq: nonergodic convergence rate in convex case 1}) gives
	\begin{equation}\label{eq: nonergodic convergence rate in convex case 5}
		\begin{aligned}
			&
			\theta_{K-1}\left( f(u_{K}) + g(z_{K}) - f(u^{\ast}) - g(z^{\ast}) 
			-\left\langle \lambda, u_{K}-z_{K} \right\rangle \right)  \\
			\leq \,
			&
			\frac{\phi_{1}}{2} \sum_{k=0}^{K-1} \left\|\nabla f(v_{k})-G_{k}\right\|^{2} 
			+ \sum_{k=0}^{K-1}\left\langle \nabla f(v_{k})-G_{k}, u_{k}-u^{\ast}\right\rangle 
			% \\
			% &
			+ \frac{\eta+\rho}{2} \left\|u_{0}-u^{\ast}\right\|^{2} 
			+ \frac{1}{2 \mu \rho}\left\|\lambda_{0}-\lambda\right\|^{2}.
		\end{aligned}
	\end{equation}
	Then, by taking expectation on both sides of the above inequality, it follows from (\ref{eq: strong convergence for nonergodic setting 14}) and (\ref{eq: strong convergence for nonergodic setting 16}) that 
	\begin{equation}\label{eq: nonergodic convergence rate in convex case 6}
		\begin{aligned}
			&
			\mathbb{E}\left[  f(u_{K}) + g(z_{K}) - f(u^{\ast}) - g(z^{\ast}) 
			-\left\langle \lambda, u_{K}-z_{K} \right\rangle \right] 
			\\
			\leq\,
			&
			\frac{1}{K}\left( 
			\frac{\varrho^{2}\phi_{1}}{2M} 
			+ \frac{\eta+\rho}{2} \left\|u_{0}-u^{\ast}\right\|^{2} 
			+ \frac{1}{2\mu\rho}\left\|\lambda_{0}-\lambda\right\|^{2} 
			\right).
		\end{aligned}
	\end{equation}
	
	Letting $\lambda = \lambda^{\ast}$ in the above inequality, the desired results follow from the same arguments as we presented at the end of the proof of Theorem~\ref{theorem: convergence rate for nonergodic setting}.	
	%
	%\par\hspace{1ex}	 
\end{proof}

\begin{remark}
	In the general convex case, the work of \cite{fang2017faster} has shown that the lower complexity bound of ADMM-type methods for the separable equality constrained
	nonsmooth convex problems is exactly $O(1/K)$, which indicates that the nonergodic result of our algorithm is optimal.
\end{remark}

\section{Application to PDE-constrained optimization under uncertainty}
\label{sec: application to PDE-constrained optimization under uncertainty}
With the proven convergence results, it is promising to consider the algorithmic framework for various applications, especially certain model problems in PDE-constrained optimization, including the optimal control of elliptic equations whose coefficients are subject to uncertainty. Besides providing convergence rates to estimate the efficiency of the algorithmic framework, in this section we complement the results with bounds on the probabilities of large deviations for the application of our framework to the model problem of elliptic optimal control.

\subsection{Elliptic optimal control problem}
\label{subsec: random elliptic PDE-constrained optimization}
In this subsection, we consider the optimal sparse distributed control of elliptic PDE with random coefficients.
%arising from
%\subsubsection{Model problem and large-deviation properties}
The model problem is given by 
\vspace{-10pt}
\begin{equation}\label{eq: elliptic optimal control problem}
	\begin{aligned}
		\underset{u\in U_{ad}}{\min}~ 
		\mathbb{E}\left[\frac{1}{2}\left\| y(u,\xi(\omega)) - y_{d}\right\|_{L^{2}(D)}^{2} \right] + \frac{\alpha}{2}\left\| u \right\|_{L^{2}(D)}^{2} + \beta \left\| u \right\|_{L^{1}(D)},  
	\vspace{-5pt}
\end{aligned}
\end{equation}
with $y(u,\xi(\omega))$ being the solution to the following elliptic state equation 
\vspace{-2pt}
\begin{equation}\label{eq: elliptic state equation}
	\left\{
	\begin{aligned}
		-\nabla \cdot\left(a(x, \omega) \nabla y(x, \omega)\right) & =u(x), & & (x, \omega) \in D \times \Omega, \\
		y(x, \omega) & =0, & & (x, \omega) \in \partial D \times \Omega,
	\end{aligned}
	\right.
	\vspace{-3pt}
\end{equation}
where the physical domain $D \subseteq \mathbb{R}^{d}(d \geq 1)$ is a bounded Lipschitz domain with boundary $\partial D$, and the diffusion coefficient $a: D \times \Omega \rightarrow \mathbb{R}$ is a uniformly bounded and positive random field, i.e., there exist two constants $a_{\min}, a_{\max}$ such that for all $(x,\omega) \in D\times \Omega$,
\vspace{-2pt}
\begin{equation}\label{eq: uniformly bounded and positive random field}
	0 < a_{\min} < a(x,\omega) <a_{\max} < \infty.
	\vspace{-3pt}
\end{equation}
The admissible set $U_{ad}$ is defined by
\vspace{-2pt}
\begin{equation}\label{eq: elliptic admissible set}
	U_{ad} = 
	\left\lbrace u\in L^{2}(D); -\infty < u_{\min} \leq u(x) \leq u_{\max} < +\infty,~a.e.~x \in  D\right\rbrace, 
	\vspace{-3pt}
\end{equation}
where $ u_{\min},  u_{\max}$ are two given constants. 
%The existence and uniqueness of the solution to problem (\ref{eq: elliptic optimal control problem}) in the deterministic case were shown in [~]. For the stochastic case, the same can be proved in a similar way as discussed in [~], and more details can be found in [~]. 

To proceed with discussions, we introduce 
\vspace{-2pt}
\begin{equation}\label{eq: special elliptic case}
	F(u,\xi)  
	= \frac{1}{2}\left\| y(u,\xi(\omega)) - y_{d}\right\|_{L^{2}(D)}^{2} + \frac{\alpha}{2}\left\| u \right\|_{L^{2}(D)}^{2},
	\enspace
	g(u) = \beta \left\| u \right\|_{L^{1}(D)},
	\vspace{-3pt}
\end{equation} 
then it is clear that the problem (\ref{eq: elliptic optimal control problem}) is a special case of (\ref{eq: general model problem}) with $f(u) 
= \mathbb{E}\left[ F(u,\xi) \right] $. By the standard argument (see, e.g., \cite{martin2019stochastic}), the functional $f$ satisfies the $\alpha$-strong convexity and the gradient Lipschitz continuity with constant $L = \alpha + C_{P}^{4}/a_{\min}^{2}$, where $C_{P}>0$ is the Poincar\'{e} constant depending only on the domain $D$. For any $\omega \in \Omega$, the stochastic gradient $\nabla F(u,\xi) $ of the smooth part of the objective functional in (\ref{eq: elliptic optimal control problem}) can be calculated by 
\vspace{-2pt}
\begin{equation}\label{eq: elliptic stochastic gradient}
	\nabla F(u,\xi) = \nabla F(u,\xi(\omega)) = \alpha u + p(\cdot,\omega),
	\vspace{-3pt}
\end{equation}
where the adjoint variable $p$ is the solution to the following adjoint equation
\vspace{-2pt}
\begin{equation}\label{eq: adjoint elliptic equation}
	\left\{\begin{aligned}
		-\nabla \cdot\left(a(x, \omega) \nabla p(x, \omega)\right) & =y(x, \omega)-y_{d}(x), & & (x, \omega) \in D \times \Omega, \\
		p(x, \omega) & =0, & & (x, \omega) \in \partial D \times \Omega.
	\end{aligned}\right.
	\vspace{-3pt}
\end{equation}

Now let us look into the details of the unbiasedness and variance boundedness of the stochastic gradient $\nabla F(u,\xi)$ given in (\ref{eq: elliptic stochastic gradient}). 
%\begin{proposition}
%	
%\end{proposition}
%{\bf Unbiased stochastic gradient and bounded variance.}
Multiplying both sides of the adjoint equation (\ref{eq: adjoint elliptic equation}) by $p$ and integrating over $D$, we have 
\begin{equation}\label{eq: variation of adjoint equation}
  \begin{aligned}
a_{\min}\left\| \nabla p(\cdot,\omega) \right\|_{L^{2}(D)}^{2}
 \leq \int_{D} a(x,\omega) \nabla p(x,\omega) \cdot \nabla p(x,\omega) ~\mathrm{d}x 
= \int_{D} \left( y(x,\omega) - y_{d}(x) \right) p(x,\omega) ~\mathrm{d}x.
    \end{aligned}
\end{equation}
Using the above inequality and Poincar\'{e} inequality gives
\begin{equation}\label{eq: boundedness of p}
	\left\| p(\cdot,\omega) \right\|_{L^{2}(D)} 
	\leq 
	\frac{C_{P}^{2}}{a_{\min}} \left\| y(\cdot,\omega) - y_{d} \right\|_{L^{2}(D)}
	\leq
	\frac{C_{P}^{2}}{a_{\min}} \left( \left\| y(\cdot,\omega) \right\|_{L^{2}(D)} + \left\| y_{d} \right\|_{L^{2}(D)} \right). 
\end{equation}
Similarly, we can multiply the state equation (\ref{eq: elliptic state equation}) by $y$ on both sides and use Poincar\'{e} inequality to obtain
\vspace{-2pt}
\begin{equation}\label{eq: boundedness of y}
	\left\| y(\cdot,\omega) \right\|_{L^{2}(D)} \leq 	
	\frac{C_{P}^{2}}{a_{\min}} \left\| u\right\|_{L^{2}(D)}.
	\vspace{-3pt}
\end{equation}
Moreover, since the set $U_{ad}$ is bounded, we clearly have for any $u \in U_{ad}$
\begin{equation}\label{eq: boundedness of u}
	\left\| u \right\|_{L^{2}(D)} \leq \max\left\{ |u_{\min}|, |u_{\max}|\right\} \sqrt{|D|},
\end{equation}
and then we readily get
\vspace{-2pt}
\begin{equation}\label{eq: boundedness of SG}
	\begin{aligned}
		\left\| \nabla F(u,\xi) \right\|_{L^{2}(D)} 
		&
		\leq 
		\alpha \left\| u \right\|_{L^{2}(D)} + \left\| p(\cdot,\omega) \right\|_{L^{2}(D)} \\
		&
		\leq
		\left( \frac{C_{P}^{4}}{a_{\min}^{2}} + \alpha \right) \max\left\{ |u_{\min}|, |u_{\max}|\right\}  \sqrt{|D|} + \frac{C_{P}^{2}}{a_{\min}}\left\| y_{d} \right\|_{L^{2}(D)} .
	\end{aligned}
	\vspace{-3pt}
\end{equation}
By Lebesgue's dominated convergence theorem, it is evident for any  $u \in U_{ad}$ that 
\begin{equation}\label{eq: unbiased gradient}
	\nabla f(u) = \mathbb{E}\left[ \nabla F(u,\xi) \right] 
	= \alpha u +  \mathbb{E}\left[ p(\cdot,\omega) \right].
\end{equation}
In addition, for any $u \in U_{ad}$, using Jensen's inequality and the inequalities (\ref{eq: boundedness of p}), (\ref{eq: boundedness of y}) and (\ref{eq: boundedness of u})  yields  
\vspace{-2pt}
\begin{equation}\label{eq: bounded gradient bais}
	\begin{aligned}
		\left\| \nabla f(u) - \nabla F(u,\xi) \right\|_{L^{2}(D)}
		&
		\leq
		\left\|  \mathbb{E} \left[  p(\cdot,\omega) \right]  \right\|_{L^{2}(D)} 
		+ \left\|  p(\cdot,\omega) \right\|_{L^{2}(D)} \\
		&
		\leq
		2 \left( 
		\frac{C_{P}^{4}}{a_{\min}^{2}} \max\left\{ |u_{\min}|, |u_{\max}|\right\}  \sqrt{|D|} + \frac{C_{P}^{2}}{a_{\min}}\left\| y_{d} \right\|_{L^{2}(D)} 
		\right) \\
		&
		=
		2Q.
	\end{aligned}
	\vspace{-3pt}
\end{equation}
%then using Jensen's inequality yields 
%\begin{equation*}
%	\mathbb{E}\left[ \exp\left( 
%	\frac{\left\| \nabla f(u) - \nabla F(u,\xi) \right\|_{L^{2}(D)}^{2} }
%	{4 A }
%	\right)  \right]
%	\leq
%	\exp(1),
%\end{equation*}
which implies that
\vspace{-2pt}
\begin{equation}\label{eq: elliptic finite variance}
	\mathbb{E}\left[ \left\| \nabla f(u) - \nabla F(u,\xi) \right\|_{L^{2}(D)}^{2} \right]
	\leq
	4 Q^{2}.
	\vspace{-3pt}
\end{equation}

%{\bf Large-deviation of Algorithm~\ref{alg:Stochastic Linearized ADMM}}
%For any $k$ and $i$, we have from (\ref{eq: bounded gradient bais}) that
%\begin{equation}\label{eq: large-deviation of Alg for ellpetic MP 0}
%	\left\| \nabla f(v_{k}) - G_{k}\right\|_{L^{2}(D)}^{2}  
%	\leq
%	\frac{1}{m_{k}^{2}}\sum_{i=1}^{m_{k}}
%	\left\| \nabla f(v_{k}) - \nabla F(v_{k},\xi_{k,i})\right\|_{L^{2}(D)}^{2}  
%	\leq
%	\frac{4 Q^{2}}{m_{k}},
%\end{equation}
%
%Using Cauchy-Schwarz inequality in (\ref{eq: convergence rate for nonergodic setting 0}) and the similar argument for (\ref{eq: convergence rate for nonergodic setting 2}) and (\ref{eq: convergence rate for nonergodic setting 3}), then 

Based on the above observation, we can establish the large-deviation results associated with
Algorithm~\ref{alg:Stochastic Linearized ADMM} for solving problem (\ref{eq: elliptic optimal control problem}), which evaluates the quality of the solutions obtained by a single run.

\begin{theorem}\label{theorem: large-deviation of Alg for ellpetic MP}
	Let the sequence $\left\lbrace  (u_{k}, z_{k}, \lambda_{k}) \right\rbrace$ be generated by Algorithm~\ref{alg:Stochastic Linearized ADMM} for solving problem (\ref{eq: elliptic optimal control problem}) with the parameters preset according to either (\ref{eq: adaptive omega}), (\ref{eq: nonergodic parameters}) and (\ref{eq: batch size bound}), or  (\ref{eq: batch size bound}) and (\ref{eq: nonergodic parameters in convex case}).
	\begin{enumerate}
		\item[$\mathrm{(a)}$] 
		For the strongly convex case, i.e., $\alpha>0$, it holds that  for any $\delta >0$
		{\small 
    \begin{equation}\label{eq: large-deviation of Alg for strongly convex ellpetic MP 0}
			\begin{aligned}
				&
				\mathrm{Prob} \left\lbrace
				f(u_{K}) + g(z_{K}) - f(u^{\ast}) - g(z^{\ast}) 
				<
				\frac{C_{3}}{K^{2}} + \frac{\delta C_{4} \sqrt{\ln K}}{K} \right\rbrace 
				\geq
				1-\left( \frac{1}{K} \right)^{\frac{\delta^{2}}{3}} , \\
				&
				\mathrm{Prob} \left\lbrace
				\left\| u_{K}-z_{K}\right\|_{L^{2}(D)}
				<
				\frac{2 C_{3}}{c K^{2}} + \frac{2 \delta C_{4} \sqrt{\ln K}}{c K} \right\rbrace 
				\geq
				1-\left( \frac{1}{K} \right)^{\frac{\delta^{2}}{3}},
			\end{aligned}
		\end{equation}}
		where  
		\vspace{-5pt}
		\begin{equation*}\label{eq: large-deviation of Alg for ellpetic MP 13}
			C_{3} =
			\frac{8 \phi_{\theta_{2\kappa}} Q^{2}}{M } 
			+ 2 \sqrt{|D|} (u_{\max} - u_{\min}) (\alpha + \frac{C_{P}^{4}}{a_{\min}}) \theta_{2\kappa-1}^{2}
			+ \frac{4 \left( \left\|\lambda_{0}\right\|_{L^{2}(D)}^{2} + c^{2} \right) }{ \mu \rho},
			\vspace{-3pt}
		\end{equation*}
		and
		\vspace{-5pt}
		\begin{equation*}\label{eq: large-deviation of Alg for ellpetic MP 14}
			C_{4} =
			2 (u_{\max} - u_{\min}) Q 
			\sqrt{ \frac{ 2|D| }{ \left(1-\exp\{-2\}\right) M } }.
			\vspace{-5pt}
		\end{equation*}
		\item[$\mathrm{(b)}$] 
		For the general convex case, i.e., $\alpha=0$, it holds that for any $\delta >0$
		\vspace{-1pt}
		\begin{equation}\label{eq: large-deviation of Alg for convex ellpetic MP 0}
			\begin{aligned}
				&
				\mathrm{Prob} \left\lbrace
				f(u_{K}) + g(z_{K}) - f(u^{\ast}) - g(z^{\ast}) 
				<
				\frac{C_{5}}{K} + \frac{\delta C_{4} \sqrt{\ln K}}{K} \right\rbrace 
				\geq
				1-\left( \frac{1}{K} \right)^{\frac{\delta^{2}}{3}}, \\
				&
				\mathrm{Prob} \left\lbrace
				\left\| u_{K}-z_{K}\right\|_{L^{2}(D)}
				<
				\frac{2 C_{5}}{c K} + \frac{2 \delta C_{4} \sqrt{\ln K}}{c K} \right\rbrace 
				\geq
				1-\left( \frac{1}{K} \right)^{\frac{\delta^{2}}{3}},
			\end{aligned}
			\vspace{-5pt}
		\end{equation}
		where  \vspace{-10pt}
		\begin{equation*}\label{eq: large-deviation of Alg for ellpetic MP 17}
			C_{5} =
			\frac{2 \phi_{1} Q^{2}}{M} 
			+ \frac{\eta+\rho}{2} \left\|u_{0}-u^{\ast}\right\|^{2} 
			+ \frac{ \left\|\lambda_{0}\right\|_{L^{2}(D)}^{2} + c^{2} }{ \mu \rho}.
			\vspace{-5pt}
		\end{equation*}
	\end{enumerate}
\end{theorem}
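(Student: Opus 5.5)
Rather than taking expectations, I would work pathwise from the deterministic inequalities obtained in the proofs of Theorems~\ref{theorem: convergence rate for nonergodic setting} and \ref{theorem: nonergodic convergence rate in convex case}, namely (\ref{eq: convergence rate for nonergodic setting 0}) in case (a) and (\ref{eq: nonergodic convergence rate in convex case 5}) in case (b), with a fixed $\lambda$, $\|\lambda\|\le c$. Their right-hand sides contain two random terms. The first is the quadratic term $\sum_k\|\nabla f(v_k)-G_k\|^2$. By (\ref{eq: bounded gradient bais}) and the conditional variance computation (\ref{eq: strong convergence for nonergodic setting 16}), this term has conditional mean at most $4Q^2/m_k$. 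I would either bound it in expectation, or bound it almost surely by $4Q^2$ per sample after averaging; since the constant $8\phi_{\theta_{2\kappa}}Q^2/M$ in $C_3$ involves $1/M$, I would control its sum by a simple Markov/second-moment argument, or absorb it directly. The second random term is the martingale sum $S_K=\sum_{k=0}^{K-1}\theta_k\langle\nabla f(v_k)-G_k,u_k-u^\ast\rangle$ (with weights $\theta_k$ dropped in case (b)). The deterministic parts give $C_3/K^2$ and $C_5/K$ after dividing by $\theta_{K-1}^2\ge K^2/4$ or $\theta_{K-1}=K$, with $\nu\le |D|(u_{\max}-u_{\min})^2$ and $L=\alpha+C_P^4/a_{\min}^2$.

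The key step is a sub-Gaussian large-deviation bound for $S_K$. Each increment $\zeta_k=\theta_k\langle\nabla f(v_k)-G_k,u_k-u^\ast\rangle$ satisfies $\mathbb{E}[\zeta_k|\mathcal F_k]=0$ by (\ref{eq: strong convergence for nonergodic setting 14}). It is also bounded, since $u_k,u^\ast\in U_{ad}$ gives $\|u_k-u^\ast\|\le (u_{\max}-u_{\min})\sqrt{|D|}$, while $\|\nabla f(v_k)-G_k\|\le 2Q$ by (\ref{eq: bounded gradient bais}). Better, because $G_k$ averages $m_k$ i.i.d.\ bounded Hilbert-valued variables, $\mathbb{E}[\exp(\zeta_k^2/\sigma_k^2)|\mathcal F_k]\le \exp(1)$ with $\sigma_k^2\sim \theta_k^2(u_{\max}-u_{\min})^2|D|Q^2/m_k$. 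I would then use the standard lemma of Lan/Nemirovski et al.: for a martingale difference sequence with $\mathbb{E}[\exp(\zeta_k^2/\sigma_k^2)|\mathcal F_k]\le e$, one has $\mathrm{Prob}\{S_K>\lambda\sqrt{\sum\sigma_k^2}\}\le \exp(-\lambda^2/3)$. Choosing $\lambda=\delta\sqrt{\ln K}$ gives probability $K^{-\delta^2/3}$, which matches the exponent in the statement. Summing $\sigma_k^2$ with $\theta_k\le \theta_{K-1}$ and $\sum 1/m_k=1/M$ gives $\sqrt{\sum\sigma_k^2}\lesssim\theta_{K-1}(u_{\max}-u_{\min})Q\sqrt{|D|/M}$. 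The factor $(1-e^{-2})^{-1}$ in $C_4$ presumably comes from the sub-Gaussian constant of the batch average. Dividing by $\theta_{K-1}^2\ge K^2/4$ in (a), or by $\theta_{K-1}=K$ in (b), yields the $\delta C_4\sqrt{\ln K}/K$ term.

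On this high-probability event, I take the supremum over $\|\lambda\|\le c$ as in (\ref{eq: convergence rate for nonergodic setting 2}). This yields the objective bound, and the same saddle-point argument (\ref{eq: convergence rate for nonergodic setting 3}) with $c\ge 2\|\lambda^\ast\|$ yields the feasibility bound with factors $2/c$. That argument is pathwise, since (\ref{eq: VI of reformulated MP}) holds for every realization.

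The main obstacle is establishing the exponential-moment condition for $\zeta_k$ with variance proxy scaling like $1/m_k$, rather than the crude almost-sure bound $2Q$, which does not decay. I would first try a Hilbert-space Hoeffding/Pinelis inequality for the average of bounded i.i.d.\ centered vectors to get $\mathbb{E}\exp(\|\nabla f(v)-G_k\|^2/(cQ^2/m_k))\le e$, and then apply Cauchy–Schwarz. If that fails, I would fall back on handling the quadratic term and the martingale term jointly with Markov's inequality for the squared sum.
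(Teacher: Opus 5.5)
Your treatment of the martingale sum matches the paper's proof. The paper also uses an exponential-square moment bound with $\sigma_k^2\propto\theta_k^2/m_k$, Lemma~2 of \cite{lan2012validation}, $\epsilon=\delta\sqrt{\ln K}$, the pathwise supremum over $\|\lambda\|\le c$, and the saddle-point step. The step you call the main obstacle is done there without Hilbert-space concentration. For frozen $v,u\in U_{ad}$, the scalar $\langle\nabla f(v)-\frac{1}{m_k}\sum_i\nabla F(v,\xi_{k,i}),u-u^\ast\rangle$ is an average of $m_k$ i.i.d.\ centered real variables bounded by $2\sqrt{|D|}(u_{\max}-u_{\min})Q$. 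The scalar Hoeffding lemma then gives sub-Gaussianity with proxy of order $1/m_k$. Theorem~2.6 of \cite{wainwright2019high} with parameter $1-e^{-2}$ converts this to an exponential-square moment bounded by $e$, which is where the factor in $C_4$ comes from. A freezing lemma \cite{kallenberg2021foundations} transfers the bound to the conditional expectation given $\mathcal{F}_k$.

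The genuine gap is the quadratic term $\frac{\phi}{2}\sum_{k=0}^{K-1}\|\nabla f(v_k)-G_k\|^2$ in (\ref{eq: convergence rate for nonergodic setting 0}) and (\ref{eq: nonergodic convergence rate in convex case 5}), with $\phi=\phi_{\theta_{2\kappa}}$ or $\phi_1$. The statement charges this term the deterministic amount $2\phi Q^2/M$ at no cost in probability, and none of your options delivers that.
\begin{itemize}
\item A bound in expectation says nothing about a single run.
\item The almost-sure bound that (\ref{eq: bounded gradient bais}) and Cauchy--Schwarz actually give is $\|\nabla f(v_k)-G_k\|^2\le 4Q^2$, not $4Q^2/m_k$. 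Its sum $4KQ^2$ adds $8\phi_{\theta_{2\kappa}}Q^2/K$ in (a) and a non-vanishing $2\phi_1Q^2$ in (b).
\item Markov's inequality introduces a failure probability that is absent from $1-K^{-\delta^2/3}$.
\item The term cannot be absorbed into the martingale sum, because it is not centered.
\end{itemize}
The paper's proof gets past this point only by bounding the sum pathwise by $4Q^2/M$. In effect it uses $\|\nabla f(v_k)-G_k\|^2\le 4Q^2/m_k$, citing (\ref{eq: bounded gradient bais}) and Cauchy--Schwarz. That bound is not valid pathwise: the $1/m_k$ factor in (\ref{eq: strong convergence for nonergodic setting 16}) holds only for the conditional mean, because the cross terms vanish only in expectation. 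So neither your argument nor the paper's establishes the $Q^2/M$ parts of $C_3$ and $C_5$ together with the probability $1-K^{-\delta^2/3}$.

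A rigorous substitute comes from your own Pinelis idea, applied to the squared norm rather than to the inner product.
\begin{itemize}
\item It gives $\mathbb{E}[\exp(\|\nabla f(v_k)-G_k\|^2/\tilde\sigma_k^2)\mid\mathcal{F}_k]\le e$ with $\tilde\sigma_k^2=O(Q^2/m_k)$.
\item Convexity of $\exp$ with weights $\tilde\sigma_k^2/\sum_j\tilde\sigma_j^2$, followed by Markov's inequality, gives $\mathrm{Prob}\{\sum_k\|\nabla f(v_k)-G_k\|^2>(1+\gamma)\sum_k\tilde\sigma_k^2\}\le e^{-\gamma}$.
\item A union bound with the martingale event yields a valid result with probability at least $1-K^{-\delta^2/3}-e^{-\gamma}$. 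The $Q^2/M$ terms are replaced by $O((1+\gamma)Q^2/M)$; for example, take $\gamma=\delta^2\ln K/3$.
\end{itemize}
That is a corrected version of the theorem, not the statement as written.
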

\begin{proof}
	For any $v,u \in U_{ad}$, it follows from (\ref{eq: bounded gradient bais}) that
	%	\begin{equation}\label{eq: large-deviation of Alg for ellpetic MP 1}
		%		\left\| \nabla f(v_{k}) - \nabla F(v_{k},\xi_{k,i})\right\|_{L^{2}(D)}  
		%		\leq
		%		2 Q,
		%	\end{equation}
	%	and
	\vspace{-2pt}
	\begin{equation}\label{eq: large-deviation of Alg for ellpetic MP 2}
		\begin{aligned}
			\left| \left\langle \nabla f(v) - \nabla F(v,\xi) , u-u^{\ast}  \right\rangle_{L^{2}(D)}  \right| 
			&
			=
			\left| \int_{D} \left( \nabla f(v) - \nabla F(v,\xi) \right) \left( u-u^{\ast} \right)  ~\mathrm{d}x \right| \\
			&
			\leq 	
			2 \sqrt{|D|} (u_{\max} - u_{\min})  Q.
		\end{aligned}
		\vspace{-3pt}
	\end{equation}
	Denoting \vspace{-5pt}
	\begin{equation*}\label{eq: large-deviation of Alg for ellpetic MP 3}
		\Delta (v,u,\Xi_{k}) = \left\langle 
		\nabla f(v) - \sum_{i=1}^{m_{k}} \nabla F(v,\xi_{k,i}) , u-u^{\ast}  \right\rangle_{L^{2}(D)},
		\vspace{-5pt}
	\end{equation*}
	then by Hoeffding lemma and the independence of the realizations $\Xi_{k}$, we obtain from (\ref{eq: unbiased gradient}) and (\ref{eq: large-deviation of Alg for ellpetic MP 2}) that for any $v,u \in U_{ad}$ and $s>0$
	\vspace{-2pt}
	\begin{equation*}\label{eq: large-deviation of Alg for ellpetic MP 4}
		\begin{aligned}
			\mathbb{E}\left[ \exp\{ s\Delta (v,u,\Xi_{k}) \}  \right] 
			&
			= 
			\mathbb{E}\left[ 
			\exp\left\lbrace 
			\frac{s}{m_{k}} \sum_{i=1}^{m_{k}} \left\langle \nabla f(v) - \nabla F(v,\xi_{k,i}) , u-u^{\ast}  \right\rangle_{L^{2}(D)} \right\rbrace 
			\right] 
			\\
			&
			=
			\prod_{i=1}^{m_{k}} \mathbb{E}\left[  
			\exp\left\lbrace 
			\frac{s}{m_{k}} \left\langle \nabla f(v) - \nabla F(v,\xi_{k,i}) , u-u^{\ast}  \right\rangle_{L^{2}(D)} \right\rbrace 
			\right] \\
			&
			\leq
			\exp\left\lbrace \frac{ s^{2} |D| (u_{\max} - u_{\min})^{2} Q^{2} }{ 2 m_{k} } \right\rbrace, 
		\end{aligned}
		\vspace{-3pt}
	\end{equation*}
	which implies that the random variable $\Delta (v,u,\Xi_{k})$ is sub-Gaussian. From Theorem~2.6 of \cite{wainwright2019high}, we further get for any $v,u \in U_{ad}$
	\begin{equation}\label{eq: large-deviation of Alg for ellpetic MP 5}
		\mathbb{E}\left[ 
		\exp \left\lbrace  \frac{ \left(1-\exp\{-2\}\right) m_{k} \Delta (v,u,\Xi_{k})^{2} }{ 2 |D| (u_{\max} - u_{\min})^{2} Q^{2} } \right\rbrace   
		\right] 
		\leq\exp\{1\}.
	\end{equation}
	Since $v_{k},  u_{k} \in U_{ad}$ are $\mathcal{F}_{k}$-measurable and $\Xi_{k}$ is independent of $\Xi_{0},\cdots\Xi_{k-1}$, we obtain from Theorem 8.5 of \cite{kallenberg2021foundations} and (\ref{eq: large-deviation of Alg for ellpetic MP 5}) that
	\begin{equation}
		\begin{aligned}
			&\mathbb{E}\left[ \left. 
			\exp \left\lbrace  \frac{ \left(1-\exp\{-2\}\right) m_{k} \Delta (v_{k},u_{k},\Xi_{k})^{2} }{ 2 |D| (u_{\max} - u_{\min})^{2} Q^{2} } \right\rbrace   \right| \mathcal{F}_{k}
			\right]\\
			=
			& 
			\left. 
			\mathbb{E}\left[
			\exp \left\lbrace  \frac{ \left(1-\exp\{-2\}\right) m_{k} \Delta (v,u,\Xi_{k})^{2} }{ 2 |D| (u_{\max} - u_{\min})^{2} Q^{2} } \right\rbrace 
			\right]	\right|_{v=v_{k}, u=u_{k}} \\
			\leq
			&
			\exp\{1\}.
		\end{aligned}
	\end{equation}
	Also note that from (\ref{eq: strong convergence for nonergodic setting 14}), $\Delta (v_{k},u_{k},\Xi_{k})$ is the martingale-difference. Therefore, from the large-deviation theorem on the martingale-difference (see, e.g. Lemma~2 of \cite{lan2012validation}), we have for any $\epsilon >0$ that
    \begin{equation*}\label{eq: large-deviation of Alg for ellpetic MP 7}
		\text{Prob}\left\lbrace 
		\sum_{k=0}^{K-1} \theta_{k}\Delta (v_{k},u_{k},\Xi_{k}) > 
		\epsilon (u_{\max} - u_{\min}) Q \sqrt{ \frac{ 2 |D| }{1-\exp\{-2\}} \sum_{k=0}^{K-1}  \frac{\theta_{k}^{2}}{m_{k}} }
		\right\rbrace  
		\leq
		\exp\{-\frac{\epsilon^{2}}{3}\}.
	\end{equation*}
	Noting that $\theta_{k+1} > \theta_{k}$ for all $k$, we obtain
  \begin{equation}\label{eq: large-deviation of Alg for ellpetic MP 8}
		\text{Prob}\left\lbrace 
		\frac{1}{\theta_{K-1}^{2}}\sum_{k=0}^{K-1} \theta_{k}\Delta (v_{k},u_{k},\Xi_{k}) 
		< 
		\frac{\epsilon (u_{\max} - u_{\min}) Q}{\theta_{K-1}} 
		\sqrt{ \frac{ 2 |D| }{ \left(1-\exp\{-2\}\right) M } }
		\right\rbrace  
		\geq
		1-\exp\{-\frac{\epsilon^{2}}{3}\}.
	\end{equation}
	By using (\ref{eq: bounded gradient bais}) and Cauchy-Schwarz inequality in (\ref{eq: convergence rate for nonergodic setting 0}), and by the similar argument for (\ref{eq: convergence rate for nonergodic setting 2}) and (\ref{eq: convergence rate for nonergodic setting 3}),  we additionally have for any $c > 2\|\lambda^{*}\|$
  \vspace{-2pt}
  \begin{equation*}\label{eq: large-deviation of Alg for ellpetic MP 9}
		\begin{aligned}
			\theta_{K-1}^{2} \left( f(u_{K}) + g(z_{K}) - f(u^{\ast}) - g(z^{\ast}) 
			\right)  
			\leq
			&
			\frac{2 \phi_{\theta_{2\kappa}} Q^{2}}{M } 
			+ \frac{ \sqrt{|D|} (u_{\max} - u_{\min}) (\alpha + \frac{C_{P}^{4}}{a_{\min}}) \theta_{2\kappa-1}^{2}}{2} \\
			&
			+ \frac{\left\|\lambda_{0}\right\|_{L^{2}(D)}^{2} + c^{2}}{ \mu \rho} 
			+ \sum_{k=0}^{K-1} \theta_{k}\Delta (v_{k},u_{k},\Xi_{k}) ,
		\end{aligned}
		\vspace{-8pt}
	\end{equation*}
	and \vspace{-7pt}
	\begin{equation*}\label{eq: large-deviation of Alg for ellpetic MP 10}
		\begin{aligned}
			\frac{c \theta_{K-1}^{2}}{2} \left\| u_{K}-z_{K}\right\|_{L^{2}(D)} 
			\leq
			&
			\frac{2 \phi_{\theta_{2\kappa}} Q^{2}}{M } 
			+ \frac{ \sqrt{|D|} (u_{\max} - u_{\min}) (\alpha + \frac{C_{P}^{4}}{a_{\min}}) \theta_{2\kappa-1}^{2}}{2} \\
			&
			+ \frac{\left\|\lambda_{0}\right\|_{L^{2}(D)}^{2} + c^{2}}{ \mu \rho} 
			+ \sum_{k=0}^{K-1} \theta_{k}\Delta (v_{k},u_{k},\Xi_{k}) ,
		\end{aligned}
		\vspace{-3pt}
	\end{equation*}
	Consequently, combing the above inequalities with (\ref{eq: large-deviation of Alg for ellpetic MP 8}), respectively, and using the fact $\theta_{k-1}\geq k/2$, we conclude 
	\vspace{-2pt}
	\begin{equation}\label{eq: large-deviation of Alg for ellpetic MP 11}
		\text{Prob}\left\lbrace
		f(u_{K}) + g(z_{K}) - f(u^{\ast}) - g(z^{\ast}) 
		<
		\frac{C_{3}}{K^{2}} + \frac{\epsilon C_{4}}{K} \right\rbrace 
		\geq
		1-\exp\{-\frac{\epsilon^{2}}{3}\},
		\vspace{-3pt}
	\end{equation}
	and \vspace{-2pt}
	\begin{equation}\label{eq: large-deviation of Alg for ellpetic MP 12}
		\text{Prob}\left\lbrace
		\left\| u_{K}-z_{K}\right\|_{L^{2}(D)}
		<
		\frac{2 C_{3}}{c K^{2}} + \frac{2 \epsilon C_{4}}{c K} \right\rbrace 
		\geq
		1-\exp\{-\frac{\epsilon^{2}}{3}\}.
		\vspace{-3pt}
	\end{equation}
	Now we let $\epsilon = \delta\sqrt{\ln K}$ to obtain the desired results. 
	
	The probabilities of large deviations for the general convex case follow from (\ref{eq: nonergodic convergence rate in convex case 5}) and the similar arguments for the strongly convex case, and hence the details are skipped.
	%	For general convex case, by essentially the same arguments as above,  we can conclude
	%	\begin{equation}\label{eq: large-deviation of Alg for ellpetic MP 15}
		%		\text{Prob}\left\lbrace
		%		f(u_{K}) + g(z_{K}) - f(u^{\ast}) - g(z^{\ast}) 
		%		<
		%		\frac{C_{5}}{K} + \frac{\epsilon C_{4}}{K} \right\rbrace 
		%		\geq
		%		1-\exp\{-\frac{\epsilon^{2}}{3}\},
		%	\end{equation}
	%	and
	%	\begin{equation}\label{eq: large-deviation of Alg for ellpetic MP 16}
		%		\text{Prob}\left\lbrace
		%		\left\| u_{K}-z_{K}\right\|
		%		<
		%		\frac{2 C_{5}}{c K} + \frac{2 \epsilon C_{4}}{c K} \right\rbrace 
		%		\geq
		%		1-\exp\{-\frac{\epsilon^{2}}{3}\}.
		%	\end{equation}	
	%
	%\par\hspace{1ex}
\end{proof}

\subsection{Numerical experiments}
\label{sec: numerical experiments}
In this section, we report the test results of the algorithmic framework presented in Algorithm~\ref{alg:Stochastic Linearized ADMM}, applied to the constrained optimal control problems governed by the random elliptic PDE considered in subsection~\ref{subsec: random elliptic PDE-constrained optimization}, and numerically verify its efficiency on specific model problems.

\subsubsection{Parameter settings}
\label{subsubsec: parameter settings}
%{\textbf{Parameter settings}.}

In the strongly convex case, given the strong convexity modulus $\alpha$, we chose the parameters $\rho$ and $\eta$ in Algorithm~\ref{alg:Stochastic Linearized ADMM} such that
$\rho + \eta = \alpha$ and $\eta(1-\mu) = 2\rho\mu$ for a prescribed value of $\mu$. 

In the general convex case, the parameter $\rho$ was set to $\beta$, where $\beta$ is the specified sparsity-promoting parameter. Note that the choice of $\eta$ depends on the Lipschitz constant $L$. Since it is difficult to calculate the exact value of $L$, the value of $L$ was estimated by averaging the norms of $\nabla F$ over $10^{3}$ calls to the SFO. For a prescribed value of $\mu$, we set $\eta = \min\{ \mu\rho / (1-\mu) + 1.01L, \rho \}$.

We compared our method with the SPG method \cite{jofre2019variance,rosasco2020convergence}, the SSG method \cite{jain2021making,nemirovski2009robust}, and extensions of the adaptive SG method \cite{cao2022adaptive} that incorporate subgradient and proximal operators. For solving the constrained optimal control problems governed by random PDEs, the main difference among these methods lies in the choice of step size; therefore, in our experiments we adhered to the stepsize policy for each method in the literature.

Throughout our tests, all compared methods were initialized from the same starting point and employed the same stochastic gradient $G_{k}$ computed by (\ref{eq: average SG}). In addition, each realization of $\xi$ was generated uniformly at random, and the batch size $m_{k}$ was set to increase asymptotically according to $m_{k} = \lceil 0.5 \times k^{1.1} \rceil$. For the  simulations, the  piecewise  linear finite elements were used on the spatial domain $D$ with mesh size $2^{-5}$. Our codes were written in MATLAB R2023b and experiments were implemented on a
desktop computer with a 13th Gen Intel Core i7-13700K 3.40 GHz processor.

\subsubsection{Experiment results}
\label{subsubsec: experiment results}
Let $D= [0,1]^{2}$,  and we considered an example of problem (\ref{eq: elliptic optimal control problem}) in which the desired state is given by 
\begin{equation*}
	y_{d} = 
	\left\{
	\begin{aligned}
		-1,& \enspace x\in (0.25,0.75)^{2}, \\
		1,& \enspace \text{else}.
	\end{aligned}
	\right.
\end{equation*}
As in \cite{milz2023reliable}, the random diffusion coefficient is set to
\vspace{-2pt}
\begin{equation*}
	\begin{aligned}
		a(x,\omega) = 
		&\exp\left\lbrace  
		\xi^{(1)}(\omega)\cos(1.1\pi x_{1}) + \xi^{(2)}(\omega)\cos(1.2\pi x_{1}) + \xi^{(3)}(\omega)\sin(1.3\pi x_{2})
		\right.
		\\
		&
		\hspace{6ex}\left.
		+ \xi^{(4)}(\omega)\sin(1.4\pi x_{2}) 
		\right\rbrace , 
	\end{aligned}
	\vspace{-3pt}
\end{equation*}
where the random variables $\xi^{(1)}, \xi^{(2)}, \xi^{(3)}$ and $\xi^{(4)}$ are mutually independent and uniformly distributed in $[-1,1]$. The control constraints are $u_{\min}=-6$ and $u_{\max}=6$.

{\it \textbf{Strongly convex case.}} 
For $\alpha>0$, we used the above parameter rule with $\mu = 0.5$. To compare the efficiency of our method with the aforementioned SG-type methods, we generated $10^{4}$ samples from the same distribution and computed the empirical objective value at each iteration. The entire procedure was repeated for $50$ independent runs, and the average empirical objective values are shown in Figure~\ref{fig: Alg comparision for elliptic strongly convex case} for four $(\alpha,\beta)$ pairs. From the figure, we see that when $\alpha$ is large, the SSG method outperforms the SPG method and the extensions of the adaptive SG method, whereas when $\alpha$ is small, the SPG method in \cite{jofre2019variance,rosasco2020convergence} is preferable to the other SG-type methods. In addition, the proposed stochastic ADMM  performs best overall and achieves a much lower objective value within the same running time, especially when a small $(\alpha,\beta)$ pair is used.
\begin{figure}[tbhp]
\vspace{-15pt}
	\centering
	\subfloat[$\alpha=10^{-5},\beta=10^{-5}$]{
		\includegraphics[width=0.228\textwidth, height=0.205\textwidth]{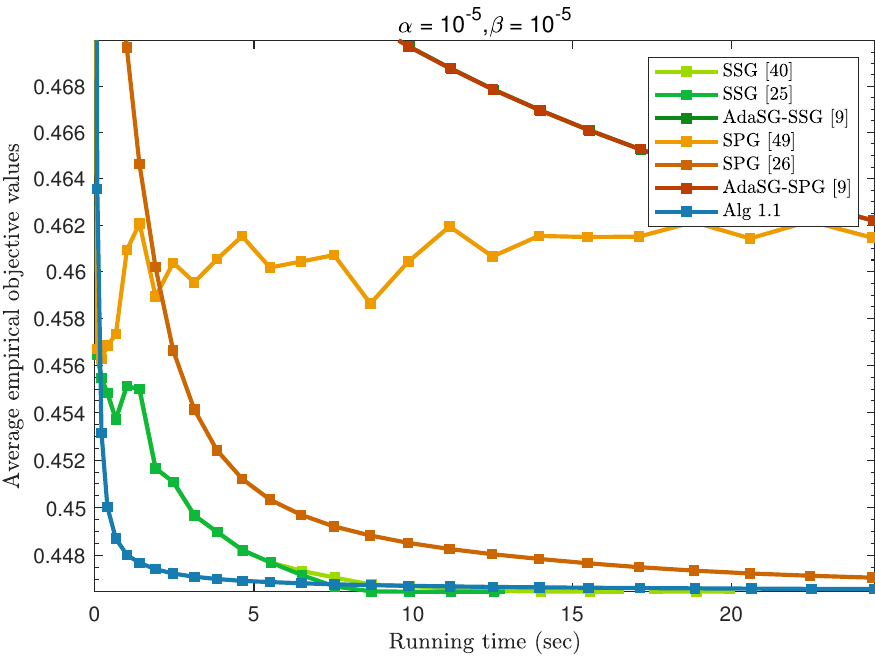}}
	\hspace{0.41em}
	\subfloat[$\alpha=10^{-5},\beta=10^{-6}$]{
		\includegraphics[width=0.228\textwidth, height=0.205\textwidth]{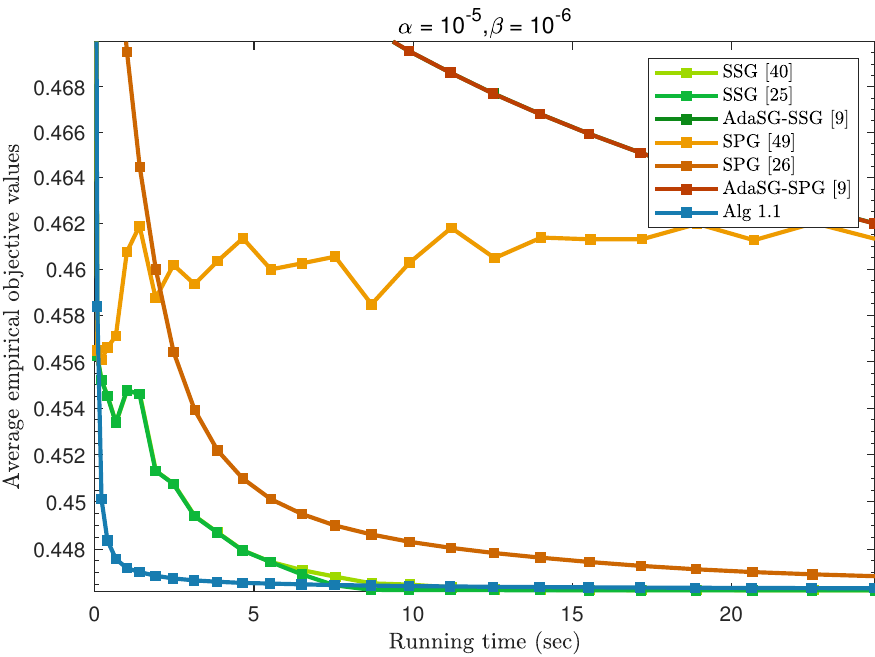}}
	\hspace{0.41em}
	\subfloat[$\alpha=10^{-6},\beta=10^{-5}$]{
		\includegraphics[width=0.228\textwidth, height=0.205\textwidth]{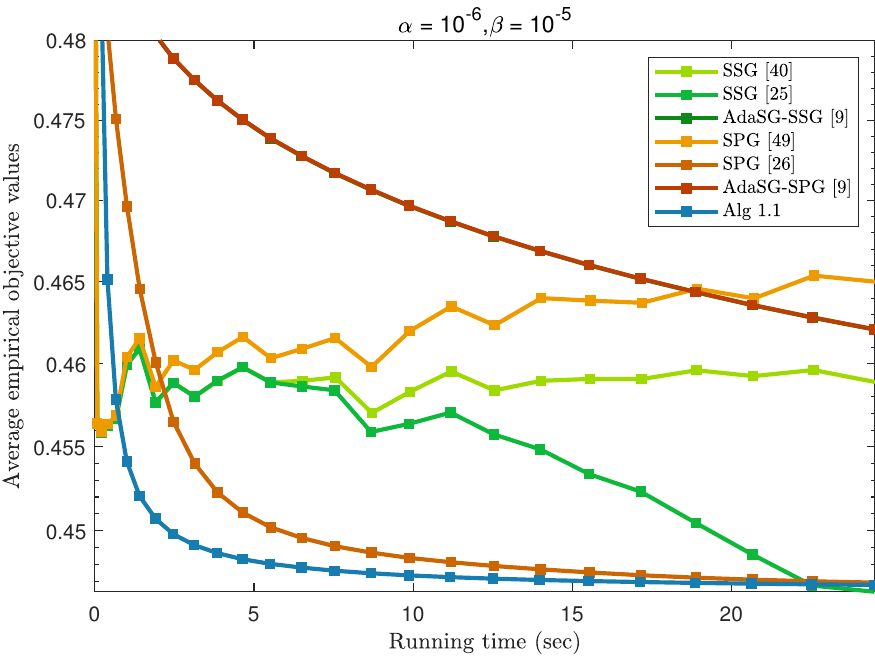}}
	\hspace{0.41em}
	\subfloat[$\alpha=10^{-6},\beta=10^{-6}$]{
		\includegraphics[width=0.228\textwidth, height=0.205\textwidth]{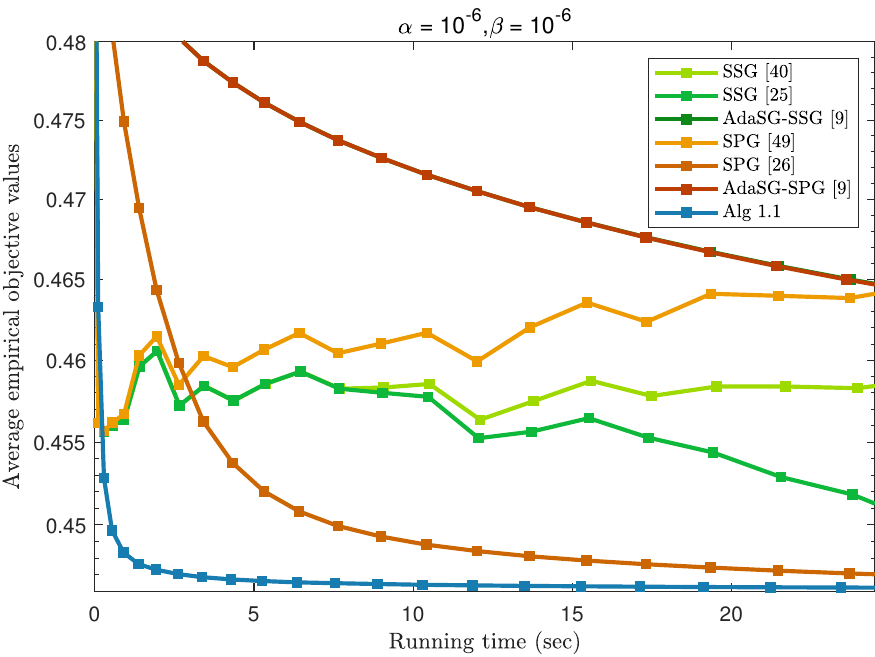}}
	\vspace{-7pt}
	\caption{Objective values of Algorithm~\ref{alg:Stochastic Linearized ADMM} and SG-type methods for different $(\alpha,\beta)$ pairs in strongly convex case.}
	\label{fig: Alg comparision for elliptic strongly convex case}
		\vspace{-8pt}
\end{figure}
\begin{figure}[tbhp]
	\vspace{-20pt}
	\centering
	\subfloat[$\alpha=10^{-5},\beta=10^{-5}$]{
		\includegraphics[width=0.242\textwidth, height=0.19\textwidth]{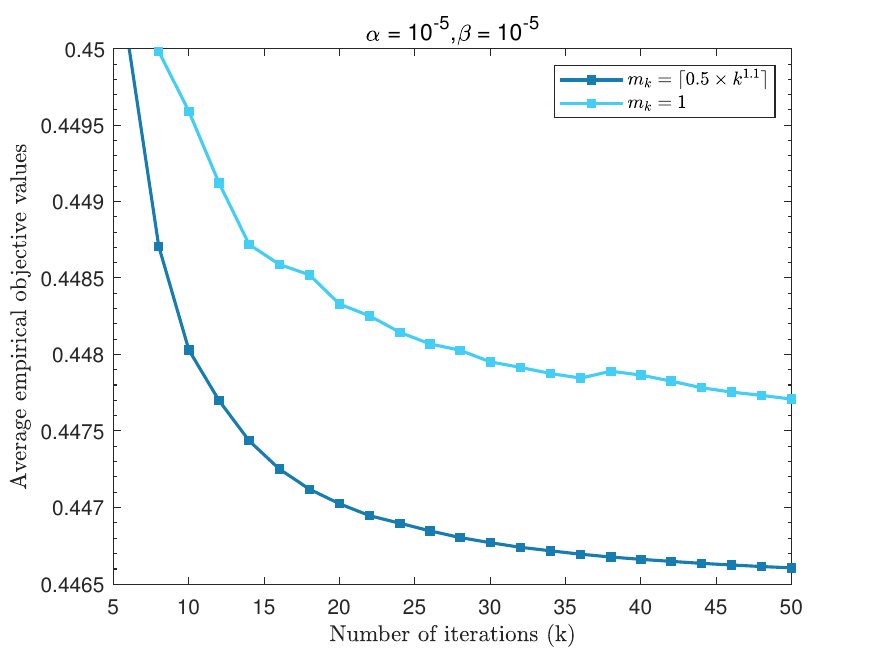}}
	\subfloat[$\alpha=10^{-5},\beta=10^{-6}$]{
		\includegraphics[width=0.242\textwidth, height=0.19\textwidth]{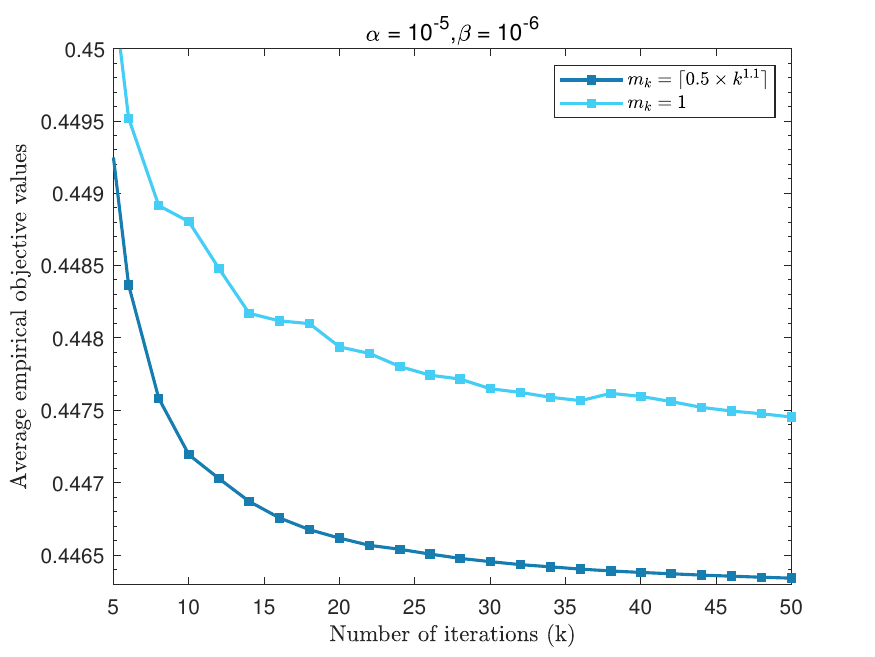}}
	\subfloat[$\alpha=10^{-6},\beta=10^{-5}$]{
		\includegraphics[width=0.242\textwidth, height=0.19\textwidth]{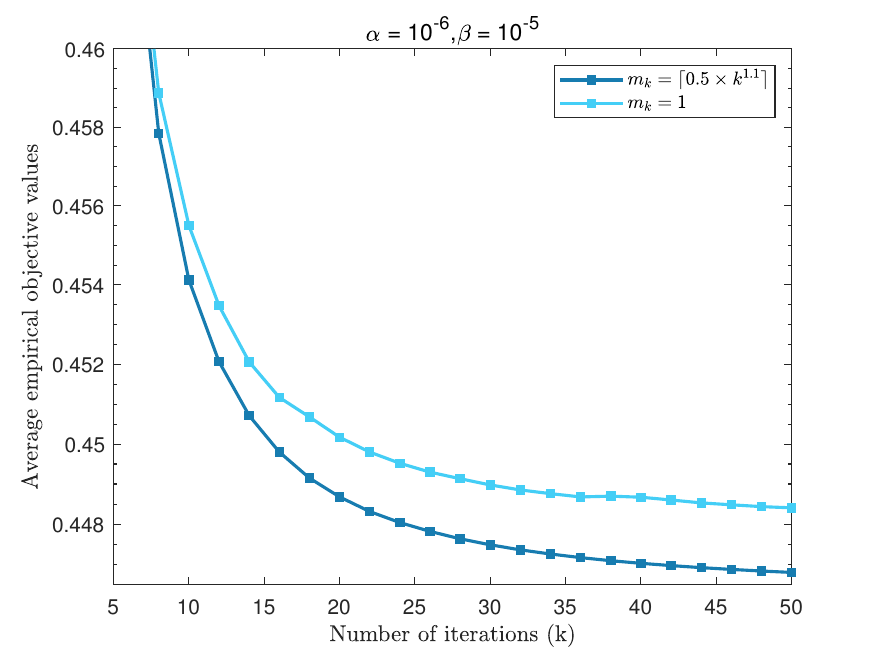}}
	\subfloat[$\alpha=10^{-6},\beta=10^{-6}$]{
		\includegraphics[width=0.242\textwidth, height=0.19\textwidth]{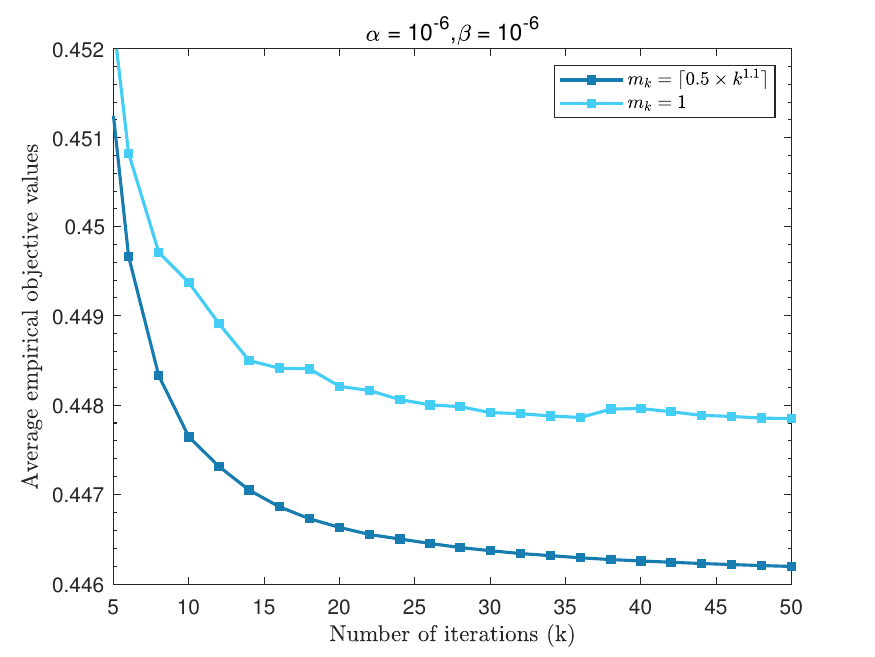}}
		\vspace{-7pt}
	\caption{Objective values of Algorithm~\ref{alg:Stochastic Linearized ADMM}  for different $(\alpha,\beta)$ pairs in the strongly convex case with different sampled stochastic gradients $G_{k}$.}
	\label{fig: Ellipse strongly convex VR-E}
\end{figure}

\vspace{-5pt}
We observe that the averaged $G_{k}$ does improve the efficiency of the proposed method when increasing batch size $m_{k}$. We further tested Algorithm~\ref{alg:Stochastic Linearized ADMM} with $m_{k}\equiv 1$ and compared its solution with that obtained by adopting $m_{k} = \lceil 0.5 \times k^{1.1} \rceil$ for a fixed number of $50$ iterations. Figure~\ref{fig: Ellipse strongly convex VR-E} shows the comparison results of the average empirical objective values. Fixing $\alpha = 10^{-4}$, we also report in Table~\ref{tab: Elliptical strongly convex sparsity}, for various values of $\beta$, the percentages of $D$ on which the computed control $u_{K} \neq 0$ under the two $m_{k}$ settings. Moreover, the numerical results for the computed control $u$ and the corresponding average empirical state $y$ are plotted in Figure~\ref{fig: Ellipse strongly convex sparse u} and \ref{fig: Ellipse strongly convex sparse y} for several values of $\beta$, respectively.

\vspace{-5pt}
\begin{table}[tbhp]
\footnotesize
\caption{Percentage of the domain $D$ where $u \neq 0$ for different values of $\beta$ in strongly convex case.}\label{tab: Elliptical strongly convex sparsity}
	\vspace{-10pt}
\begin{center}
  \begin{tabular}{|c|c|c|c|c|c|c|} \hline
   $\beta$& 0 & $5\times 10^{-3}$& $8\times 10^{-3}$& $1\times 10^{-2}$& $2\times 10^{-2}$& $3\times 10^{-2}$ \\ \hline
    $m_{k} = \lceil 0.5 \times k^{1.1} \rceil$ & $100\%$  &$93.44\%$ &$78.56\%$ &$70.97\%$ &$21.64\%$ &$0.02\%$ \\
    $m_{k} = 1$ &$100\%$ &$98.75\%$ &$91.88\%$ &$85.95\%$ &$49.74\%$ &$0.73\%$ \\  \hline
  \end{tabular}
\end{center}
\end{table}
\begin{figure}[tbhp]
		\vspace{-20pt}
	\centering
	\subfloat{
		\includegraphics[width=0.242\textwidth]{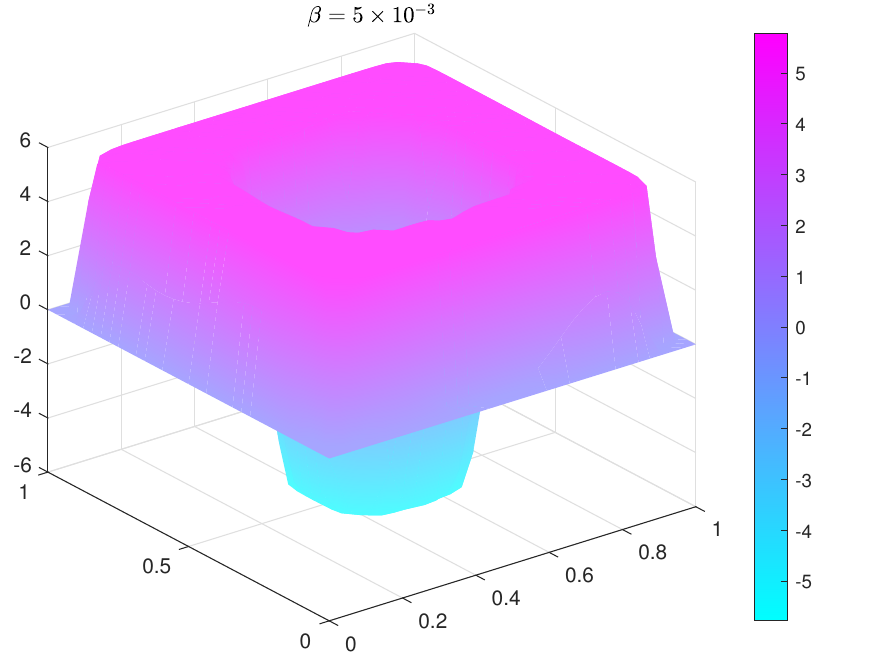}}
	\subfloat{
		\includegraphics[width=0.242\textwidth]{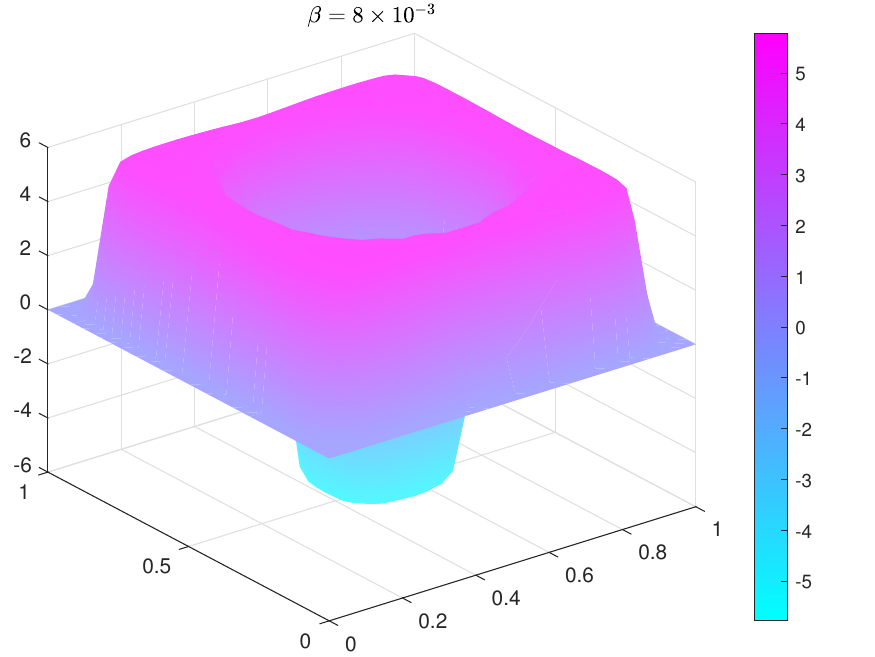}}
	\subfloat{
		\includegraphics[width=0.242\textwidth]{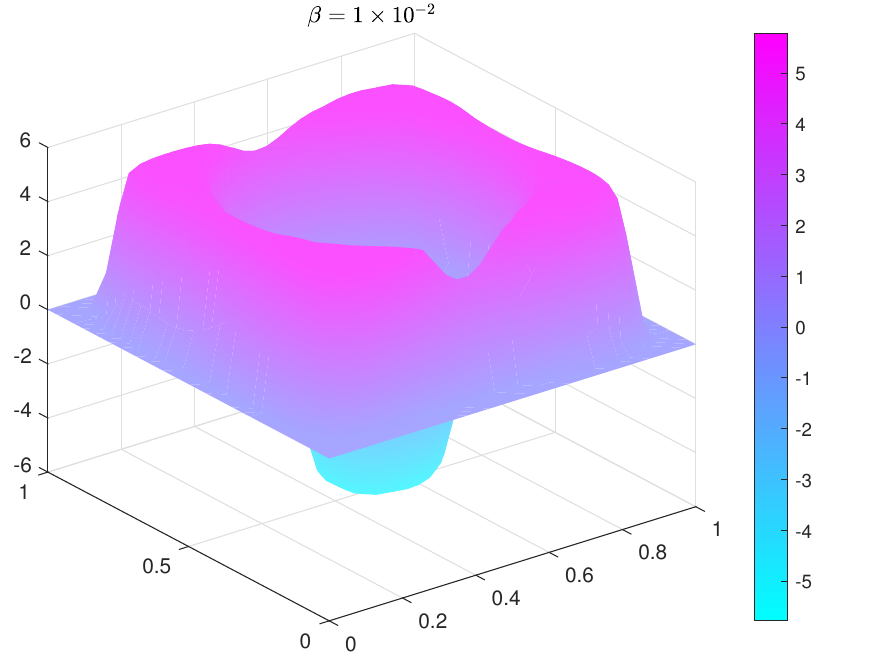}}
	\subfloat{
		\includegraphics[width=0.242\textwidth]{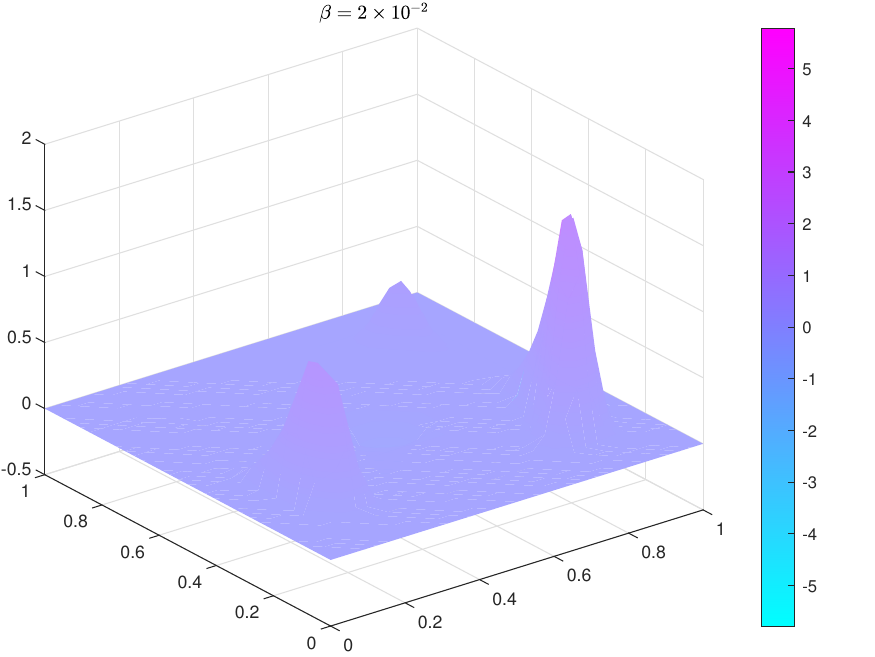}}
	\\
	\subfloat{
		\includegraphics[width=0.242\textwidth]{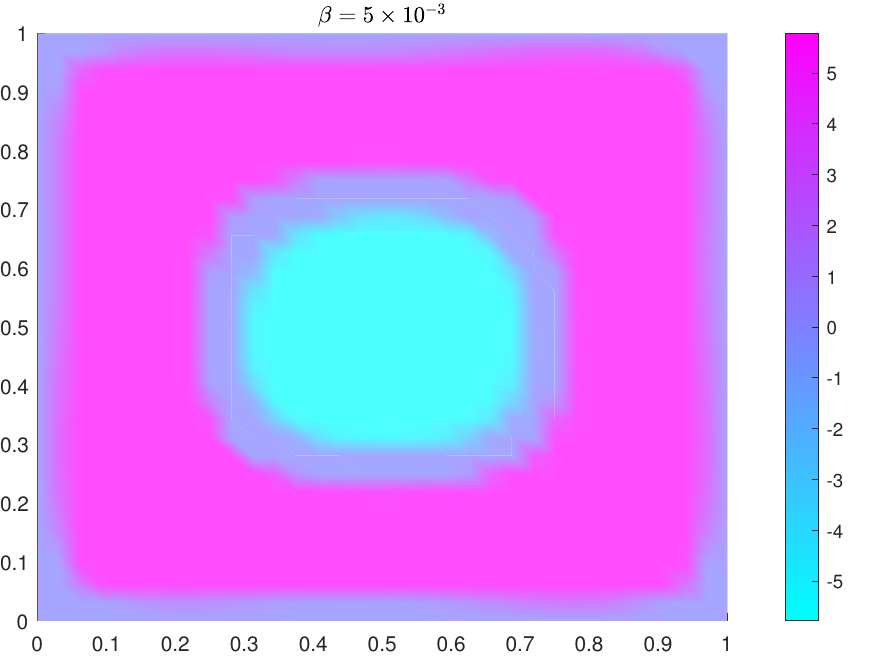}}
	\subfloat{
		\includegraphics[width=0.242\textwidth]{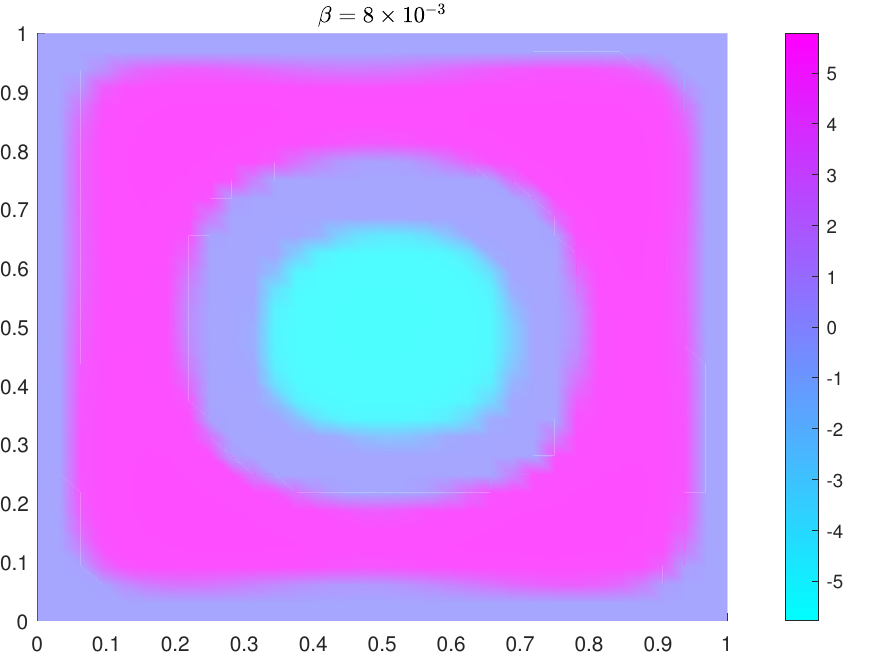}}
	\subfloat{
		\includegraphics[width=0.242\textwidth]{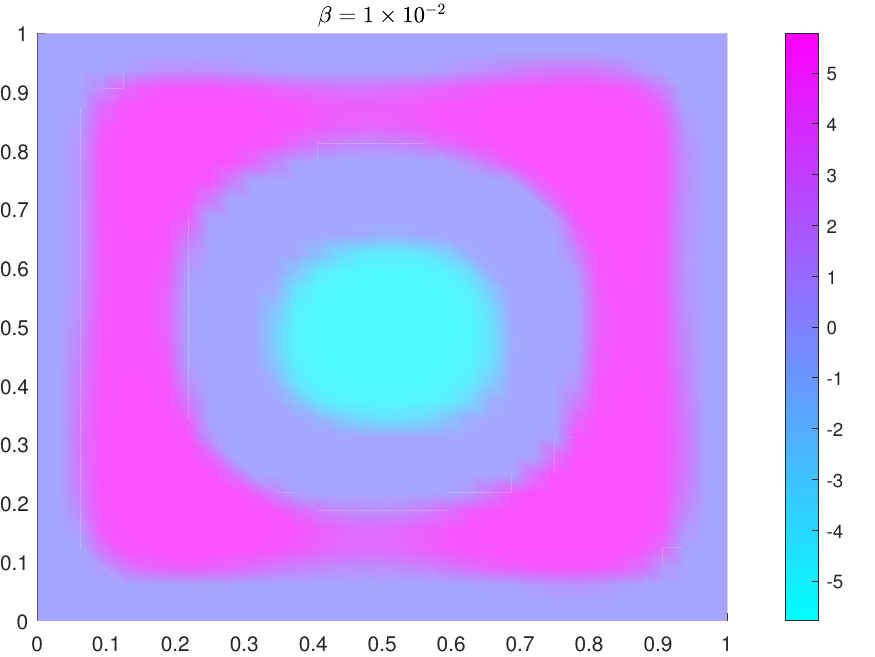}}
	\subfloat{
		\includegraphics[width=0.242\textwidth]{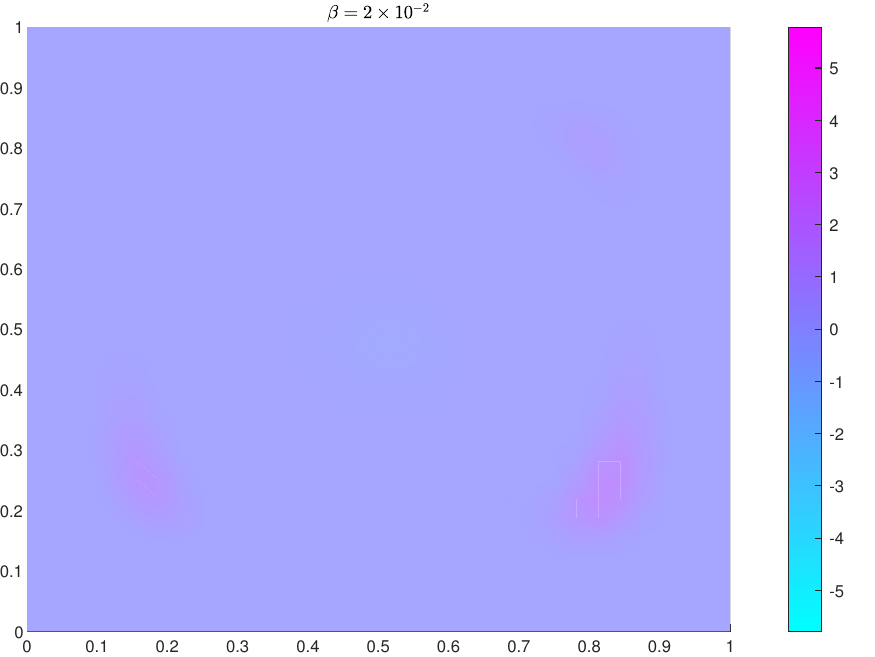}}
		\vspace{-10pt}
	\caption{Control variables for several values of $\beta$ with fixed $\alpha = 10^{-4}$ in strongly convex case. Top row: principal view; bottom row: top view.}
	\label{fig: Ellipse strongly convex sparse u}
\end{figure}
\begin{figure}[tbhp]
		\vspace{-20pt}
	\centering
	\subfloat{
		\includegraphics[width=0.242\textwidth]{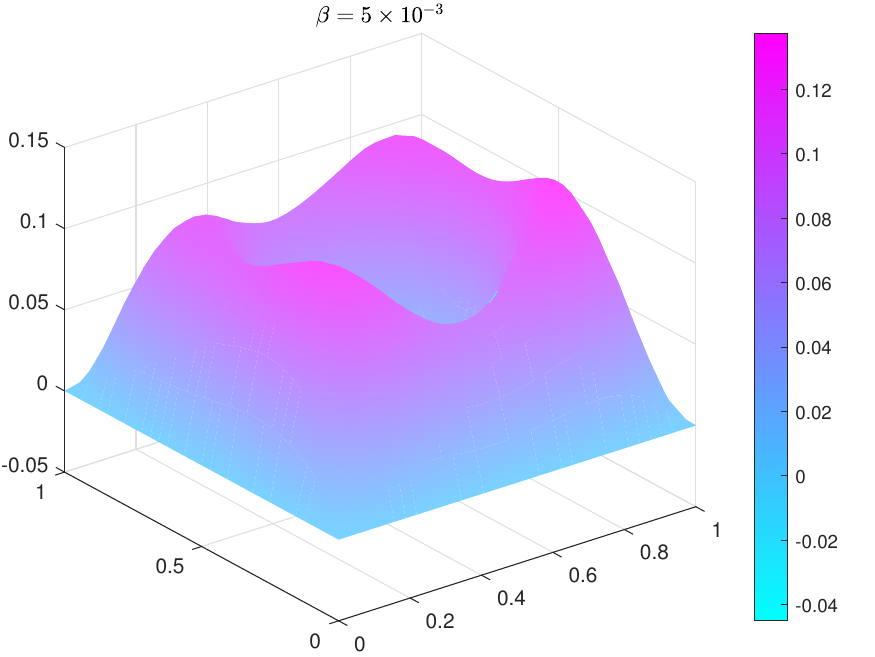}}
	\subfloat{
		\includegraphics[width=0.242\textwidth]{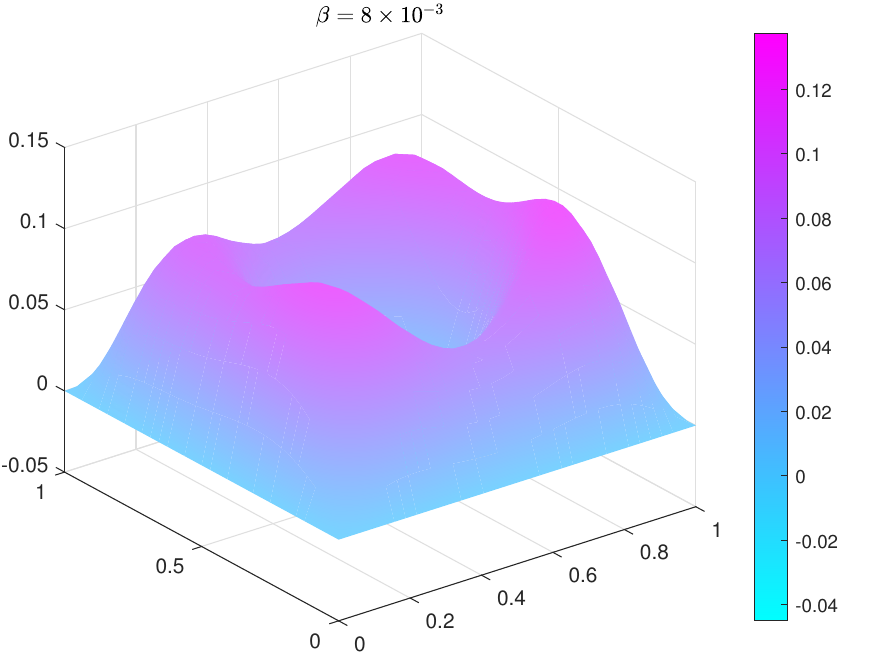}}
	\subfloat{
		\includegraphics[width=0.242\textwidth]{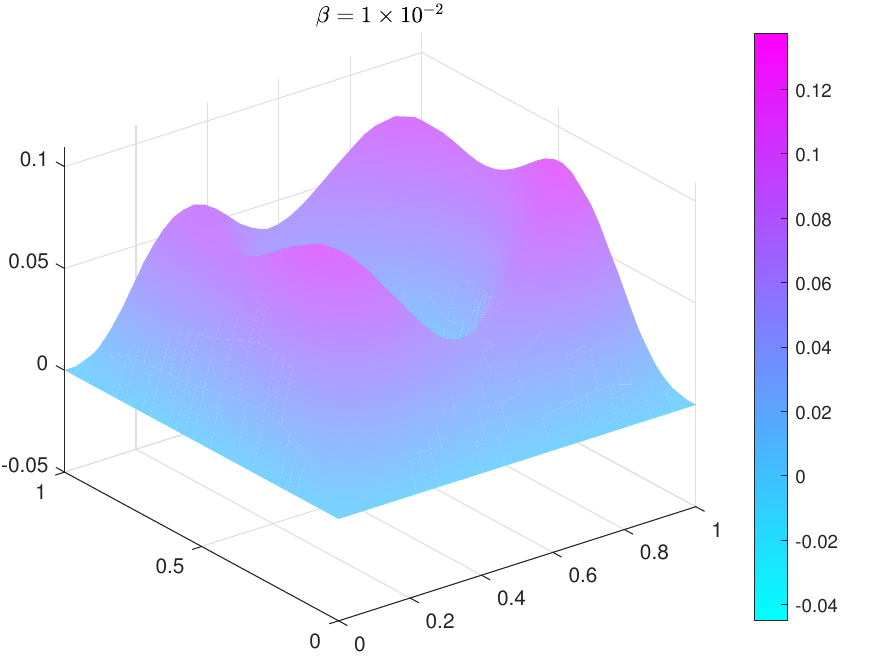}}
	\subfloat{
		\includegraphics[width=0.242\textwidth]{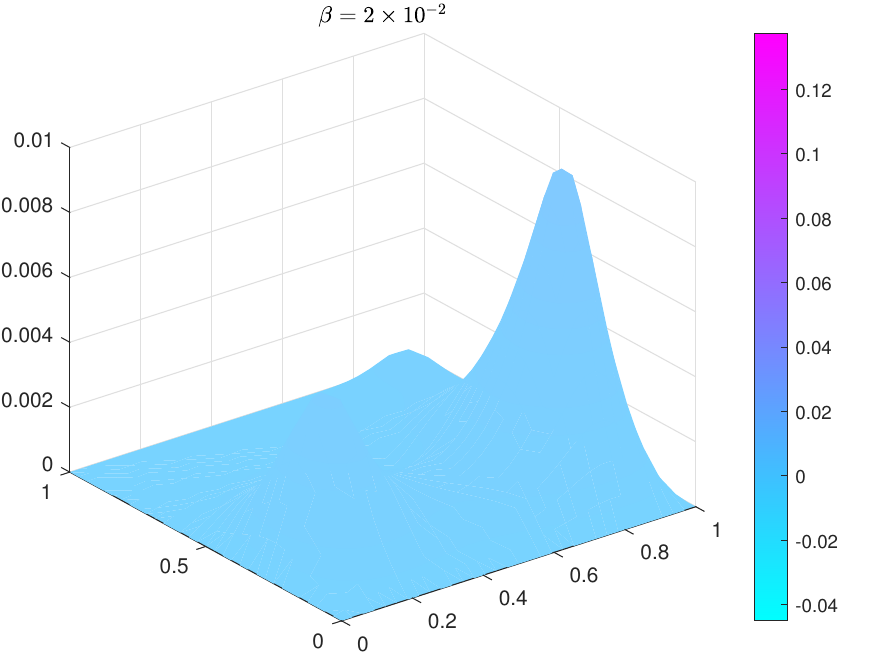}}
	\\
	\subfloat{
		\includegraphics[width=0.242\textwidth]{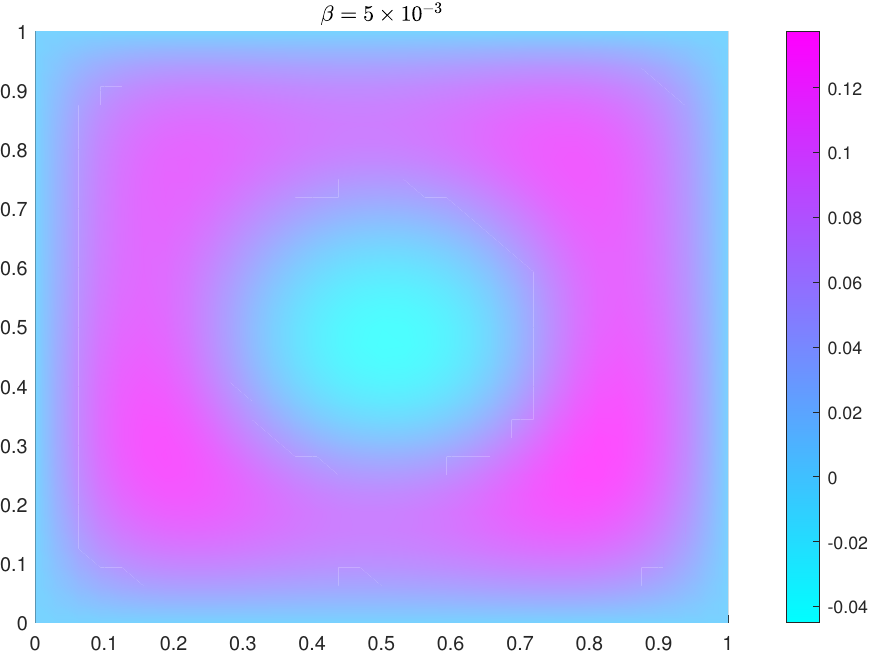}}
	\subfloat{
		\includegraphics[width=0.242\textwidth]{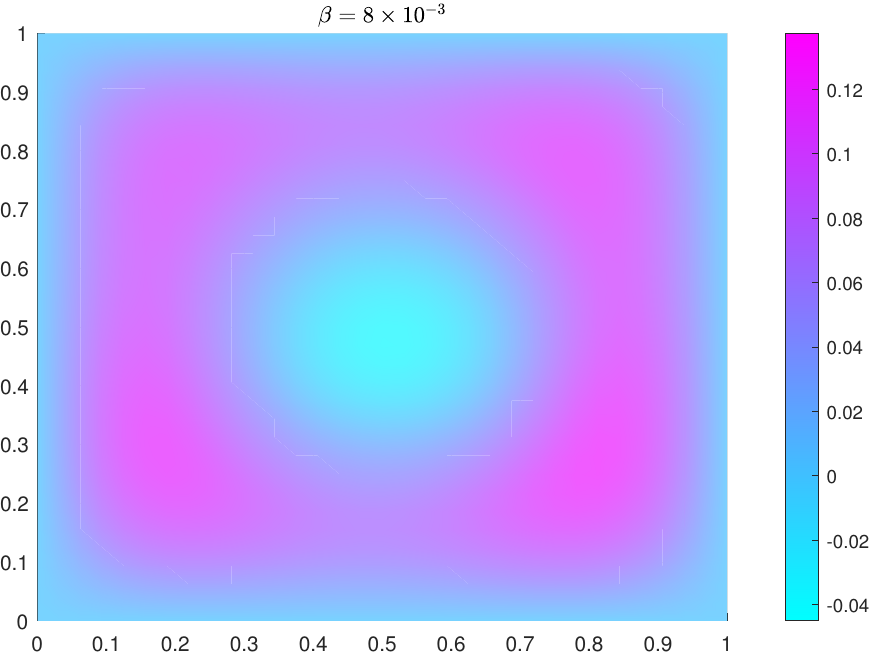}}
	\subfloat{
		\includegraphics[width=0.242\textwidth]{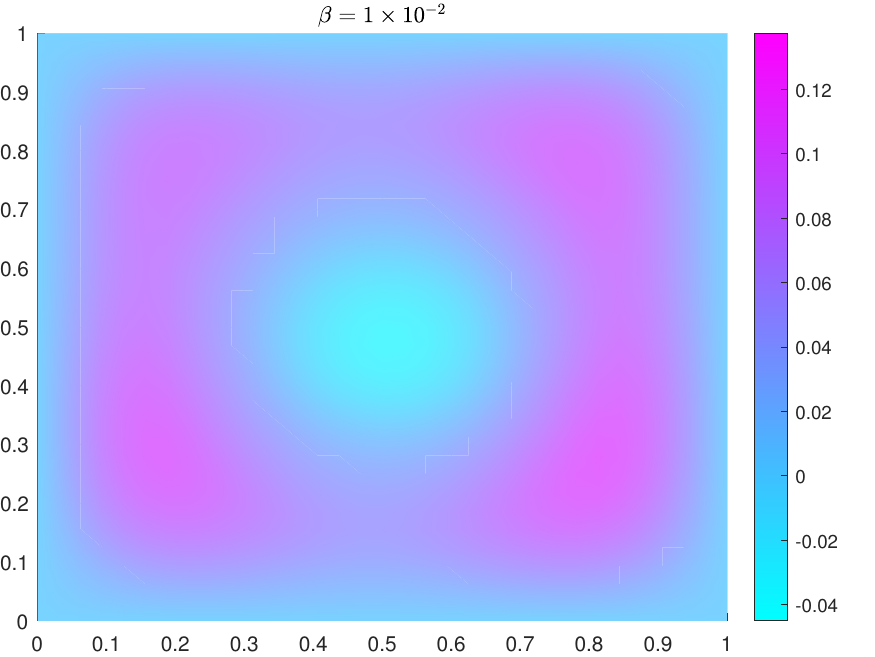}}
	\subfloat{
		\includegraphics[width=0.242\textwidth]{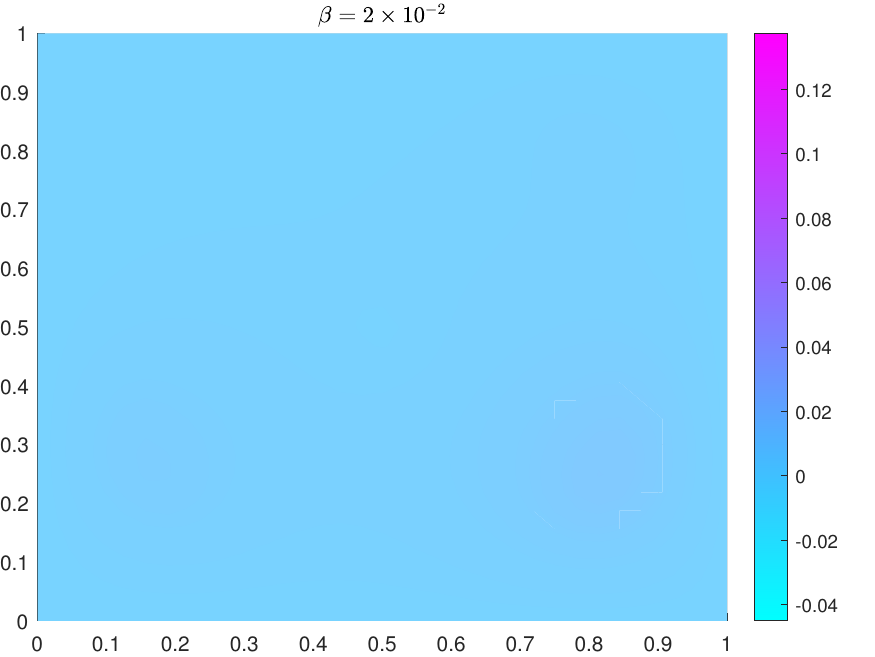}}
		\vspace{-10pt}
	\caption{State variables for several values of $\beta$ with fixed $\alpha = 10^{-4}$ in strongly convex case. Top row: principal view; bottom row: top view.}
	\label{fig: Ellipse strongly convex sparse y}
\end{figure}

\begin{figure}[H]
		\vspace{-20pt}
	\centering
	\subfloat[$\alpha=10^{-5},\beta=10^{-5}$]{
		\includegraphics[width=0.242\textwidth, height=0.19\textwidth]{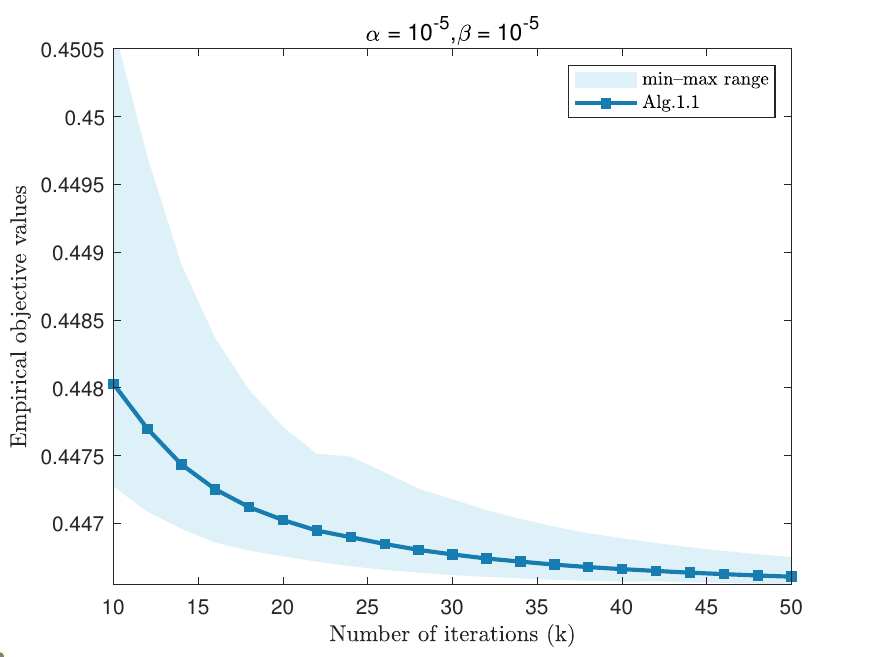}}
	\subfloat[$\alpha=10^{-5},\beta=10^{-6}$]{
		\includegraphics[width=0.242\textwidth, height=0.19\textwidth]{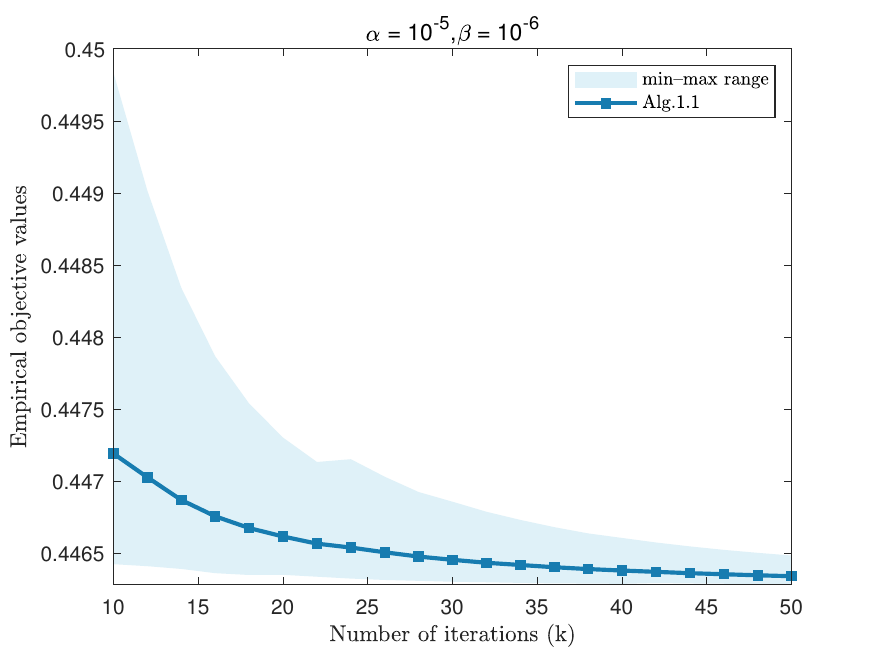}}
	\subfloat[$\alpha=10^{-6},\beta=10^{-5}$]{
		\includegraphics[width=0.242\textwidth, height=0.19\textwidth]{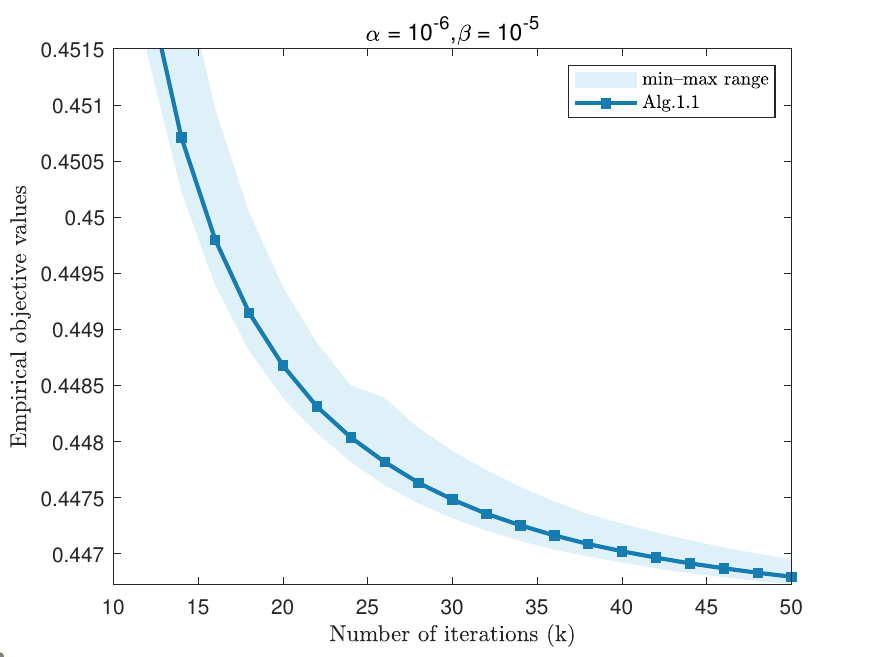}}
	\subfloat[$\alpha=10^{-6},\beta=10^{-6}$]{
		\includegraphics[width=0.242\textwidth, height=0.19\textwidth]{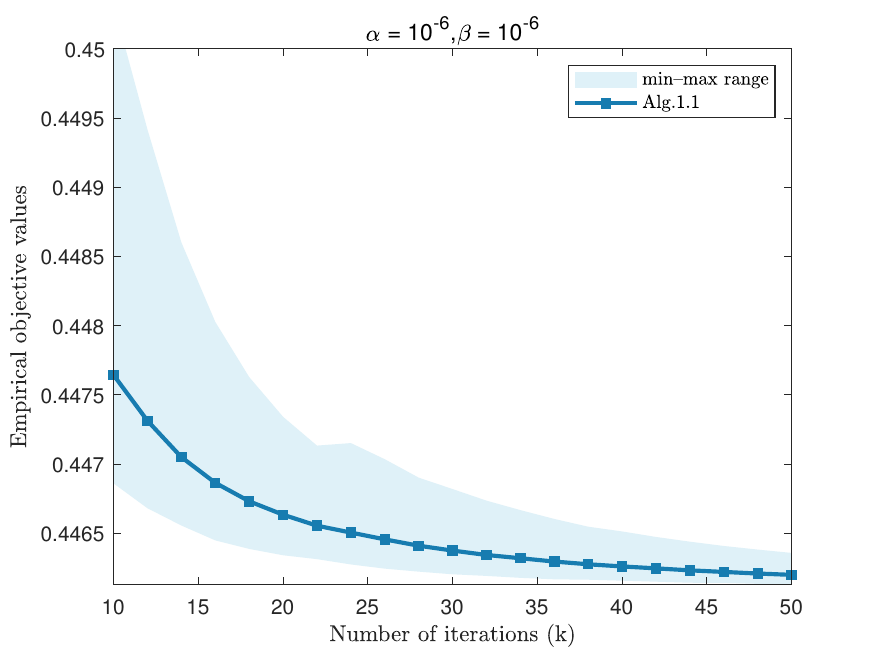}}
	\vspace{-7pt}
	\caption{High-probability convergence of Algorithm~\ref{alg:Stochastic Linearized ADMM} for different $(\alpha,\beta)$ pairs in the strongly convex case.}
	\label{fig: Ellipse strongly convex highProb}
\end{figure}

Next, for $50$ independent runs of Algorithm~\ref{alg:Stochastic Linearized ADMM}, Figure~\ref{fig: Ellipse strongly convex highProb} illustrates the run-to-run variability of the empirical objective values: the shaded region denotes the  $\min$--$\max$ range across runs, and the solid line represents the average empirical objective values. As the number of iterations increases, the progressive shrinkage of this $\min$--$\max$ envelope is consistent with the high-probability convergence of Algorithm~\ref{alg:Stochastic Linearized ADMM} for different $(\alpha,\beta)$ pairs.

{\it \textbf{Convex case.}} 
For the general convex case, we set $\mu=0.5$ in our parameter rule. Similarly to the strongly convex case, we compared the methods in terms of the average empirical objective values. Figure~\ref{fig: Ellipse convex compare} plots the results for different values of $\beta$. From the figures, we see that our method reaches much lower objective values than the others. 
\begin{figure}[tbhp]
		\vspace{-15pt}
	\centering
	\subfloat[$\beta=10^{-4}$]{
		\includegraphics[width=0.242\textwidth, height=0.22\textwidth]{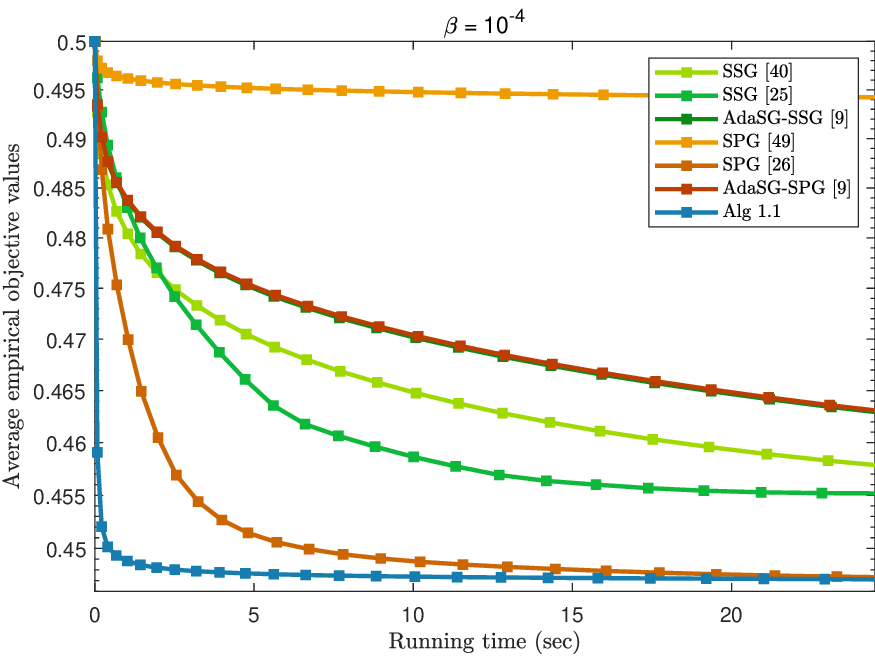}}
	\hspace{5em}
	\subfloat[$\beta=10^{-5}$]{
		\includegraphics[width=0.242\textwidth, height=0.22\textwidth]{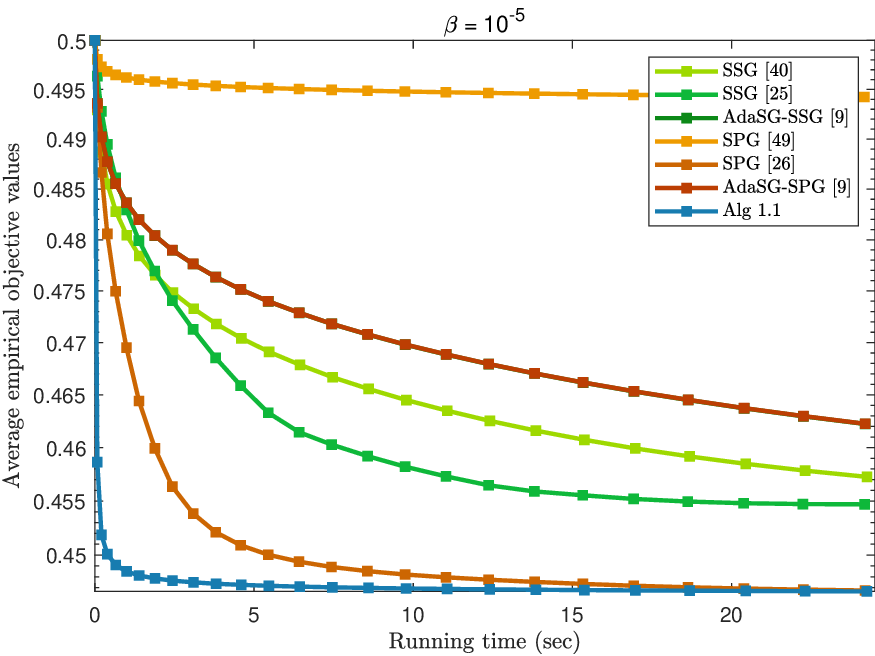}}
			\vspace{-7pt}
	\caption{Objective values of Algorithm~\ref{alg:Stochastic Linearized ADMM} and SG-type methods for different values of $\beta$ in the convex case.}
	\label{fig: Ellipse convex compare}
\end{figure}
\begin{figure}[tbhp]
			\vspace{-30pt}
	\centering
	\hspace{0.1em}
	\subfloat[$\beta=10^{-4}$]{
		\includegraphics[width=0.275\textwidth, height=0.215\textwidth]{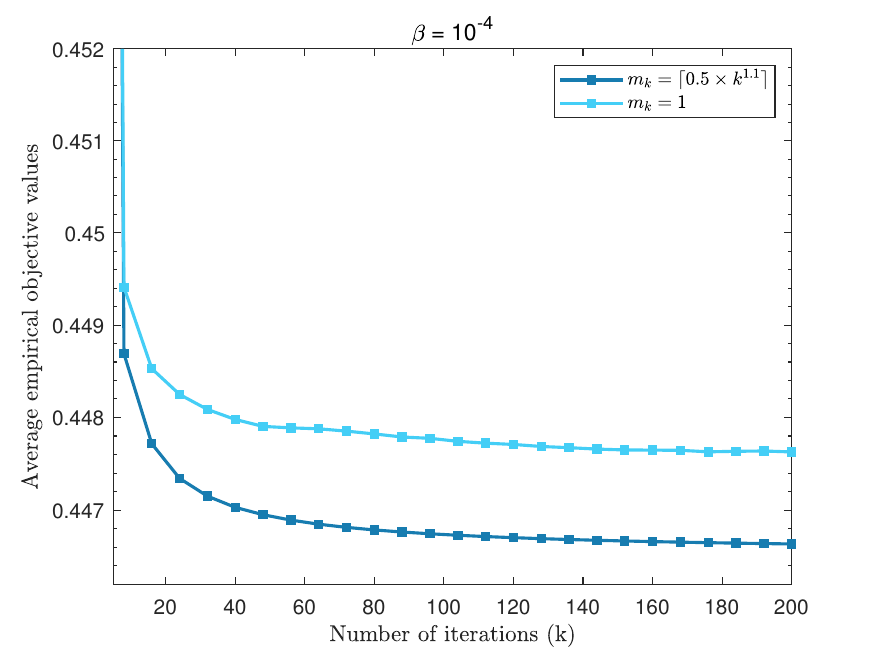}}
	\hspace{3.6em}
	\subfloat[$\beta=10^{-5}$]{
		\includegraphics[width=0.275\textwidth, height=0.215\textwidth]{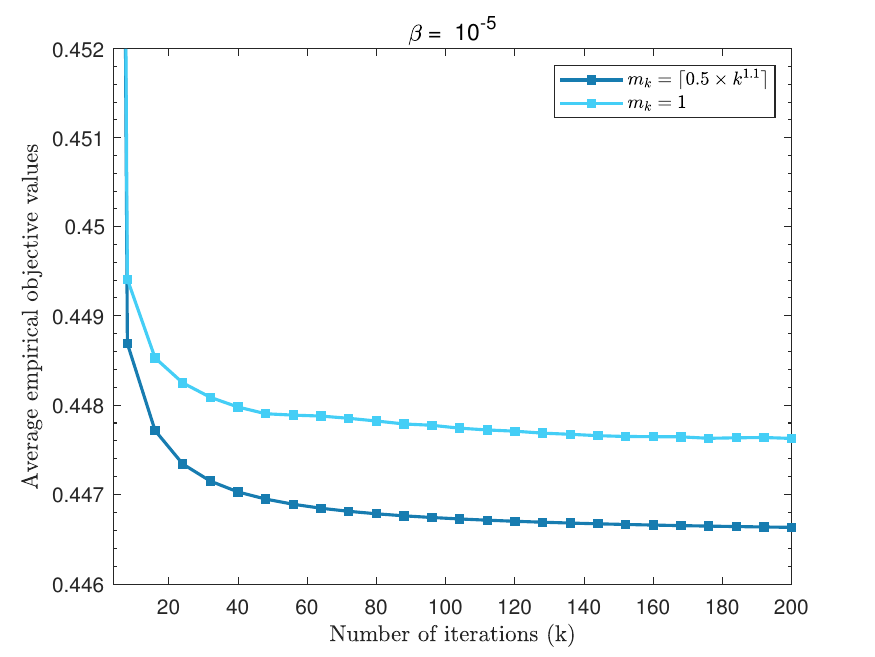}}
	\vspace{-7pt}
	\caption{Objective values of Algorithm~\ref{alg:Stochastic Linearized ADMM} for different values of $\beta$ in the convex case with different sampled stochastic gradients $G_{k}$.}
	\label{fig: Ellipse convex VR-E}
\end{figure}

\begin{figure}[htbp]
			\vspace{-20pt}
	\centering
	\subfloat{
		\includegraphics[width=0.242\textwidth]{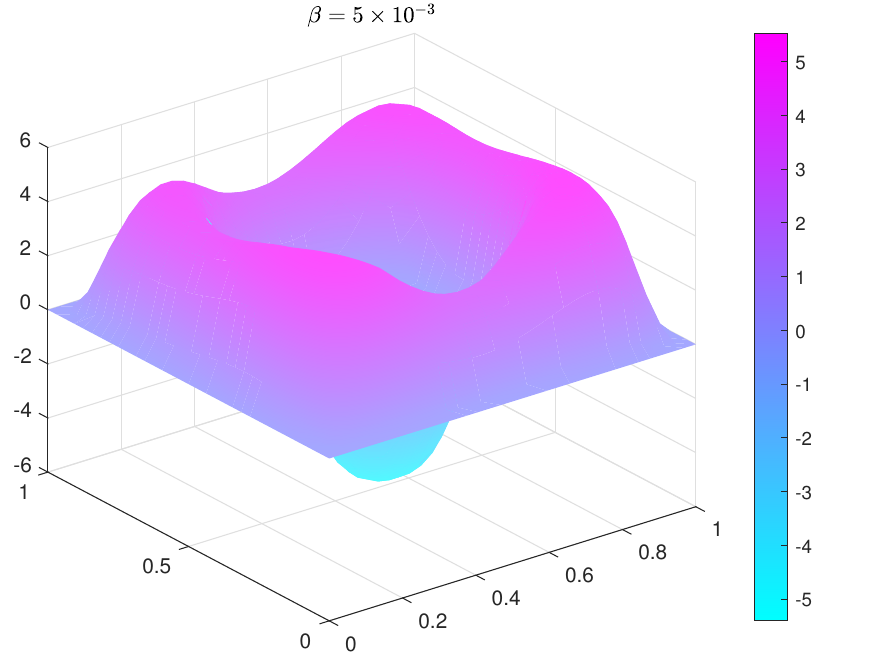}}
	\subfloat{
		\includegraphics[width=0.242\textwidth]{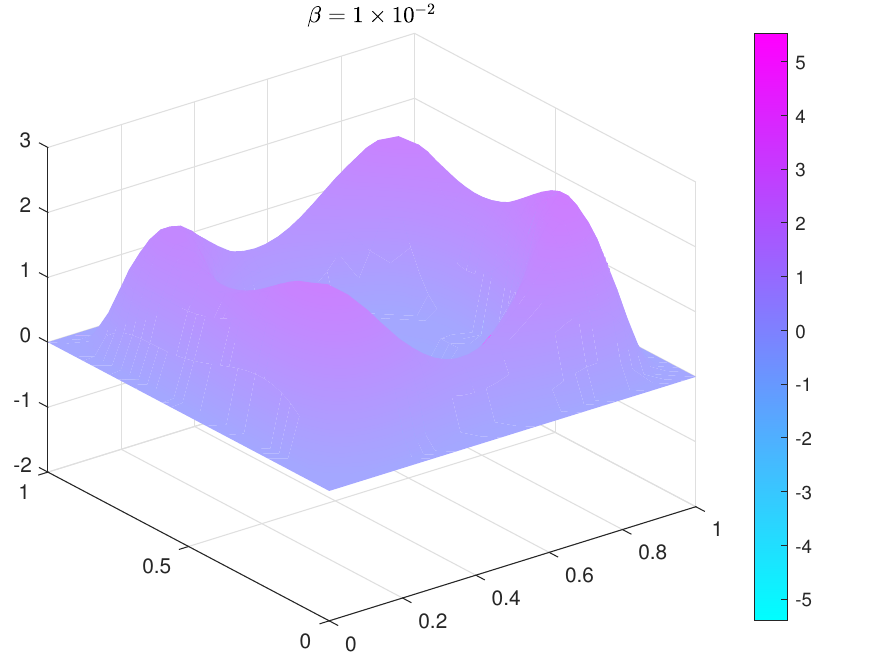}}
	\subfloat{
		\includegraphics[width=0.242\textwidth]{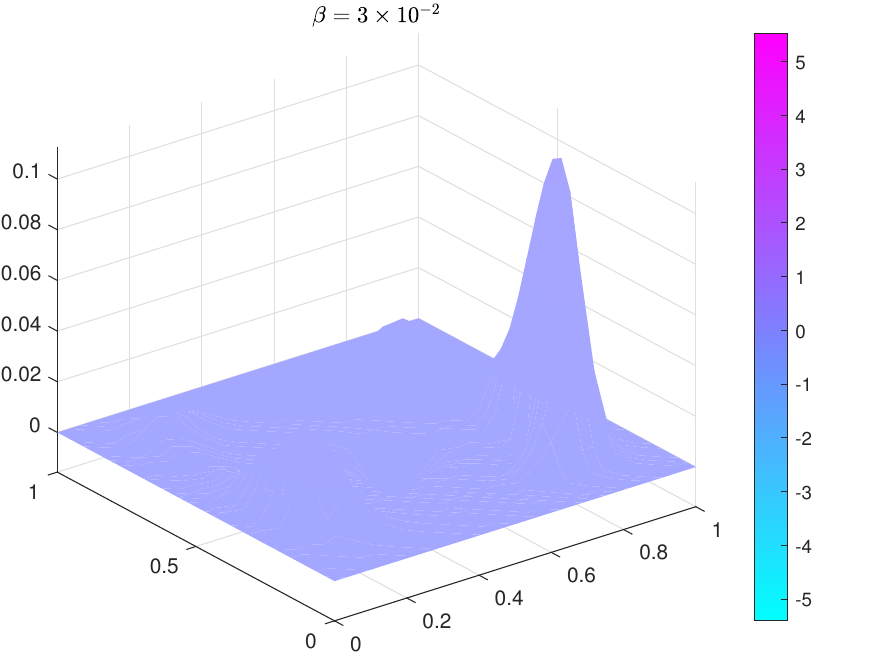}}
	\subfloat{
		\includegraphics[width=0.242\textwidth]{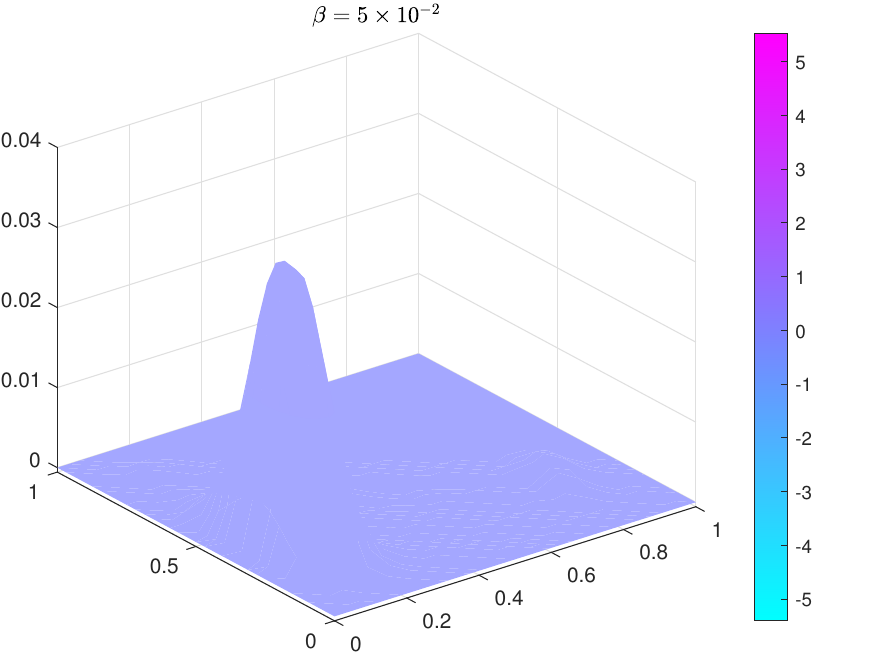}}\\
	\subfloat{
		\includegraphics[width=0.242\textwidth]{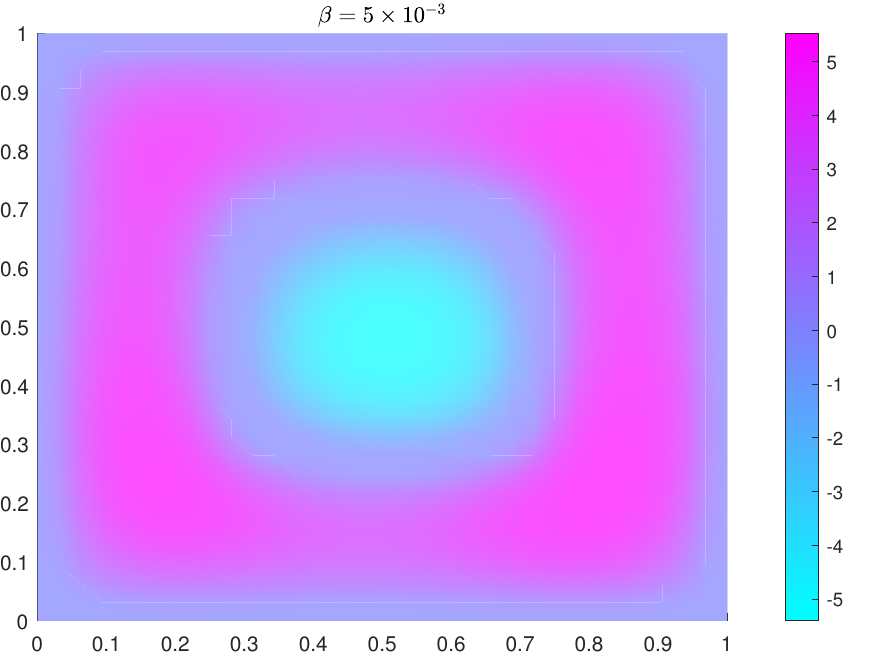}}
	\subfloat{
		\includegraphics[width=0.242\textwidth]{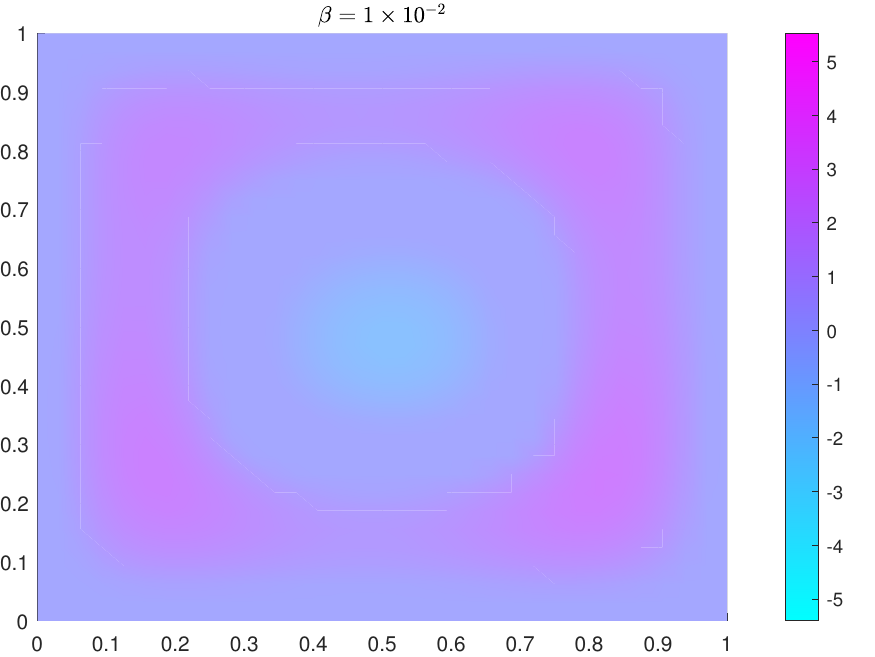}}
	\subfloat{
		\includegraphics[width=0.242\textwidth]{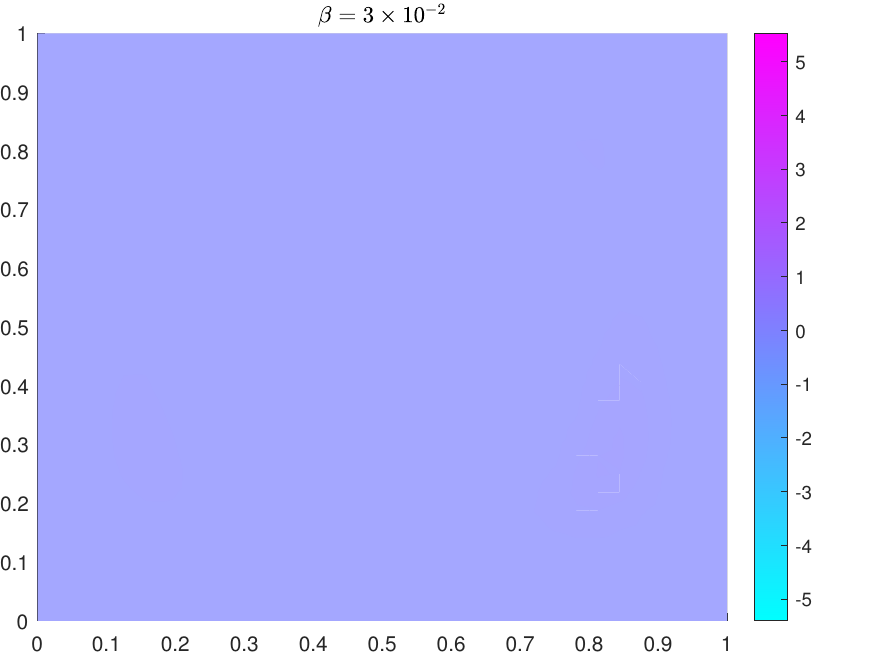}}
	\subfloat{
		\includegraphics[width=0.242\textwidth]{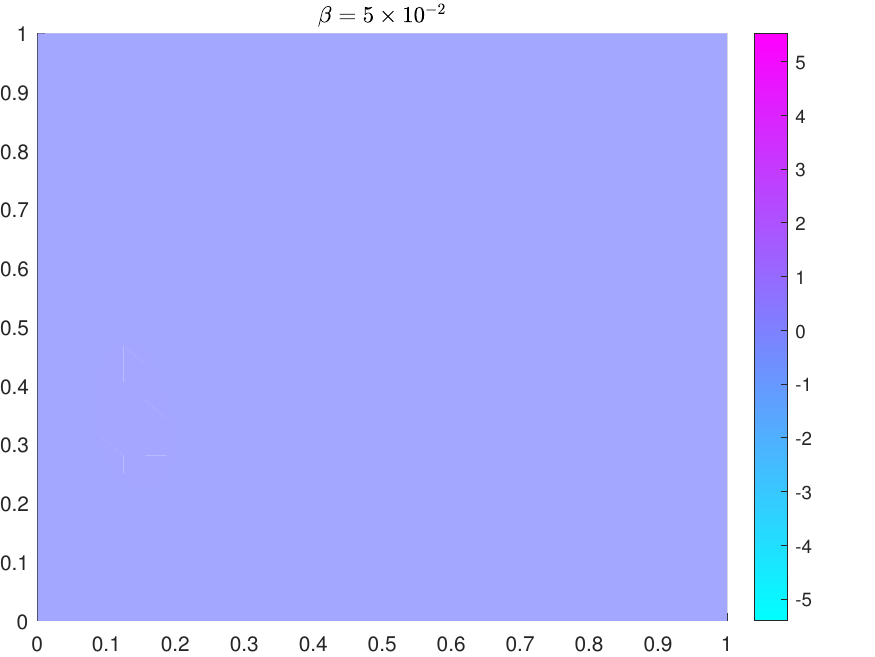}}
		\vspace{-10pt}
	\caption{Control variables for several values of $\beta$ in convex case. Top row: principal view; bottom row: top view.}
	\label{fig: Ellipse convex sparse u}
\end{figure}

Furthermore, we fixed the iteration number at $50$ and compared the average empirical objective values and the sparsity of the computed control under the two batch size settings $m_{k}=1$ and $m_{k} = \lceil 0.5 \times k^{1.1} \rceil$ for different $\beta$. The results are presented in Figure~\ref{fig: Ellipse convex VR-E} and Table~\ref{tab: Elliptical convex sparsity} respectively, which further verify that averaging $G_{k}$ has a crucial effect on the efficiency of our method. 
Additionally, for several values of $\beta$, Figure~\ref{fig: Ellipse convex sparse u} and \ref{fig: Ellipse convex sparse y} plot the computed control $u$ and the corresponding average empirical state $y$, respectively.

We also report the fluctuations of the empirical objective values over $50$ independent runs of Algorithm~\ref{alg:Stochastic Linearized ADMM} in Figure~\ref{fig: Ellipse convex highProb}, which is compatible with the high-probability convergence of the algorithm.

\begin{table}[htbp]
\footnotesize
\vspace{-5pt}
\caption{Percentage of the domain $D$ where $u \neq 0$ for different values of $\beta$ in convex case.}\label{tab: Elliptical convex sparsity}
		\vspace{-10pt}
\begin{center}
  \begin{tabular}{|c|c|c|c|c|c|c|} \hline
   	$\beta$& $3\times 10^{-3}$ & $5\times 10^{-3}$& $10^{-2}$& $3\times 10^{-2}$& $5\times 10^{-2}$& $ 10^{-1}$ \\ \hline
    $m_{k} = \lceil 0.5 \times k^{1.1} \rceil$ & $100\%$ &$99.90\%$ &$95.32\%$ & $62.02\%$ &$34.44\%$ &$0.73\%$ \\
    $m_{k} = 1$ &$100\%$ &$100\%$ &$99.06\%$ &$72.01\%$ &$43.81\%$ &$2.19\%$ \\  \hline
  \end{tabular}
\end{center}
\end{table}

\begin{figure}[H]
	\vspace{-20pt}
	\centering
	\subfloat{
		\includegraphics[width=0.242\textwidth]{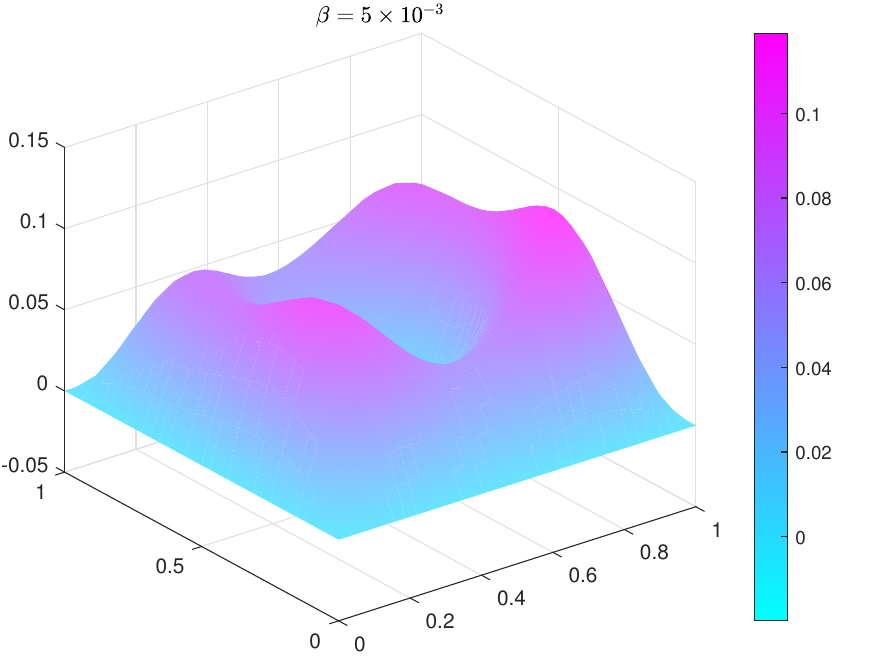}}
	\subfloat{
		\includegraphics[width=0.242\textwidth]{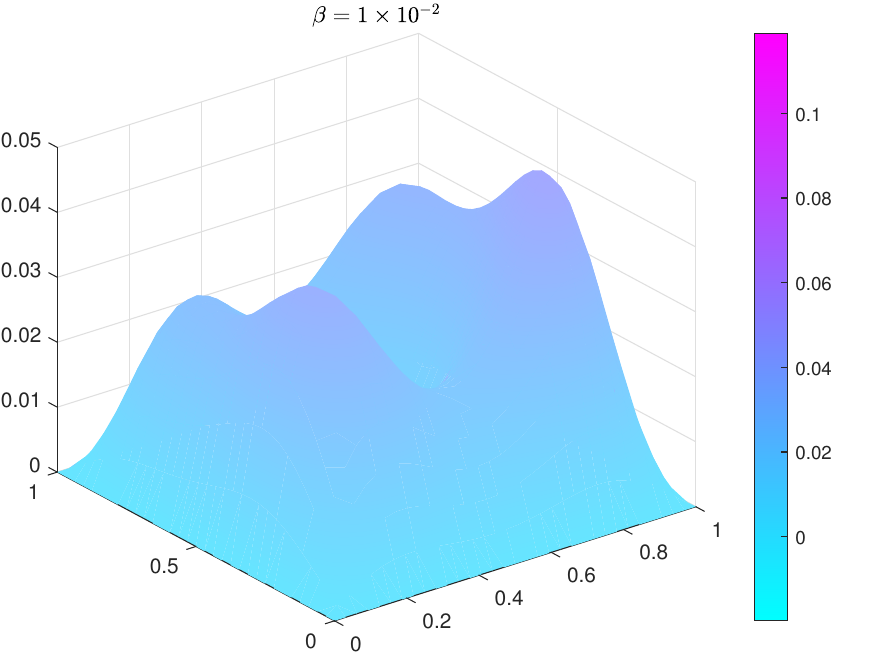}}
	\subfloat{
		\includegraphics[width=0.242\textwidth]{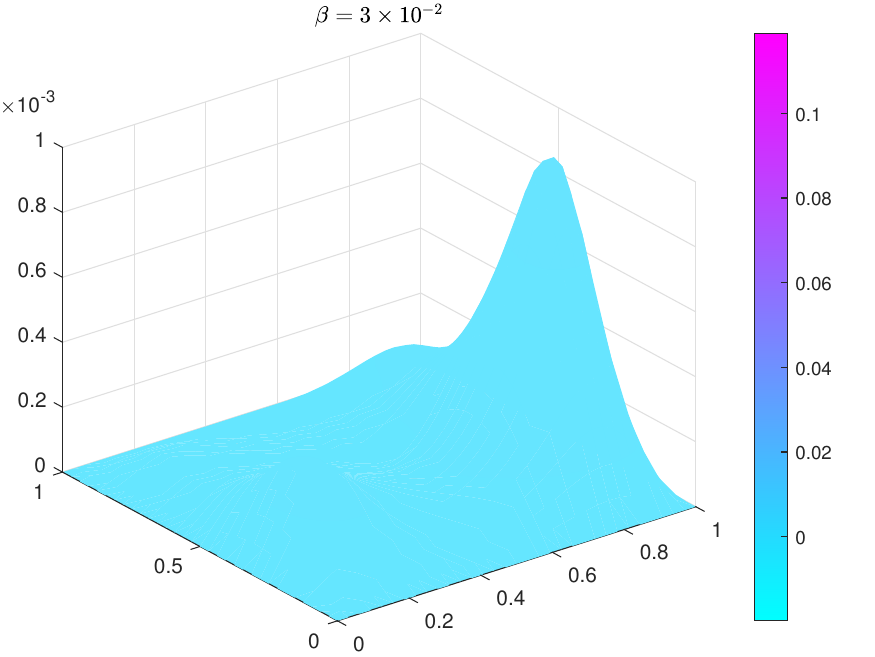}}
	\subfloat{
		\includegraphics[width=0.224\textwidth]{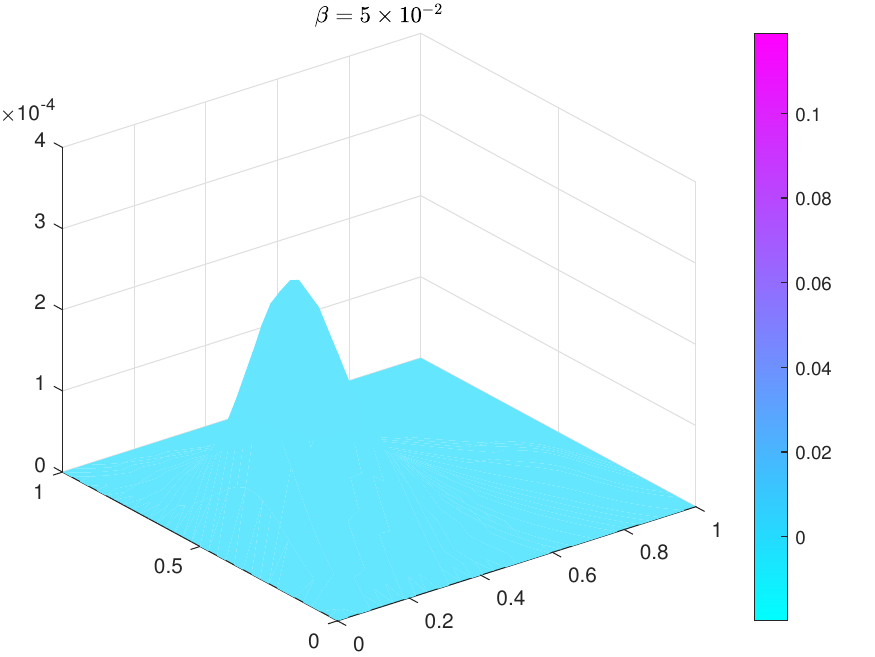}}
	\\
	\subfloat{
		\includegraphics[width=0.242\textwidth]{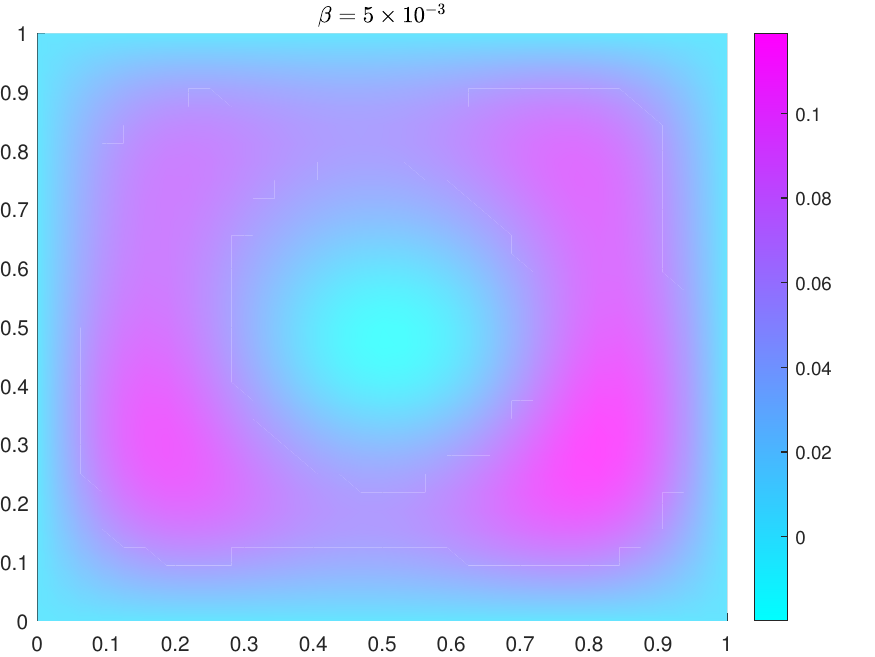}}
	\subfloat{
		\includegraphics[width=0.242\textwidth]{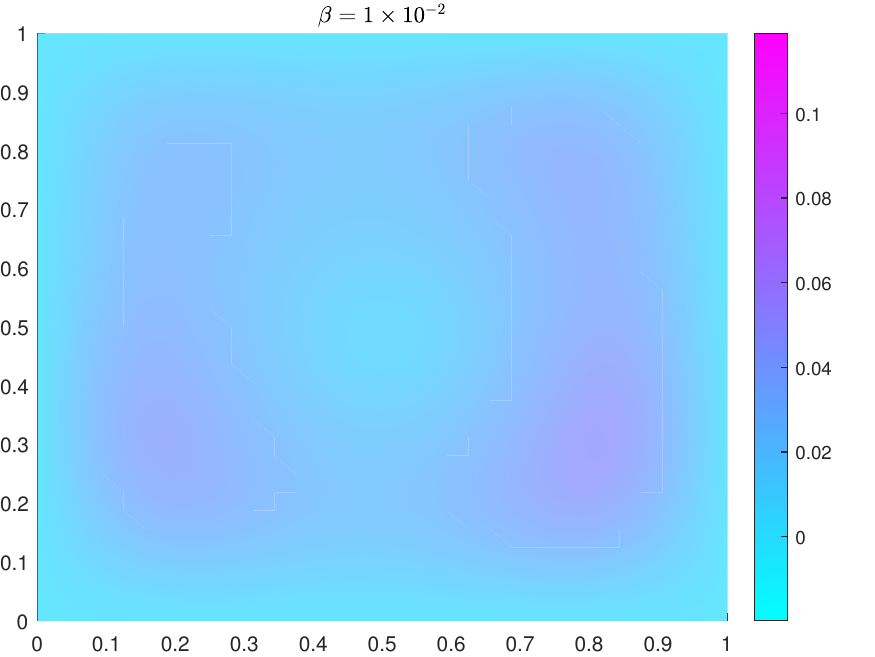}}
	\subfloat{
		\includegraphics[width=0.242\textwidth]{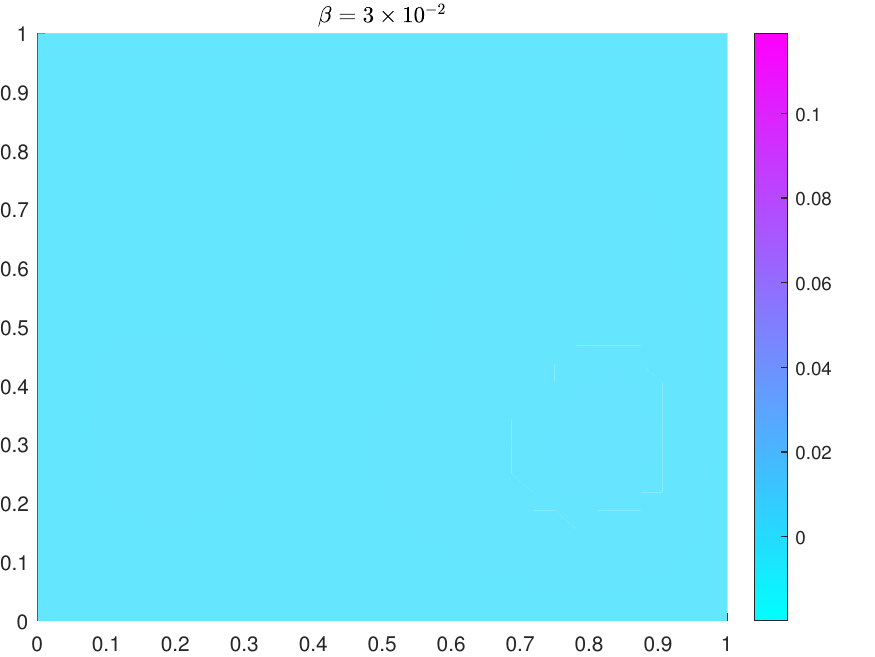}}
	\subfloat{
		\includegraphics[width=0.242\textwidth]{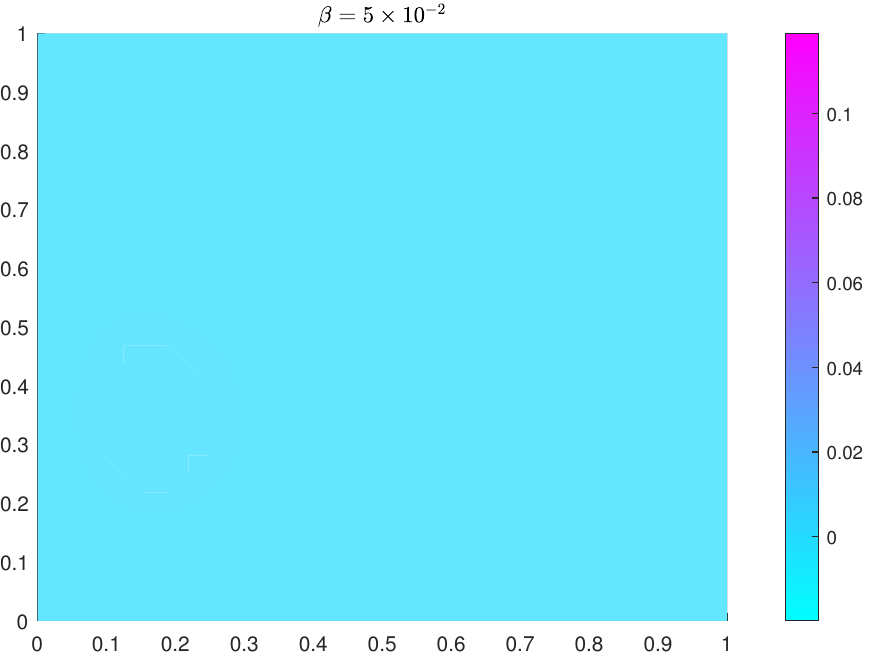}}
		\vspace{-15pt}
	\caption{State variables for several values of $\beta$ in convex case. Top row: principal view; bottom row: top view.}
	\label{fig: Ellipse convex sparse y}
\end{figure}
\begin{figure}[H]
	\vspace{-32pt}
	\centering
	\subfloat[$\beta=10^{-4}$]{
		\includegraphics[width=0.275\textwidth,height = 0.22\textwidth]{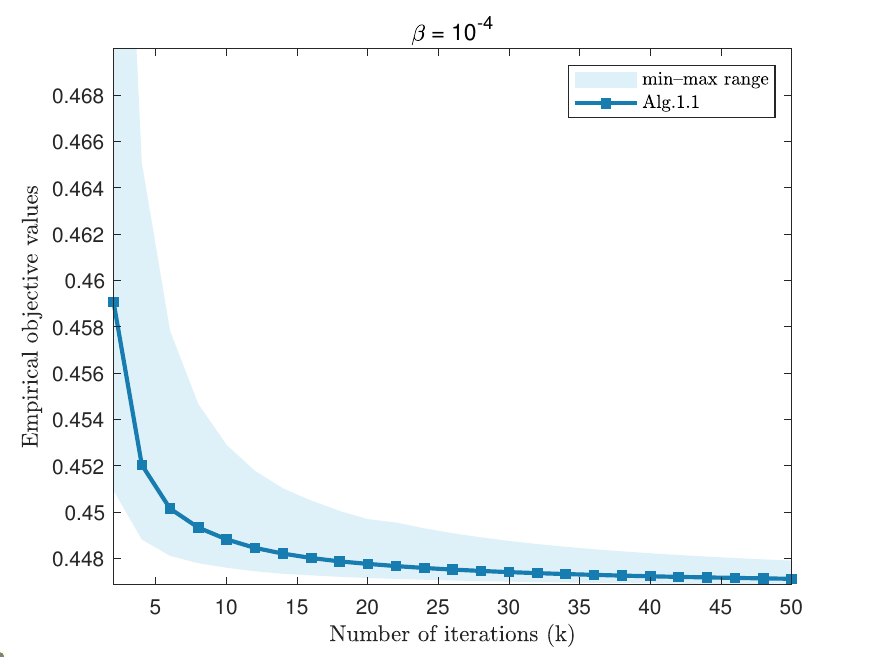}}
	\hspace{5em}
	\subfloat[$\beta=10^{-5}$]{
		\includegraphics[width=0.275\textwidth,height = 0.22\textwidth]{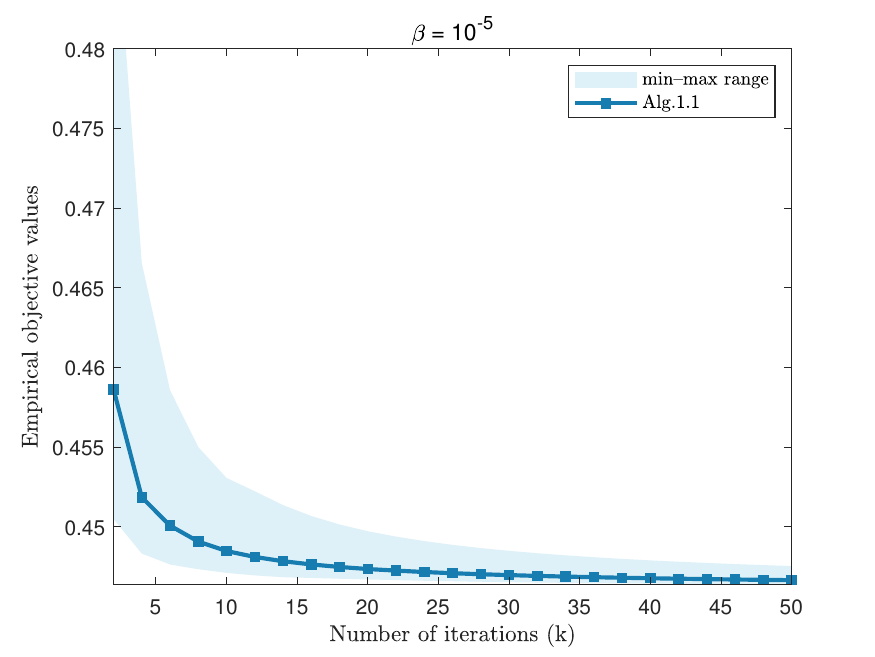}}
			\vspace{-7pt}
	\caption{High-probability convergence of Algorithm~\ref{alg:Stochastic Linearized ADMM} for different values of $\beta$ in general convex case}
	\label{fig: Ellipse convex highProb}
\end{figure}

\vspace{-10pt}
\section{Conclusions}
\label{sec:conclusions}
%
%Some conclusions here.
In this paper, we considered a class of stochastic composite convex optimization problems motivated by PDE-constrained optimization under uncertainty, and proposed a stochastic ADMM scheme based on stochastic approximation and linearization. 
At each iteration, the method decoupled the smooth and nonsmooth components of the objective, and both subproblems admitted closed-form solutions even with PDE-constraints. 
The nonergodic convergence results were analyzed for the strongly convex case and the general convex case, including faster sublinear convergence rates. Then, we focused on the application of our method to constrained optimal control problems governed by random PDEs, and assessed its efficiency via large-deviation probability estimates. Numerical experiments demonstrated that the proposed method is promising and competitive.

% \section*{Acknowledgments}

\bibliographystyle{siamplain}
\bibliography{references}
\end{document}

% --- supplement: ex_supplement.tex ---

\maketitle

\section{A detailed example}

Here we include some equations and theorem-like environments to show
how these are labeled in a supplement and can be referenced from the
main text.
Consider the following equation:
\begin{equation}
  \label{eq:suppa}
  a^2 + b^2 = c^2.
\end{equation}
You can also reference equations such as \cref{eq:matrices,eq:bb} 
from the main article in this supplement.

\lipsum[100-101]

\begin{theorem}
An example theorem.
\end{theorem}

\lipsum[102]
 
\begin{lemma}
An example lemma.
\end{lemma}

\lipsum[103-105]

Here is an example citation: \cite{KoMa14}.

\section[Proof of Thm]{Proof of \cref{thm:bigthm}}
\label{sec:proof}

\lipsum[106-112]

\section{Additional experimental results}
\Cref{tab:smfoo} shows additional
supporting evidence. 

\begin{table}[htbp]
\footnotesize
  \caption{Example table.}\label{tab:smfoo}
\begin{center}
  \begin{tabular}{|c|c|c|} \hline
   Species & \bf Mean & \bf Std.~Dev. \\ \hline
    1 & 3.4 & 1.2 \\
    2 & 5.4 & 0.6 \\ \hline
  \end{tabular}
\end{center}
\end{table}

\bibliographystyle{siamplain}
\bibliography{references}

%% file: ex_shared.tex
\usepackage{lipsum}
\usepackage{amsfonts}
\usepackage{graphicx}
\usepackage{epstopdf}
\usepackage{algorithmic}
\ifpdf
  \DeclareGraphicsExtensions{.eps,.pdf,.png,.jpg}
\else
  \DeclareGraphicsExtensions{.eps}
\fi

\newsiamremark{remark}{Remark}
\newsiamremark{hypothesis}{Hypothesis}
\crefname{hypothesis}{Hypothesis}{Hypotheses}
\newsiamthm{claim}{Claim}
\newsiamremark{fact}{Fact}
\crefname{fact}{Fact}{Facts}

\headers{Faster Stochastic ADMM}{Weihua Deng, Haiming Song, Hao Wang, and Jinda Yang}

\title{Faster Stochastic ADMM for Nonsmooth Composite Convex Optimization in Hilbert Spaces\thanks{Submitted to the editors \today.
}}

\author{Weihua Deng\thanks{School of Mathematics and Statistics, Lanzhou University, Lanzhou, Gansu, China
  (\email{dengwh@lzu.edu.cn}, \email{yangjinda@lzu.edu.cn}).}
\and Haiming Song\thanks{School of Mathematics, Jilin University, Changchun, Jilin, China 
  (\email{songhaiming@jlu.edu.cn}, \email{haowang23@mails.jlu.edu.cn}).}
\and Hao Wang\footnotemark[3]
\and Jinda Yang\footnotemark[2]}

\usepackage{amsopn}